\newtheorem{theorem}{Theorem}[section]
\newtheorem{lemma}[theorem]{Lemma}
\newtheorem{proposition}[theorem]{Proposition}
\newtheorem{corollary}[theorem]{Corollary}
\newtheorem{definition}[theorem]{Definition}
\theoremstyle{definition}
\theoremstyle{remark}
\def\cF{\mathcal{F}}
\def\mloc{M_{\rm loc}\,}
\def\mlock{M_{\rm loc}^{[k]}\,}
\def\mlockplusone{M_{\rm loc}^{[k+1]}\,}
\def\mlockminusone{M_{\rm loc}^{[k-1]}\,}
\def\mloczero{M_{\rm loc}^{[0]}\,}
\def\mlockzero{M_{\rm loc}^{[k_0]}\,}
\def\mloctwo{M_{\rm loc}^{[2]}\,}
\def\mloctwoplusk{M_{\rm loc}^{[2+k]}\,}
\title[Injective Envelopes and Local Multiplier Algebras, II]{Injective Envelopes and Local Multiplier Algebras of Some Spatial Continuous Trace C$^*$-algebras, II}
\author{Mart\'in Argerami}
\address{Department of Mathematics and Statistics, University of Regina,
Regina, Saskatchewan S4S 0A2, Canada}
\author{Douglas Farenick}
\address{Department of Mathematics and Statistics, University of Regina,
Regina, Saskatchewan S4S 0A2, Canada}
\author{Pedro Massey}
\address{Departamento de Matem\'atica, Facultad de Ciencias Exactas,  Universidad Nacional de La Plata
and Instituto Argentino de Matem\'atica--CONICET,
Buenos Aires, Argentina}
\subjclass[2000]{Primary 46L05; Secondary 46L07}
\keywords{local multiplier algebra, injective envelope, continuous trace C$^*$-algebra, continuous Hilbert bundle,
weakly continuous Hilbert bundle}
\begin{document}
\begin{abstract}
We determine the injective envelope and local multiplier algebra of a continuous trace
C$^*$-algebra $A$ that arises from a continuous Hilbert bundle over an arbitrary locally
compact Hausdorff space. In addition, we show that the second-order local multiplier
algebra $\mloctwo(A)$ of any such algebra $A$ is injective.
\end{abstract}
\maketitle
\section*{Introduction}
The injective envelope $I(A)$ of a C$^*$-algebra $A$ \cite{hamana1979a} provides a useful
ambient C$^*$-algebra in which one can analyse the multipliers of essential
ideals of $A$. In fact the local multiplier algebra $\mloc(A)$ of $A$
\cite{Ara--Mathieu-book} can be obtained from the injective envelope of $A$ by
considering the C$^*$-subalgebra of all $x\in I(A)$ for which $x$ is a norm-limit of a
sequence $x_n\in M(I_n)$ for various essential ideals $I_n$ of $A$
\cite{frank--paulsen2003}. However, $I(A)$ and $\mloc(A)$ are difficult to determine
precisely, even if one has extensive knowledge about $A$ itself. Indeed, on page 55 of \cite{blecher2007},
D.~Blecher writes, ``Thus the injective envelope is
mostly useful as an abstract tool because of the properties it possesses; one cannot hope
to concretely be able to say what it is.'' Anyone who has worked with injective envelopes
will find this comment completely understandable. Nevertheless, in this paper we
determine explicitly (Theorem \ref{inj env}) the injective envelope of a continuous trace C$^*$-algebra $A$ 
of the spatial type considered by Fell \cite{fell1961}.
We then use the embedding of the local multiplier algebra of
$A$ into its injective envelope to prove that the second-order local multiplier algebras
of such $A$ are injective (Theorem \ref{mloc-2}). An immediate consequence of this last result is
that the second-order local multiplier algebra of
$C_0(T)\otimes\mathbb K$ is injective for every locally compact Hausdorff space $T$, a fact which was known previously
to hold only under certain assumptions about the topology of $T$ \cite{ara-mathieu2010,somerset2000}.
The results of this paper complete 
a line of investigation
that started with \cite{argerami--farenick--massey2009} and was continued in \cite{argerami--farenick--massey2010}. 

In the case of an abelian C$^*$-algebra $A=C_0(T)$, to determine the local multiplier algebra and the injective envelope of $A$
one must pass from $T$ to a Stonean space
obtained from $T$ by performing an inverse limit  $\Delta=\displaystyle\lim_{\leftarrow}\beta U$
of Stone--\v Cech compactifications $\beta U$ of dense open subsets $U$ of $T$.
This passage from $T$ to a Stonean space $\Delta$ cannot be avoided if one aims to compute explicitly the enveloping C$^*$-algebras
$\mloc(A)$ and $I(A)$
in the case of arbitrary continuous trace C$^*$-algebras $A$ of the type studied by Fell.
In \cite{argerami--farenick--massey2010} we determined $\mloc(A)$ and $I(A)$ for those $A$
in which the spectrum $T=\hat A$ was assumed to be a Stonean space. In this paper we make the passage from an
arbitrary locally compact Hausdorff space $T$ to its projective hull
$\Delta=\displaystyle\lim_{\leftarrow}\beta U$. Doing so entails the determination of a new continuous Hilbert bundle over $\Delta$ which is obtained as
a direct limit of continuous Hilbert bundles over the compact spaces $\beta U$ for all dense open subsets $U$ of $T$.
Because the essential ideals of $A$ are parameterised by dense open subsets of $T$, this direct limit of
Hilbert bundles also induces a direct limit $A^\Delta$ of continuous trace C$^*$-algebras. 
It is through these limiting bundles and algebras that we obtain our main results.

The paper is organised as follows. In Section \S\ref{S:prelim} the algebras and
structures under study are introduced. The three subsequent sections treat the limiting
processes of Hilbert bundles and C$^*$-algebras that accompany the passage from
$T$ to $\Delta=\displaystyle\lim_{\leftarrow}\beta U$. In particular, in Section \S\ref{S:ideal}
we note the bijective correspondence between essential ideals $I$ of $A$ and dense
open subsets $X^I\subset T$ and represent each
essential ideal $I$ of $A$ as an essential ideal of a spatial continuous trace C$^*$-algebra
$A^I$ with spectrum $\beta X^I$. Section \S\ref{S:hb} constructs a continuous Hilbert bundle $\Omega^\Delta$
over the Stonean space $\Delta$ by way of a direct limit of Hilbert bundles $\Omega^I$ over
$\beta X^I$ affiliated with each essential ideal $I$ of $A$. In Section \S\ref{S:limit algs} we construct a direct
limit C$^*$-algebra $\displaystyle\lim_\rightarrow A^I$ and in Section \S\ref{sect:chain of inclusions} we show that
$A\subset\displaystyle\lim_\rightarrow A^I\subset \mloc(A)$. The main results concerning the
determination of the injective envelope of $A$ and the injectivity of the second-order local multiplier
algebra $\mloctwo(A)$ are given in Section \S\ref{S:main results}.

\section{Preliminaries}\label{S:prelim}

When referring to ideals of a C$^*$-algebra, we shall always mean ideals which are closed in the norm topology.
The term homomorphism is understood to be with respect to the category of C$^*$-algebras, meaning that
homomorphisms of C$^*$-algebras are $*$-homomorphisms, and are unital homomorphisms if the algebras involved are unital.

\subsubsection*{Essential ideals and local multiplier algebras}

Recall that an ideal $K$ of a C$^*$-algebra $A$ is an \emph{essential ideal} if
$K\cap J\neq\{0\}$ for every nonzero ideal $J$ of $A$.

Let  $\mathcal I_{\rm ess}(A)$ be the set of all essential ideals of $A$, which we consider
as a directed set under the partial order $ \preccurlyeq$ defined by
$ J \preccurlyeq I$ {if and only if} $I\subset J$.

For each $I\in \mathcal I_{\rm ess}(A)$, let $M(I)$ denote its {multiplier algebra}. If
$I,J\in \mathcal I_{\rm ess}(A)$ are such that
$I\subset J$, then there is a unique monomorphism
\begin{equation}\label{ma-incl}
\varrho_{JI}:M(J)\rightarrow M(I)\quad\mbox{such that}\quad
\iota_I=\varrho_{JI}\circ\iota_{J}{}_{\vert I}\,,
\end{equation}
where $\iota_K:K\rightarrow M(K)$
denotes the canonical embedding of $K$ into $M(K)$. Hence,
$(\mathcal I_{\rm ess}(A), \{M(I)\}_I, \{\varrho_{JI}\}_{J \preccurlyeq I})$ is a direct
system of C$^*$-algebras and monomorphisms, and the direct limit C$^*$-algebra
of this system is denoted by
\[
\mloc(A)\;=\;\lim_{\rightarrow}\,M(I)\,.
\]
The C$^*$-algebra $\mloc(A)$ is called the \emph{local multiplier algebra} of $A$.

One can consider the local multiplier algebra of $\mloc(A)$, and so forth, thereby yielding
higher order local multiplier algebras. So we write
\[
\mlock(A)\;=\;\mloc\left(\mlockminusone(A)\right) \,,\;\forall\,k\in\mathbb N\,,
\]
where $\mloczero(A)$ is taken to be $A$. Although
very little is known about the sequence $\{\mlock(A)\}_{k\in\mathbb N}$, it is known that the sequence becomes constant if for some $k_0$
the C$^*$-algebra $\mlockzero(A)$ is an AW$^*$-algebra---for in this case, $\mlock(A)=\mlockzero(A)$ for every $k\geq k_0$
\cite[Theorem 2.3.8]{Ara--Mathieu-book}. Only relatively recently has it been discovered
\cite{ara-mathieu2006,ara-mathieu2008,argerami--farenick--massey2009} that $\mloctwo(A)$ need not coincide with $\mloc(A)$,
and the reasons for this gap are just now starting to be understood \cite{ara-mathieu2010}.

\subsubsection*{Injective envelopes}

An \emph{injective C$^*$-algebra} is a unital C$^*$-algebra $C$
with the property that, for any triple $(B,D, \kappa)$ of unital C$^*$-algebras $B$, $D$
and unital completely isometric linear map $\kappa:B\rightarrow D$, every unital completely positive (ucp) linear
map $\phi:B\rightarrow C$ extends to a ucp
$\Phi:D\rightarrow C$ such that $\phi=\Phi\circ\kappa$ \cite[\S IV.2]{Blackadar-book}.
If $A$ is an arbitrary C$^*$-algebra, not necessarily unital, then an \emph{injective envelope} of $A$ is a pair $(C,\alpha)$
such that $C$ is an injective C$^*$-algebra, $\alpha:A\rightarrow C$ is a monomorphism which is assumed to be unital if $A$ is unital,
with the property that if $\tilde C$ is an injective C$^*$-algebra with $\alpha(A)\subset\tilde C\subset C$,
then $\tilde C=C$.
Every C$^*$-algebra has an injective envelope, and any two injective envelopes $(C,\alpha)$ and $(C_1,\alpha_1)$
of $A$ are
isomorphic by an isomorphism $\varphi:C\rightarrow C_1$ for which $\varphi\circ\alpha=\alpha_1$ \cite{hamana1979a}.

Thus, we may refer generically to ``the'' injective envelope of $A$, which we denote by $I(A)$.
The injective envelope of $A$ and the local multiplier algebras of $A$ are related by way of the
C$^*$-algebra inclusions
\begin{equation}\label{fp inclusion}
A\;\subset \;\mlock(A)\;\subset\; \mlockplusone(A)\;\subset\; I(A)\,,\;\forall\,k\in\mathbb N\,,
\end{equation}
where the inclusions are as unital C$^*$-subalgebras, except for the first inclusion if $A$ is nonunital.
These inclusions are uniquely determined by the inclusion (embedding) $\alpha:A\rightarrow I(A)$ of $A$ in $I(A)$.
More explicitly, $\mloc(A)$ is the closure in $I(A)$ of the union of all the idealizers in $I(A)$ of all essential
ideals of $A$
\cite{frank--paulsen2003}.

\subsubsection*{C$^*$-modules}

The Hilbert C$^*$-modules \cite[\S II.7]{Blackadar-book}
that we use are left modules $E$ over an abelian C$^*$-algebra $Z$.
Recall that $B(E)$ denotes the C$^*$-algebra of bounded, adjointable endomorphisms of $E$ and
$K(E)$ denotes the set of compact elements of $B(E)$---namely, the norm closure of the linear
space $\mathcal F(E)$ of all elements (called \emph{finite-rank} endomorphisms) obtained through finite
sums of endomorphisms of the form $\Theta_{\omega,\nu}$, where $\omega,\nu\in E$ and
$\Theta_{\omega,\nu}\xi\,=\,\langle\xi,\nu\rangle\cdot\omega$, for all $\xi\in E$. The pertinent facts
we require are: $K(E)$ is an essential ideal of $B(E)$ and $B(E)$ is the multiplier algebra of $K(E)$.
We will also use the fact that $\mathcal F(E)$ is a left $Z$-module via $f\cdot \Theta_{\omega,\nu}=\Theta_{f\cdot\omega,\nu}$,
for $f\in Z$.

\subsubsection*{Topology}

Throughout we shall assume that $T$ denotes a locally compact Hausdorff space.
As usual,
$C(T)$, $C_b(T)$, and $C_0(T)$ denote, respectively, the involutive algebras
of all continuous complex-valued functions on $T$, all bounded $f\in C(T)$, and all
$f\in C(T)$ that vanish at infinity respectively.

\subsubsection*{Vector and operator fields}

Assume that 
$(T, \{H_t\}_{t\in T})$ and $(T, \{B(H_t)\}_{t\in T})$ are fibred spaces 
where each $H_t$ is a Hilbert space.
A cross section of $(T, \{H_t\}_{t\in T})$ is a vector field
$\nu:T\rightarrow \bigsqcup_t\,H_t$ in which $\nu(t)\in H_t$,
for every $t\in T$.  Likewise, a cross section of 
$(T, \{B(H_t)\}_{t\in T})$ is an operator field
$x:T\rightarrow \bigsqcup_t\,B(H_t)$ such that $x(t)\in B(H_t)$,
for every $t\in T$.

For such cross sections $\nu$, $x$, we define functions $\check{\nu},\check{x}:T\rightarrow\mathbb R$ by
\[
\check{\nu}\,(t)\;=\;\|\nu(t)\|\,,\quad \check{x}\,(t)\;=\;\|x(t)\|\,.
\]
We say that $\nu$ is bounded if $\sup_{t\in T}\check{\nu}(t)\,<\,\infty$.
The boundedness of $x$ is defined analogously.

A {\em continuous Hilbert bundle} \cite{dixmier--douady1963} is a triple $(T,\{H_t\}_{t\in T},\Omega)$,
where
$\Omega$ is a set of vector fields on $T$ with fibres $H_t$
such that:
\begin{enumerate}
\item[{(I)}] $\Omega$ is a $C(T)$-module with the action $(f\cdot\omega)(t)=f(t)\omega(t)$;
\item[{(II)}] for each $t\in T$, $\{\omega(t):\ \omega\in \Omega\}=H_{t}$;
\item[{(III)}] $\check{\omega}\in C(T)$, for all $\omega\in\Omega$;
\item[{(IV)}] $\Omega$ is closed under local uniform approximation---that is, if $\xi:T\rightarrow
                \bigsqcup_t\,H_t$
                is any vector field such that for every $t_0\in T$ and $\varepsilon>0$ there is an
                open set $U\subset T$
                containing $t_0$ and a $\omega\in \Omega$ with $\|\omega(t)-\xi(t)\|<\varepsilon$
                for all $t\in U$, then necessarily $\xi\in\Omega$.
\end{enumerate}

Given a continuous Hilbert bundle $(T,\{H_t\}_{t\in T},\Omega)$, let
\begin{equation}\label{def Omega_0_b }
\Omega_b\;=\;\{\omega\in\Omega\,:\,\check{\omega}\in C_b(T)\} \quad\mbox{and}\quad
\Omega_0\;=\;\{\omega\in\Omega\,:\,\check{\omega}\in C_0(T)\} \,.
\end{equation}

It is easy to see that $\Omega_b$ and $\Omega_0$ are Hilbert C$^*$-modules over
$C_b(T)$ and $C_0(T)$ respectively, where the inner product $\langle\omega_1,\omega_2\rangle$
of $\omega_1,\omega_2\in\Omega$ is the continuous function
\[
\langle\omega_1,\omega_2\rangle\,(t)\;=\;\langle\omega_1(t),\omega_2(t)\rangle\,,\;t\in T\,.
\]

\subsubsection*{Spatial continuous trace C$^*$-algebras}

We now describe the class of C$^*$-algebras of interest in this paper.

Assume that $(T,\{H_t\}_{t\in T},\Omega)$
is a continuous Hilbert bundle.
An operator field $a$ is
\emph{almost finite-dimensional} with respect to this bundle
 if for each $t_0\in T$ and $\varepsilon>0$
       there exist an open set $U\subset T$ containing $t_0$
       and $\omega_1,\dots, \omega_n\in \Omega$ such that
          \begin{enumerate}
            \item[{(a)}] $\omega_1(t),\dots,\omega_n(t)$ are linearly independent for every $t\in U$, and
             \item[{(b)}] $\|p_ta(t)p_t-a(t)\|<\varepsilon$ for all $t\in U$, where $p_t\in B(H_t)$ is the
                         projection with range $\mbox{\rm Span}\,\{\omega_j(t)\,:\,1\le j\le n\}$.
          \end{enumerate}
 Moreover, $a$ is  \emph{weakly continuous} if the complex-valued function
\[
t\;\mapsto\;\langle a(t)\omega_1(t), \omega_2(t)\rangle
\]
is continuous for every $\omega_1,\omega_2\in\Omega$.

We denote by
$A=A(T,\{H_t\}_{t\in T},\Omega)$
the C$^*$-algebra,
with respect to pointwise operations and norm $\|a\|=\max\{\|a(t)\|\,:\,t\in T\}$, of
all weakly continuous almost finite-dimensional operator fields $a$ for which $\check{a}\in C_0(T)$.
Such C$^*$-algebras $A$
were studied by Fell \cite{fell1961}, and he proved that each such $A$
is a continuous trace C$^*$-algebra with spectrum $\hat A\simeq T$ \cite[Theorems 4.4, 4.5]{fell1961}.
We call the algebra $A$ the \emph{Fell}, or the \emph{spatial},
continuous trace C$^*$-algebra associated with the Hilbert bundle
$(T,\{H_t\}_{t\in T},\Omega)$.

\section{Extended Representations of Essential Ideals }\label{S:ideal}

 Let  $(T,\{H_t\}_{t\in T},\Omega)$ be a continuous Hilbert bundle over a locally compact Hausdorff space $T$.
Suppose that $I$ is an arbitrary ideal of $A\;=\;A(T,\{H_t\}_{t\in T},\Omega)$.
In this section we shall construct a continuous Hilbert bundle $(\beta X^I, \{H_t^I\}_{t\in \beta X^I},\Omega^{I})$
over the Stone--\v Cech compactification $\beta X^I$ of $X^I$.
Moreover, if we let $A^I$ be the Fell continuous C$^*$-algebra associated with this bundle we shall show that $I$ embeds into $A^I$ as an essential ideal.

Let $Z^I\subset T$ denote the closed set
\[
Z^I\;=\;\{t\in T\,:\,b(t)=0\,,\;\forall\,b\in I\}\,,
\]
and let $X^I$ be the open set $X^I=T\setminus X^I$. The open set $X^I$
is homeomorphic to both the primitive ideal space $\mbox{Prim}\, I $ and to the spectrum  $\hat I$
of $I$ \cite[Proposition A.27]{Raeburn--Williams-book}. Moreover,
$I$ is an essential ideal of $A$ if and only if $X^I$ is dense in $T$.

Recall that $\Omega_b\;=\;\{\omega\in\Omega\,: \check{\omega}\in C_b(T)\}$ is a $C_b(T)$-module, and define a normed vector space
$\Omega_b|_{X^I}$ of bounded restricted
vector fields by
 \begin{equation}\label{bounded restriction}
\Omega_b|_{X^I}\;=\;\{\omega|_{X^I}\,:\, \omega\in \Omega_b\}\,.
\end{equation}
For any pair $\omega,\nu\in \Omega_b|_{X^I}$, let
$\phi_{\omega,\nu}^I:X^I\rightarrow \mathbb C$
be given by
\[
\phi_{\omega,\nu}^I(t)\;=\;\langle \omega(t), \nu(t)\rangle\,,\quad t\in X^I\,.
\]
This map is continuous and bounded, and so $\phi_{\omega,\nu}^I$ extends to a unique continuous map
$\tilde\phi_{\omega,\nu}^I:\beta X^I\rightarrow\mathbb C$. By uniqueness of this continuous extension,
the form $\langle\cdot,\cdot\rangle_t^I$ on $\Omega_b|_{X^I}$ defined by
\[
\langle\omega,\nu \rangle_t^I\;=\;\tilde\phi_{\omega,\nu}^I(t)\,,\quad t\in\beta X^I\,,
\]
is a pre-inner product on $\Omega_b|_{X^I}$ for each $t\in\beta X^I$.
Let $H_t^I$ denote the Hilbert space completion of $\Omega_b|_{X^I}/\mathcal N_t^I$, where
\[
\mathcal N_t^I=\{\omega\in \Omega_b|_{X^I}\,:\,\tilde\phi_{\omega,\omega}^I(t)=0\}\;.
\]
If $\overline \omega^I(t)$ denotes the equivalence class of $\omega\in \Omega_b|_{X^I}$ in $H_t^I$, then
for $t\in X^I$ the map $\overline \omega^I(t)\mapsto \omega (t)$ is
well defined and is an isometric isomorphism from $\Omega_b|_{X^I}/\mathcal N_t^I$ onto $H_t$.
Thus, we shall identify $H_t^I=H_t$ for every $t\in X^I$ so that, under this identification, we have $\overline \omega^I(t)=\omega(t)$. Hence, for every $\omega\in\Omega_b$ we have a bounded vector field
\[
\overline\omega^I:\beta X^I \rightarrow \bigsqcup_{t\in \beta X^I}\,H_t^I
\]
We shall consider
\begin{equation}\label{defi ei}
\mathcal E^I\;=\;\{\overline\omega^I\,:\,\omega\in\Omega_b|_{X^I}\}\,,
\end{equation}
which is a vector space of bounded vector fields for which $t\mapsto\|\overline\omega^I(t)\|$ is continuous on $\beta X^I$.

\begin{definition}
Let $\Omega^I$ denote the set of all vector fields $\nu:\beta X^I \rightarrow \bigsqcup_{t\in \beta X^I}\,H_t^I$
with the property that for every $t_0\in\beta X^I$ and $\varepsilon>0$ there is an open set $U\subset\beta X^I$ containing
$t_0$ and a vector field $\overline\omega^I\in \mathcal E^I$ such that
$\|\nu(t)-\overline\omega^I(t)\|<\varepsilon$ for every $t\in U$.
\end{definition}

We shall say that each $\nu\in\Omega^I$, as defined above, is a \emph{local uniform limit} of vector fields in $\mathcal E^I$.

\begin{proposition}\label{prop: ext chb}
$(\beta X^I, \{H_t^I\}_{t\in \beta X^I},\Omega^{I})$ is a continuous Hilbert bundle.
\end{proposition}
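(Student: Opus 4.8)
The plan is to verify the four axioms (I)--(IV) in the definition of a continuous Hilbert bundle for the triple $(\beta X^I, \{H_t^I\}_{t\in \beta X^I},\Omega^{I})$, in roughly increasing order of difficulty. Axiom (IV) (closure under local uniform approximation) is essentially immediate from the definition of $\Omega^I$ together with an $\varepsilon/2$ argument: if $\xi$ is locally uniformly approximable by elements of $\Omega^I$, and each such element is in turn locally uniformly approximable by vector fields in $\mathcal E^I$, then $\xi$ itself is locally uniformly approximable by $\mathcal E^I$, hence lies in $\Omega^I$. For axiom (III) I would first show that each $\overline\omega^I \in \mathcal E^I$ has $t\mapsto \|\overline\omega^I(t)\|$ continuous on $\beta X^I$ (this is already recorded in the discussion preceding the definition, since $\|\overline\omega^I(t)\|^2 = \tilde\phi^I_{\omega,\omega}(t)$ is a continuous extension of a bounded continuous function), and then that a local uniform limit of such functions is continuous, using the reverse triangle inequality $\big|\,\|\nu(t)\| - \|\overline\omega^I(t)\|\,\big| \le \|\nu(t)-\overline\omega^I(t)\|$ together with a standard $\varepsilon/3$ patching argument over the open cover of $\beta X^I$.

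For axiom (I), the $C(\beta X^I)$-module structure: I would check that $\Omega^I$ is closed under pointwise addition (an $\varepsilon/2$ argument combining two local approximations on the intersection of the two open sets) and under multiplication by $f \in C(\beta X^I)$. For the latter, given $\nu \in \Omega^I$, $f \in C(\beta X^I)$, $t_0$, and $\varepsilon$, one picks $\overline\omega^I \in \mathcal E^I$ approximating $\nu$ to within $\varepsilon/(\|f\|+1)$ on an open $U \ni t_0$, shrinks $U$ further using continuity of $f$ so that $|f(t)-f(t_0)|$ is small on $U$, and then observes that $f(t_0)\cdot\overline\omega^I = \overline{(g\cdot\omega)}^I$ for a suitable $g\in C_b(T)$ restricting appropriately — here one uses that $\Omega_b$ is a $C_b(T)$-module and that constants extend, so $\mathcal E^I$ absorbs scalars and we can combine $|f(t)-f(t_0)|\,\|\overline\omega^I(t)\| + \|f(t_0)\|\,\|\nu(t)-\overline\omega^I(t)\|$ into a bound below $\varepsilon$; alternatively, one shows directly that $t\mapsto \tilde f(t)\nu(t)$ is a local uniform limit of $\mathcal E^I$-fields where $\tilde f$ restricted to $X^I$ comes from $C_b(T)$ via the universal property of $\beta X^I$.

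Axiom (II) — fullness of the fibres, $\{\nu(t)\,:\,\nu\in\Omega^I\} = H_t^I$ for every $t\in\beta X^I$ — is the step I expect to be the main obstacle, and it splits into two cases. For $t \in X^I$ we identified $H_t^I = H_t$ and $\overline\omega^I(t) = \omega(t)$; since $\Omega_b|_{X^I}$ already surjects onto $H_t$ (as $\Omega$ is full for the original bundle over $T$ and boundedness can be arranged by truncation near $t$), fullness holds. For $t \in \beta X^I \setminus X^I$, by construction $H_t^I$ is the Hilbert space completion of $\Omega_b|_{X^I}/\mathcal N_t^I$, so the classes $\{\overline\omega^I(t) : \omega \in \Omega_b|_{X^I}\}$ are dense in $H_t^I$ but need not exhaust it; here one must use axiom (IV)/the definition of $\Omega^I$ to fill in the gap — given any $h \in H_t^I$, choose $\overline{\omega_n}^I$ with $\|\overline{\omega_n}^I(t) - h\| \to 0$, and assemble a single vector field $\nu$ which near $t$ agrees (in a local-uniform sense) with the $\overline{\omega_n}^I$ closely enough to be in $\Omega^I$ while satisfying $\nu(t) = h$. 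The care needed is to make this construction yield a genuine local uniform limit of $\mathcal E^I$, which requires choosing a neighbourhood basis at $t$ and passing to the tail appropriately; this patching is where the real work lies, and I would lean on the Stonean/extremally-disconnected-adjacent structure and the continuity of the norms established in axiom (III) to control the approximation on overlaps.
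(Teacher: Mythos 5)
Your handling of axioms (I), (III), and (IV) matches the paper's (the paper dispatches (III) and (IV) in one line and gives exactly your approximation argument for the module property), but there is a genuine gap at axiom (II), precisely at the point you yourself flag as ``where the real work lies.'' For $t\in\beta X^I\setminus X^I$ you propose to take $\overline{\omega_n}^I$ with $\overline{\omega_n}^I(t)\to h$ and ``assemble a single vector field $\nu$ which near $t$ agrees (in a local-uniform sense) with the $\overline{\omega_n}^I$'' -- but you never say how to define $\nu$ pointwise, and the crutch you reach for (``the Stonean/extremally-disconnected-adjacent structure'') is not available: $\beta X^I$ is merely the Stone--\v Cech compactification of an open subset of $T$ and is in general not extremally disconnected (only the inverse limit $\Delta$ constructed later is Stonean), nor is any such structure needed. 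As written, the fullness of the fibres over the corona is asserted, not proved.

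The paper closes this exact gap by citing Dixmier--Douady: in the presence of axioms (I), (III) and (IV), axiom (II) may be replaced by the a priori weaker requirement that $\{\nu(t):\nu\in\Omega^I\}$ be \emph{dense} in $H_t^I$ for each $t$, and density holds trivially here because $H_t^I$ is by construction the completion of $\Omega_b|_{X^I}/\mathcal N_t^I$, whose image is $\{\overline\omega^I(t):\overline\omega^I\in\mathcal E^I\}$. If you prefer not to quote that reduction, the standard way to make your sketch rigorous is the telescoping-series argument that proves it: pick $\omega_n\in\mathcal E^I$ with $\|\omega_n(t)-h\|<2^{-n}$, use axiom (III) to find neighbourhoods $V_n\ni t$ on which $\|\omega_{n+1}(s)-\omega_n(s)\|<2^{-n+2}$, choose Urysohn functions $f_n\in C(\beta X^I)$ with $f_n(t)=1$, $0\le f_n\le 1$, $\supp f_n\subset V_n$, and set $\nu=\omega_1+\sum_n f_n\cdot(\omega_{n+1}-\omega_n)$; the series converges uniformly, so $\nu\in\Omega^I$ by axioms (I) and (IV), and $\nu(t)=\lim_n\omega_n(t)=h$. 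Either route fixes the proof; without one of them, axiom (II) is not established at the points of $\beta X^I\setminus X^I$.
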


\begin{proof}
Because each $\nu\in\Omega^I$ is a local uniform limit of vector fields in $\mathcal E^I$, axiom (III)
on the continuity of the map $\check{\nu}$
and axiom (IV) on the closure of $\Omega^I$ under local uniform limits are easily verified.

In order to prove axiom (I),
let $f\in C(\beta X^I)$ and let $\nu\in \Omega^I$,
and consider the bounded vector field $f\cdot\nu$ defined by $f\cdot\nu(t)=f(t)\nu(t)$, $t\in\beta X^I$. Assume $t_0\in\beta X^I$ and let $\varepsilon>0$
be given. By the continuity of $f$ and the definition of $\Omega^I$, there are
an open neighbourhood $U$ of $t_0$ in $\beta X^I$ and a $\overline\eta^I\in\mathcal E^I$
such that, for all $t\in U$,
$|f(t)-f(t_0)|<\epsilon/2$ and $\|\nu(t)-\overline \eta^I(t)\|< \epsilon/2$. Therefore,
\[
\|f\cdot\nu(t)-f(t_0)\, \overline \eta^I(t)\|\;<\;  \epsilon (\|\nu\|+\|f\|)\,,\quad\forall\,t\in U\,.
\]
Thus, $f\cdot \nu$ is a local uniform limit of vector fields in $\mathcal E^I$---hence, an element of $\Omega^I$.
This proves that $\Omega^I$ is a $C(\beta X^I)$-module under the pointwise action.

That leaves axiom (II).
However, in the presence of axioms (I), (III), and (IV), the axiom (II)
is equivalent to the axiom that $\{\nu(t)\,:\,\nu\in\Omega^I\}$ be dense in $H_{t}^I$, for each $t\in \beta X^I$ \cite{dixmier--douady1963}. This
seemingly weaker axiom is satisfied by $\Omega^I$ because
$\{\overline\omega^I(t)\,:\,\overline\omega^I\in\mathcal E^I\}$ is dense in $H_{t}^I$ for each $t\in \beta X^I$.
\end{proof}

\begin{definition} If $I$ is an ideal of $A$, we write
$A^I=A(\beta X^I, \{H_t^I\}_{t\in \beta X^I},\Omega^{I})$ for the spatial continuous trace C$^*$-algebra
associated with the continuous Hilbert bundle $(\beta X^I, \{H_t^I\}_{t\in \beta X^I},\Omega^{I})$
(see the last paragraph of section \ref{S:prelim}).
\end{definition}

\vskip  4pt
\noindent{\bf Notational Convention.} Assume that $U$ is an open subset of $T$ an let $f\in C(T)$. We shall write that $f\in C_0(U)$ 
whenever $f$ is an element  of the ideal $J=\{g\in C(T)\,:\,g(t)=0,\;\forall\,t\in T\setminus U\}$. Conversely, note that every $h\in C_0(U)$ extends to a continuous function $h:T\rightarrow \mathbb C$
by defining $h(t)=0$ for $t\in T\setminus U$. Thus, we shall sometimes consider $h$ as an element of $C_b(T)$.

\begin{lemma}\label{ess ideal mem} Let $I$ be an
essential ideal of $A$ and
suppose that $a\in A$. Then $a\in I$ if and only if $\check{a}\in C_0(X^I)$.
\end{lemma}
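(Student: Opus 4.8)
The plan is to exploit the characterisation of the ideal $I$ through its zero set $Z^I$ together with the identification, recorded in Section \S\ref{S:prelim}, that $X^I$ is homeomorphic to $\hat I$. The one direction is essentially immediate: if $a \in I$, then $a(t) = 0$ for every $t \in Z^I$ by the very definition of $Z^I$, so $\supp \check a \subset X^I$; since $\check a \in C_0(T)$ already (as $a \in A$), this gives $\check a \in C_0(X^I)$ in the sense of the Notational Convention, because a function in $C_0(T)$ that vanishes off an open set $U$ lies in the ideal of $C(T)$ determined by $U$.

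The substantive direction is the converse. Suppose $a \in A$ with $\check a \in C_0(X^I)$; I must produce $a$ inside $I$. First I would reduce to showing that $a$ is approximated in norm by elements of $I$, since $I$ is norm-closed. The natural device is an approximate unit: because $I$ is an ideal of the continuous trace C$^*$-algebra $A$ with spectrum $X^I$, one can build, for each compact $K \subset X^I$, an element $e \in I$ (or at least a positive element of norm $\le 1$) that acts as a local near-identity on the portion of $a$ supported over $K$. Concretely, given $\varepsilon > 0$, the set $K = \{t \in T : \check a(t) \ge \varepsilon\}$ is compact and contained in the open set $X^I$ (here is where the hypothesis $\check a \in C_0(X^I)$ is used); by local compactness and the almost-finite-dimensional structure of $A$ one finds $b \in I$ with $b(t)$ acting as (a near) identity on the range-relevant subspace of $a(t)$ for all $t \in K$. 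Then $\|ba - a\| < \varepsilon$ uniformly: over $K$ because $b(t)a(t) \approx a(t)$, and over $T \setminus K$ because there $\|a(t)\| < \varepsilon$ already and $\|b\| \le 1$. Since $ba \in I$ and $\varepsilon$ is arbitrary, $a \in \overline I = I$.

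The main obstacle is making the construction of the local near-identity $b \in I$ rigorous using only the structural axioms available: one must patch together, via a partition of unity on the compact set $K$, the finitely many local frames $\omega_1,\dots,\omega_n \in \Omega$ guaranteed by the almost-finite-dimensionality of $a$, check that the resulting operator field lies in $A$ (weak continuity and the $C_0$ condition), and verify it actually belongs to $I$ rather than merely to $A$ — which is where one again invokes that its support sits inside $X^I$, together with the forward direction already proved. An alternative, cleaner route avoiding an explicit construction is to use that $I$ is, by \cite[Proposition A.27]{Raeburn--Williams-book}, precisely the set of $a \in A$ vanishing on $Z^I = T \setminus X^I$; then the statement reduces to the purely topological fact that for $a \in C_0(T)$-valued fields, vanishing on $Z^I$ is equivalent to $\check a \in C_0(X^I)$, which is the standard identification of ideals of $C_0(T)$ with open subsets. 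I would present the argument via this second route if the cited proposition is available in the form needed, and fall back on the approximate-unit construction otherwise.
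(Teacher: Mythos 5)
Your proposal is correct in substance, and your preferred (second) route is essentially the paper's argument in different clothing, so a comparison is worthwhile. The paper's proof reduces everything to Fell's pointwise criterion (\cite[Lemma 1.8]{fell1961}): $a\in I$ if and only if $a(t)\in I_t=\{b(t):b\in I\}$ for every $t\in T$; since each fibre $A_t=K(H_t)$ is simple (\cite[Theorem 4.4]{fell1961}) and $I_t$ is an ideal of $A_t$, one gets $I_t=\{0\}$ on $Z^I$ and $I_t=A_t$ on $X^I$, whence $a\in I$ exactly when $a$ vanishes on $Z^I$, i.e.\ $\check{a}\in C_0(X^I)$. Your second route reaches the same intermediate statement, $I=\{a\in A:\,a(t)=0\ \forall\,t\in Z^I\}$, via the ideal/open-set correspondence; the only imprecision is the citation: \cite[Proposition A.27]{Raeburn--Williams-book} as invoked in the paper yields the homeomorphism $\hat I\simeq X^I$, not the membership criterion itself. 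What you actually need is the standard fact that a closed ideal is the intersection of the primitive ideals containing it, combined with the identification $\mathrm{Prim}\,A\simeq T$ through the point evaluations (which again rests on $A_t=K(H_t)$ being simple); with those two inputs your argument closes, and it buys nothing beyond what Fell's Lemma 1.8 gives in one line. Your fallback approximate-unit construction would also work but, as sketched, hides the same fibrewise input: to produce $b\in I$ (and not merely $b\in A$) acting as a near-identity on the relevant subspaces of $a(t)$ over the compact set $K=\{\check{a}\geq\varepsilon\}\subset X^I$, you must know that $\overline{I_t}=K(H_t)$ for $t\in X^I$, which is precisely the simplicity argument; granting that, the partition-of-unity patching you outline goes through, at the cost of being considerably longer than the paper's pointwise argument.
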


\begin{proof} For each $t\in T$ let $A_t=\{a(t)\,:\,a\in A\}$; by \cite[Theorem 4.4]{fell1961},
$A_t=K(H_t)$, the simple C$^*$-algebra of compact operators acting on $H_t$. Next,
let $I_t=\{b(t)\,:\,b\in I\}\subset A_t$. By \cite[Lemma 1.8]{fell1961}, if
$a\in A$, then  $a\in I$ if and only if $a\in I_t$ for all $t\in T$ . Because $I_t$ is an ideal of $A_t$,
we conclude that
$I_t=\{0\}$ for $t\in Z^I$
and $I_t=A_t$ for $t\in X^I$.
Hence, a necessary and sufficient condition for $a$ to belong to $I$ is that
$a(t)=0$ for all $t\in Z^I$. That is,
$a\in I$ if and only if $\check{a}\in C_0(X^I)$.
\end{proof}

\begin{proposition}\label{essen emb} There exists a monomorphism $\delta_I:I\rightarrow A^I$ such that
\begin{enumerate}
\item $\delta_I(I)$ is an essential ideal of $A^I$,
\item $\delta_I(a)\,(t)\,=\,a(t)$, for all $a\in I$ and $t\in X^I$, and
\item $\delta_I(a)\,(t)\,=\,0$, for all $a\in I$ and $t\in \beta X^I\setminus X^I$
\end{enumerate}
\end{proposition}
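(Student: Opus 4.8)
The plan is to define $\delta_I$ directly on the level of operator fields and then verify it has the required properties. Given $a \in I \subset A$, I would set $\delta_I(a)(t) = a(t)$ for $t \in X^I$ (using the identification $H_t^I = H_t$ established before Proposition \ref{prop: ext chb}) and $\delta_I(a)(t) = 0$ for $t \in \beta X^I \setminus X^I$. Properties (2) and (3) then hold by construction, so the real work is to show that $\delta_I(a)$ is a legitimate element of $A^I$ — that is, a weakly continuous, almost finite-dimensional operator field with $\delta_I(a)\,\check{}\, \in C_0(\beta X^I)$ — and that $\delta_I$ is an isometric $*$-homomorphism whose image is an essential ideal.

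First I would check that $\delta_I(a)$ is well defined as an operator field on $\beta X^I$: for $t \in X^I$ we have $a(t) \in K(H_t) = K(H_t^I)$, and for the boundary points the value is the zero operator, which lies in $B(H_t^I)$. Next, the norm function: by Lemma \ref{ess ideal mem}, $\check{a} \in C_0(X^I)$, and since $\delta_I(a)\,\check{}$ agrees with $\check a$ on $X^I$ and vanishes on the boundary, I would argue that $\delta_I(a)\,\check{}$ is exactly the continuous extension of $\check a \in C_0(X^I)$ to $\beta X^I$ obtained by the Notational Convention (viewing $C_0(X^I)$ inside $C_b(T)$, hence extending to $\beta X^I$), so $\delta_I(a)\,\check{} \in C_0(\beta X^I)$. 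This also immediately gives $\|\delta_I(a)\| = \|a\|$, so $\delta_I$ is isometric, and it is clearly linear and multiplicative and $*$-preserving since all operations are pointwise and the map is just "restrict to $X^I$, extend by zero". The injectivity follows from isometry (or from density of $X^I$ in $\beta X^I$).

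The main obstacle — and the step I would spend the most care on — is verifying that $\delta_I(a)$ is almost finite-dimensional and weakly continuous with respect to the bundle $\Omega^I$, particularly near the boundary points $t_0 \in \beta X^I \setminus X^I$. For weak continuity, given $\overline{\omega}^I, \overline{\nu}^I \in \mathcal{E}^I$ with $\omega,\nu \in \Omega_b|_{X^I}$, the function $t \mapsto \langle \delta_I(a)(t)\overline{\omega}^I(t), \overline{\nu}^I(t)\rangle$ equals $t \mapsto \langle a(t)\omega(t), \nu(t)\rangle$ on $X^I$; this is continuous and bounded on $X^I$ (using weak continuity of $a$ and boundedness of $\omega,\nu$) hence extends continuously to $\beta X^I$, and since $\delta_I(a)$ vanishes on the boundary while the extension of a $C_0(X^I)$-type function also vanishes there, the two continuous functions agree on all of $\beta X^I$; a local-uniform-approximation argument then passes this from $\mathcal{E}^I$ to all of $\Omega^I$. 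For the almost finite-dimensional condition, at an interior point $t_0 \in X^I$ one inherits the local approximating frame $\omega_1,\dots,\omega_n$ from $a \in A$ (shrinking the open set to lie inside $X^I$ and passing to $\overline{\omega}_j^I$), while at a boundary point $t_0 \notin X^I$ one uses that $\check{a}$ is small on a neighbourhood of $t_0$ (since its continuous extension vanishes at $t_0$) together with the convention that one may take $n = 1$ with any $\omega_1$ with $\omega_1(t_0) \neq 0$, making $\|p_t \delta_I(a)(t) p_t - \delta_I(a)(t)\| \le 2\|\delta_I(a)(t)\| < \varepsilon$ there.

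Finally, for essentiality of $\delta_I(I)$ in $A^I$: I would invoke the fact (used in Lemma \ref{ess ideal mem}, via \cite[Lemma 1.8]{fell1961}) that an ideal of $A^I$ is determined fibrewise, together with $X^I$ being dense in $\beta X^I$; concretely, $\delta_I(I)$ restricted to the fibre over any $t \in X^I$ is all of $K(H_t^I)$ (since $I_t = A_t = K(H_t)$ for $t \in X^I$ by Lemma \ref{ess ideal mem}'s proof), so the vanishing set of $\delta_I(I)$ in $\beta X^I$ is contained in the nowhere-dense set $\beta X^I \setminus X^I$, and an ideal of a continuous trace $C^*$-algebra with nowhere-dense (hence non-open) vanishing set must meet every nonzero ideal — equivalently, $\delta_I(I)$ contains a nonzero element of every fibre on a dense open set, which forces essentiality. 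I expect this last point to be quick given the machinery already developed, so the bulk of the proof is the $A^I$-membership check described above.
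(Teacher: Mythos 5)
Your verification that $\delta_I(a)$ belongs to $A^I$ — continuity of the norm function via Lemma \ref{ess ideal mem}, weak continuity checked on $\mathcal E^I$ and transferred to all of $\Omega^I$ by local uniform approximation, and the almost finite-dimensional condition split into interior points (inherited from $a$) and corona points (where $\check a$ is small) — together with the isometry and injectivity of $\delta_I$, is essentially the paper's own argument, and items (2) and (3) do hold by construction. The genuine gap is in item (1): you never prove that $\delta_I(I)$ is an \emph{ideal} of $A^I$. Since $\delta_I$ is merely a monomorphism of C$^*$-algebras (not a map defined on $A^I$), it is not automatic that its image absorbs multiplication by arbitrary elements of $A^I$, and your essentiality argument (nonzero fibres over the dense set $X^I$, hence nowhere-dense vanishing set) \emph{presupposes} that $\delta_I(I)$ is already an ideal; it says nothing about why $y\,\delta_I(a)\in\delta_I(I)$ for $y\in A^I$, $a\in I$.

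Concretely, what has to be shown is that for $y\in A^I$ and $a\in I$ the operator field equal to $y(t)a(t)$ for $t\in X^I$ and to $0$ on $\beta X^I\setminus X^I$ is again of the form $\delta_I(b)$ for some $b\in I$; equivalently, that its restriction to $X^I$, extended by zero on $T\setminus X^I$, lies in $A$ (after which Lemma \ref{ess ideal mem} places it in $I$, since its norm function lies in $C_0(X^I)$). The paper resolves exactly this point by introducing the set $\mathfrak I=\{y\in A^I: y(t)=0\ \text{for all}\ t\in\beta X^I\setminus X^I\}$, which is manifestly an ideal of $A^I$, is essential because $X^I$ is dense in $\beta X^I$, and contains $\delta_I(I)$; the substantive step is the reverse inclusion $\mathfrak I\subseteq\delta_I(I)$: given $y\in\mathfrak I$, the restriction $y|_{X^I}$ is almost finite-dimensional and weakly continuous with respect to $\Omega_b|_{X^I}$, so its zero-extension $y_{\rm ext}$ to $T$ belongs to $A$ and hence to $I$, and $\delta_I(y_{\rm ext})=y$. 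The identification $\delta_I(I)=\mathfrak I$ is what delivers both idealness and essentiality simultaneously, and this (or a direct verification that $A^I\,\delta_I(I)\subseteq\delta_I(I)$) is missing from your proposal. Once that is supplied, your fibrewise density criterion for essentiality is fine, and is in substance the same observation the paper makes for $\mathfrak I$.
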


\begin{proof} The topological space $X^I$ is regarded now as an open dense subset of $\beta X^I$;
hence, $C_0(X^I)$ is an essential ideal of $C(\beta X^I)$.

For every $a\in I$, define an operator field $\mathfrak a: \beta X^I\rightarrow \bigsqcup_{t\in \beta X^I} K(H_t^I)$
by $\mathfrak a_{|X^I}=a_{|X^I}$ and $\mathfrak a(t)=0$ for all $t\in\beta X^I\setminus X^I$. We show below that $\mathfrak a\in A^I$.

By Lemma \ref{ess ideal mem}, $\check{a}\in C_0(X^I)$. Thus,
$\check{\mathfrak a}_{\vert X^I}\in C_0(X^I)$ and satisfies $\check{\mathfrak a}(t)=0$ for all $t\in \beta{X^I}\setminus X^I$.
Hence, $\check{\mathfrak a}\in C(\beta X^I)$.

To prove that $\mathfrak a$ is a weakly continuous operator field, it is sufficient to verify the weak continuity condition in vector fields in $\mathcal E^I$,
as every $\nu\in\Omega^I$ is a local uniform limit of vector fields in $\mathcal E^I$. To this end, let $\omega,\eta\in \Omega_b$ and
consider the function $h(t)=\langle \mathfrak a(t)\,\overline \omega^I(t),\overline \eta^I(t)\rangle$, $t\in\beta X^I$.
Restricted to $X^I$, $h$ is continuous (since $a\in A$) and vanishes at infinity. As noted earlier, the facts $h_{|X^I}\in C_0(X)$ and
$h(t)=0$ for  all $t\in \beta{X^I}\setminus X^I$ imply that $h\in C(\beta X^I)$. Thus, $\mathfrak a$ is a weakly continuous operator field.

Lastly, we show that $\mathfrak a$ is approximately finite-dimensional with respect to $\Omega^I$.
Notice that $a$ has this property (with respect to $\Omega_b|_{X^I}$) on $X^I$. Thus, at every point $t_0\in X^I$ and for every $\varepsilon>0$ there will
be an open neighbourhood $U$ of $t_0$ in $X^I$ such that $\mathfrak a$ is approximately finite-dimensional with respect to $\Omega^I$ to within $\varepsilon$ on $U$.
Assume now $t_0\in \beta X^I\setminus X^I$ and let $\varepsilon>0$. Since $\check{\mathfrak a}(t_0)=0$,
 there is an open set $U\subset\beta X^I$
containing $t_0$ such that $0\le\check{\mathfrak a}(t)<\varepsilon$ for all $t\in U$.
This shows that $\check{\mathfrak a}$ is approximately finite-dimensional with respect to $\Omega^I$ to within $\varepsilon$ on $U$.
This completes the proof that $\mathfrak a\in A^I$.

Now define $\delta_I:I\rightarrow A^I$ by $\delta_I(a)=\mathfrak a$. Clearly $\delta_I$ is a homomorphism. Because $a(s)=0$ for all
$s\in T\setminus X^I$, we have $\|a\|=\max_{t\in X^I}\|a(t)\|$. Thus,
\[
\|\delta_I(a)\|\;=\;\max_{t\in\beta X^I}\|\mathfrak a(t)\|\;=\;
\max_{t\in X^I}\|\mathfrak a(t)\|\;=\; \max_{t\in X^I}\| a(t)\|\;=\; \|a\|\,,
\]
which shows that $\delta_I$ is a monomorphism.

It remains to prove that $\delta_I(I)$ is an essential ideal of $A^I$.
Let
\[
\mathfrak I\;=\;\{y\in A^I\,|\,y(t)=0\,,\;\forall\,t\in\beta X^I\setminus X^I\}\,,
\]
which is an essential ideal of $A^I$ that contains $\delta_I(I)$. We claim that $\mathfrak I=\delta_I(I)$. Indeed, let $y\in\mathfrak I$.
Thus, $\check y\in C_0(X^I)$, where $X^I$ is viewed as an open dense subset of $\beta X^I$. Since $y\in A^I$ then $y|_{X^I}$ is an
operator field which is almost finite-dimensional with respect to $\Omega_b|_{X^I}$ on $X^I$. If we now consider $X^I$ as a
dense open subset of $T$ we conclude that
 $y_{|X^I}$ extends to an element $y_{\rm ext}\in A$ such that $y_{\rm ext}(s)=0$ for all $s\in T\setminus X^I$. Moreover, by
 Lemma \ref{ess ideal mem} $y_{\rm ext}\in I$. It is clear that in this case $\delta_I(y_{\rm ext})=y$ which proves the previous claim.
\end{proof}

\section{A Direct-Limit Continuous Hilbert Bundle}\label{S:hb}

For every essential ideal $I$ of $A$, we have constructed in the previous section a continuous Hilbert bundle
$(\beta X^I, \{H_t^I\}_{t\in \beta X^I},\Omega^{I})$ and considered
the
spatial continuous trace C$^*$-algebra $A^I$ associated with this Hilbert bundle. Our aim in this section is to use
these constructions to pass to limiting objects:
\[
\begin{array}{ll}
\Delta\;=\;\displaystyle\lim_{\leftarrow}\,\beta X^I                     & \mbox{(a compact, extremely disconnected Hausdorff space)} \\
C(\Delta) \;=\; \displaystyle\lim_\rightarrow\,C(\beta X^I)         & \mbox{(an abelian AW$^*$-algebra)} \\
H_s^\Delta\;=\;\displaystyle\lim_{\rightarrow}\,H_{\Phi_I(s)}^I & \mbox{(a Hilbert space, for every $s\in\Delta$)} \\
\Omega^\Delta\;=\;\displaystyle\lim_{\rightarrow}\,\Omega^I   & \mbox{(a Banach space of vector fields)} \\
(\Delta, \{H_s\}_{s\in\Delta}, \Omega^\Delta)                           & \mbox{(a continuous Hilbert bundle).}
\end{array}
\]

We recall here for the reader's convenience the notions of inverse system and inverse limit of a family of sets $\{X_\alpha\}_{\alpha\in\Lambda}$,
where $\Lambda$ is a directed set. Assume $\mathcal F$ is a family of functions indexed by
subsets of $\Lambda\times\Lambda$, whereby:
\begin{enumerate}
\item $f_{\alpha\alpha}={\rm id}_{X_\alpha}$;
\item if $(\alpha,\beta)$ satisfies $\alpha\le\beta$, then $f_{\alpha\,\beta}:X_\beta\rightarrow X_\alpha$;
\item if $\alpha\leq\beta\leq\gamma$, then $f_{\alpha\gamma}=f_{\alpha\beta}\circ f_{\beta\gamma}$.
\end{enumerate}
The triple $(\Lambda, \{X_\alpha\}_{\alpha\in\Lambda}, \mathcal F)$ is called an inverse system.

The inverse limit of the inverse system $(\Lambda, \{X_\alpha\}_{\alpha\in\Lambda}, \mathcal F)$
is the set denoted by $\displaystyle\lim_{\leftarrow}\, X_\alpha$ and defined to be the subset of the Cartesian
product $\displaystyle\prod_\alpha X_\alpha$ consisting of all $x=(x_\alpha)_\alpha$ with the
property that $x_\alpha=f_{\alpha,\beta}(x_\beta)$ whenever $\alpha\le\beta$. If $f_{\alpha}$ denotes the
projection of $\displaystyle\lim_{\leftarrow}\, X_\alpha$ onto $X_\alpha$, then
$f_\alpha=f_{\alpha\beta}\circ f_\beta$
whenever $\alpha\le\beta$. We shall make use of the fact that the projections $f_\alpha$ are surjective if
each $f_{\alpha\beta}$, $\alpha\leq\beta$, is surjective.

If each $X_\alpha$ is a topological space, if $\displaystyle\prod_\alpha X_\alpha$ has the product topology, and
if the functions in $\mathcal F$ are continuous, then the functions $f_\alpha:\displaystyle\lim_{\leftarrow}\, X_\alpha\rightarrow X_\alpha$
are continuous. When all $X_\alpha$ are compact, then so is $\displaystyle\lim_{\leftarrow}\, X_\alpha$.

The dual notions of direct system and direct limit are familiar to operator algebraists, and so they
will not be defined here.

 \begin{proposition}\label{systems1} {\rm (Inverse and Direct Systems)}
 \begin{enumerate}
 \item There exists an inverse system $( \mathcal I_{\rm ess}(A), \{\beta X^I\}_{I}, \{\Phi_{JI}\}_{J \preccurlyeq I})$
          of compact spaces and continuous surjections.
 \item There exists a direct system $( \mathcal I_{\rm ess}(A), \{\Omega^I\}_{I}, \{\lambda_{JI}\}_{J\preccurlyeq I})$
          of bounded vector fields and linear isometries.
 \end{enumerate}
 \end{proposition}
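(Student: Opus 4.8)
The plan is to construct, for each pair of essential ideals $J \preccurlyeq I$ (so $I \subset J$), a continuous surjection $\Phi_{JI}: \beta X^I \to \beta X^J$ and an isometry $\lambda_{JI}: \Omega^J \to \Omega^I$, and then verify the functoriality axioms (1)--(3) of an inverse (resp. direct) system as set out just above the proposition. For part (1), first observe that since $I \subset J$ we have $Z^J \subset Z^I$, hence $X^J \subset X^I$; in fact $X^J$ is an open subset of the open dense set $X^I \subset T$, and is itself dense in $T$, hence dense in $X^I$. The inclusion $X^J \hookrightarrow X^I$ is therefore a continuous map from $X^J$ into the compact Hausdorff space $\beta X^I$, so by the universal property of the Stone--\v Cech compactification it extends uniquely to a continuous map $\Phi_{JI}: \beta X^J \to \beta X^I$. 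Wait --- I need to be careful about the direction: the proposition writes $\Phi_{JI}: \beta X^I \to \beta X^J$ for $J \preccurlyeq I$, i.e.\ the arrow goes from the ``larger index'' $I$ to the ``smaller index'' $J$, which in terms of ideals means from $\beta X^I$ (with $X^I$ the \emph{smaller} open set, since $I$ is the \emph{larger} ideal) to $\beta X^J$ (with $X^J$ the larger open set). Since $X^I \subset X^J$, the inclusion $X^I \hookrightarrow X^J \hookrightarrow \beta X^J$ extends uniquely to $\Phi_{JI}: \beta X^I \to \beta X^J$. Surjectivity of $\Phi_{JI}$ follows because its image is a compact (hence closed) subset of $\beta X^J$ containing the dense subset $X^I$ (dense in $X^J$, hence dense in $\beta X^J$); a closed dense set is everything. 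The identity axiom $\Phi_{II} = \mathrm{id}$ is immediate from uniqueness of the extension, and the cocycle condition $\Phi_{KJ} \circ \Phi_{JI} = \Phi_{KI}$ for $K \preccurlyeq J \preccurlyeq I$ (i.e.\ $I \subset J \subset K$, so $X^I \subset X^J \subset X^K$) again follows from uniqueness of continuous extensions from the dense subset $X^I$, since both sides agree with the triple inclusion $X^I \hookrightarrow \beta X^K$ on $X^I$.

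For part (2) I want, for $J \preccurlyeq I$, a linear isometry $\lambda_{JI}: \Omega^J \to \Omega^I$ compatible with $\Phi_{JI}$ in the sense that it covers the bundle map --- concretely, $\lambda_{JI}(\nu)(t)$ should be an isometric image of $\nu(\Phi_{JI}(t))$ in the appropriate fibre. The natural construction is to build it first on the generating sub-$C_b(T)$-modules: for $\omega \in \Omega_b$, its restriction $\omega|_{X^I}$ is (a restriction of) the restriction $\omega|_{X^J}$, and since $X^I \subset X^J$ we have a well-defined map $\overline{\omega}^J \mapsto \overline{\omega}^I$ at the level of the dense generating spaces $\mathcal E^J \to \mathcal E^I$ (both are built from $\Omega_b$ by the recipe of Section~\ref{S:ideal}). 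One must check this is isometric fibrewise: for $t \in \beta X^I$, $\|\overline{\omega}^I(t)\| = \sqrt{\tilde\phi^I_{\omega,\omega}(t)}$, and this equals $\sqrt{\tilde\phi^J_{\omega,\omega}(\Phi_{JI}(t))} = \|\overline{\omega}^J(\Phi_{JI}(t))\|$ by uniqueness of continuous extensions of the bounded continuous function $\phi_{\omega,\omega}$ from $X^I$ (where the two functions literally agree, both equal to $t \mapsto \langle \omega(t), \omega(t)\rangle$). Hence the assignment $\overline{\omega}^J \mapsto \overline{\omega}^I \circ \Phi_{JI}$... more precisely $\overline{\omega}^J \circ \Phi_{JI} = $ the element of $\mathcal E^I$ attached to $\omega$; this is norm-preserving on each fibre, hence extends to an isometric map of the fibre Hilbert spaces $H^J_{\Phi_{JI}(t)} \to H^I_t$, and assembling these gives a fibrewise-isometric linear map $\mathcal E^J \to \mathcal E^I$. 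Then extend to all of $\Omega^J$ by continuity: if $\nu \in \Omega^J$ is a local uniform limit of $\overline{\omega}^J_n \in \mathcal E^J$ near $\Phi_{JI}(t_0)$, then $\nu \circ \Phi_{JI}$ is a local uniform limit of the corresponding fields near $t_0$ (pull back the open neighbourhoods via continuity of $\Phi_{JI}$), so it lies in $\Omega^I$; define $\lambda_{JI}(\nu) = \nu \circ \Phi_{JI}$ in this sense. Linearity is clear, and $\|\lambda_{JI}(\nu)\| = \sup_{t \in \beta X^I}\|\nu(\Phi_{JI}(t))\| = \sup_{s \in \beta X^J}\|\nu(s)\| = \|\nu\|$ using surjectivity of $\Phi_{JI}$ from part (1) --- this is exactly where surjectivity is needed to get an isometry rather than merely a contraction. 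The identity and cocycle axioms for $\{\lambda_{JI}\}$ then follow from the corresponding ones for $\{\Phi_{JI}\}$ together with the fact that the fibre identifications were canonical (built from the same $\Omega_b$).

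The main obstacle I anticipate is the fibrewise isometry claim together with the well-definedness of the fibre maps $H^J_{\Phi_{JI}(t)} \to H^I_t$: one must check that the quotient null spaces match up correctly, i.e.\ that $\overline{\omega}^J(\Phi_{JI}(t)) = 0$ if and only if the corresponding $\overline{\omega}^I(t) = 0$, which again reduces to the equality $\tilde\phi^I_{\omega,\omega}(t) = \tilde\phi^J_{\omega,\omega}(\Phi_{JI}(t))$ and hence to uniqueness of continuous extension from $X^I$ --- but one must be slightly careful that $X^I$ is dense in $\beta X^J$ (true, since $X^I$ is dense in $T$ hence in $X^J$ hence in $\beta X^J$) so that the extension of $\phi^J_{\omega,\omega}$ is pinned down by its values on $X^I$. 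Once this compatibility of the quotient constructions under restriction is nailed down, everything else is routine bookkeeping with universal properties and density arguments, essentially repeating the verifications already used in Proposition~\ref{prop: ext chb} and Proposition~\ref{essen emb}.
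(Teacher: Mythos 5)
Your proposal is correct and takes essentially the same route as the paper: extend the inclusion $X^I\hookrightarrow X^J$ to $\Phi_{JI}:\beta X^I\to\beta X^J$ by the universal property of $\beta$ (surjectivity from density of $X^I$ in $\beta X^J$), build fibrewise isometries $H^J_{\Phi_{JI}(t)}\to H^I_t$ on the dense images of $\Omega_b$ via uniqueness of continuous extensions of $\phi_{\omega,\nu}$, set $\lambda_{JI}\nu=\nu\circ\Phi_{JI}$, and get isometry of $\lambda_{JI}$ from surjectivity of $\Phi_{JI}$ --- exactly the paper's $\Psi_{JIt}$ construction. The only blemish is the early slip ``$Z^J\subset Z^I$ hence $X^J\subset X^I$'' (and calling $I$ the larger ideal), which you correct yourself; all the actual constructions use the right containment $X^I\subset X^J$.
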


\begin{proof} By \cite[Theorem VII.7.3]{Dugundji-book}, the continuous embedding of a locally compact Hausdoff space $Y$ into its Stone--\v Cech
compactification $\beta Y$ is an open map. Hence, assuming that $J \preccurlyeq I$, we have that
$i_J(X^I)$ is open (and dense) in $\beta X^J$.
By the Universal Property of the Stone--\v Cech compactification, there is a (unique)
continuous $\Phi_{JI}:\beta X^I\rightarrow \beta X^J$ such that
\begin{equation}\label{used a lot}
\iota_J{}_{|X^I}\,=\,\Phi_{JI}\circ \iota_I\,.
\end{equation}
Moreover, $\Phi_{JI}$ is surjective
because the open set $i_J(X^I)$ is dense in $\beta X^J$.
Finally, it is evident that $\Phi_{II}={\rm id}_{X^I}$ and that $K \preccurlyeq J \preccurlyeq I$
leads to
$\Phi_{KI}=\Phi_{KJ}\circ\Phi_{JI}$. Hence, $( \mathcal I_{\rm ess}(A), \{\beta X^I\}_I, \{\Phi_{JI}\}_{J\preccurlyeq I})$ is an inverse system,
proving the first statement.

The second assertion requires an intermediate step that we shall use later on.
For every  $I,\,J\in  \mathcal I_{\rm ess}(A)$
for which  $J \preccurlyeq I$ and every $t\in \beta X^I$ we shall define a
unitary $\Psi_{JIt}$, such that
\begin{equation}\label{e:hs iso}
\left\{ \begin{array} {ll}
\Psi_{JIt}:H^J_{\Phi_{JI}(t)}\rightarrow H^I_t, & \mbox{for }\;  J \preccurlyeq I \\  & \\
\Psi_{KIt}=\Psi_{JIt}\circ\Psi_{KJ\Phi_{JI}(t)}, & \mbox{for }\; K \preccurlyeq J \preccurlyeq I
\end{array}
\right\}
\end{equation}
To achieve this we fix $t\in\beta X^I$ and $I\in  \mathcal I_{\rm ess}(A)$. Recall that for any $L\in \mathcal I_{\rm ess}(A)$ the
linear space $\{\overline\omega^L(s)\,:\,\omega\in \Omega_b{}_{|X^L})\}$ is dense in $H^L_s$.
Hence, if  $J \preccurlyeq I$,  the map $\overline\omega^J\left(\Phi_{JI}(t)\right)\mapsto \overline\omega^I(t)$
is a well defined linear isometry, and so it extends to a unitary
$\Psi_{JIt}:H^J_{\Phi_{JI}(t)}\rightarrow H^I_t$. Now if $K \preccurlyeq J \preccurlyeq I$, then $\Psi_{KIt}=\Psi_{JIt}\circ\Psi_{KJ\Phi_{JI}(t)}$
follows immediately from $\Phi_{KI}(t)=\Phi_{KJ}\circ\Phi_{JI}(t)$.

Now to prove our second assertion, assume
$I, J\in \mathcal I_{\rm ess}(A)$ are such that $J \preccurlyeq I$.
If $\nu\in\Omega^J$, then a vector field $\tilde\nu:\beta X^I\rightarrow\sqcup_{t\in\beta X^I}H_t^I$ is defined as follows:
\begin{equation}\label{heavy lambda def}
\tilde\nu\,(t)\;=\;\Psi_{JIt}\circ\nu\circ\Phi_{JI}\,(t)\,,\quad t\in\beta X^I\,.
\end{equation}
Observe that if $\nu\in\Omega_b$, then $\tilde{(\overline\omega^J)}=\overline\omega^I$.
Let $\lambda_{JI}$ be the function with domain $\Omega^J$
and defined by $\lambda_{IJ}\nu=\tilde\nu$. Note that $\lambda_{JI}$ is a linear transformation and that
\[
\sup_{t\in\beta X^I}\,\|\tilde\nu(t)\|\;=\; \sup_{t\in\beta X^I}\,\|\nu\left(\Phi_{JI}(t)\right)\|\;=\;\sup_{r\in\beta X^J}\,\|\nu(r)\|\,.
\]
The first equality above is on account of the operator $\Psi_{JIt}$ being an isometry, and the second is true because $\Phi_{JI}$ is a surjection.
Hence, $\tilde\nu$ is a bounded vector field of norm $\|\tilde\nu\|=\|\nu\|$. Because $\lambda_{JI}\left(\mathcal E^J\right)=\mathcal E^I$
and every $\nu\in\Omega^J$ is a local uniform limit of vectors fields in $\mathcal E^J$, we conclude that $\lambda_{JI}\nu$ is a local uniform limit of
vectors fields in $\mathcal E^I$, whence $\lambda_{JI}(\nu)\in\Omega^I$. Finally, by virtue of the properties
of $\Psi_{JIt}$ and $\Phi_{JI}$, we obtain $\lambda_{II}={\rm id}_{\Omega^I}$ and
$\lambda_{KI}=\lambda_{JI}\circ\lambda_{KJ}$ whenever $K \preccurlyeq J \preccurlyeq I$.
\end{proof}

\emph{Notation.} For the purposes of notational clarity, equation \eqref{heavy lambda def} is henceforth expressed more simply as
\begin{equation}\label{light lambda def}
\lambda_{JI}\nu\;=\;\nu\circ\Phi_{JI}\,.
\end{equation}
That is, \eqref{light lambda def} is shorthand for \eqref{heavy lambda def}.

Denote the inverse limit of the  inverse system $( \mathcal I_{\rm ess}(A), \{\beta X^I\}_I, \{\Phi_{JI}\}_{J\preccurlyeq I})$ by
\begin{equation}\label{delta def}
\Delta\;=\;\lim_{\leftarrow}\,\beta X^I\,,
\end{equation}
and let $\Phi_I:\Delta\rightarrow \beta X^I$ denote the continuous, surjective functions that
satisfy $\Phi_J=\Phi_{JI}\circ\Phi_I$ whenever $J\preccurlyeq I$. The space $\Delta$ is compact and Hausdorff. We shall note below that
$\Delta$ is also extremely disconnected;
thus, it is a \emph{Stonean} space.

If $J\preccurlyeq I$, then the continuous surjection $\Phi_{JI}:\beta X^I\rightarrow\beta X^J$ leads to a monomorphism $\rho_{JI}:C(\beta X^J)
\rightarrow C(\beta X^I)$ defined by
$\rho_{JI}(f)=f\circ\Phi_{JI}$ and in this way we produce a direct system of abelian C$^*$-algebras and monomorphisms.
By  \cite{semadeni1968},
\begin{equation}\label{semadeni}
C(\Delta) \;=\; \lim_\rightarrow\,C(\beta X^I)\,,
\end{equation}
the direct limit C$^*$-algebra of the system $( \mathcal I_{\rm ess}(A), \{C(\beta X^I)\}_{I}, \{\rho_{JI}\}_{J\preccurlyeq I})$.
Observe that \eqref{semadeni} states that
\[
\mloc\left( C_0(T) \right) \;=\; C(\Delta)\,.
\]
As the local multiplier algebra of an abelian C$^*$-algebra is an abelian AW$^*$-algebra \cite[Proposition 3.4.5]{Ara--Mathieu-book}, the
maximal ideal space of $\mloc\left( C_0(T) \right)$ is extremely disconnected, which is why $\Delta$ is Stonean.

Via the universal property, we deduce that the
algebraic direct limit of the system  $( \mathcal I_{\rm ess}(A), \{C(\beta X^I)\}_{I}, \{\rho_{JI}\}_{J\preccurlyeq I})$
is (identified with)
\begin{equation}\label{algebraic semadeni}
\displaystyle\mbox{\rm alg -}\lim_{\rightarrow}\,C(\beta X^I)\;=\;\{f\circ\Phi_I\,:\,I\in  \mathcal I_{\rm ess}(A),\;f\in C(\beta X^I)\}\,,
\end{equation}
which is uniformly dense in $C(\Delta)$.

We now construct Hilbert spaces $H_s^\Delta$. By \eqref{e:hs iso},
$( \mathcal I_{\rm ess}(A), \{H_{\Phi_I(s)}^I\}_{I}, \{\Psi_{JI\Phi_J(s)}\}_{J \preccurlyeq I})$
is a direct system of Hilbert spaces and unitaries, for each $s\in\Delta$.
Thus, we consider the Hilbert space direct limit
\begin{equation}\label{h delta}
H_s^\Delta\;=\;\lim_{\rightarrow}\,H_{\Phi_I(s)}^I\,
\end{equation}
(note that for $J\preccurlyeq I$, $H^J_{\Phi_J(s)}=H^I_{\Phi_I(s)}$).
Hence, for every $I\in  \mathcal I_{\rm ess}(A)$
there is a surjective linear isometry $\Psi_{I_s}:H_{\Phi_I(s)}^I\rightarrow H_s^\Delta$ such that
\[
\Psi_{J_s}\;=\;\Psi_{I_s}\circ \Psi_{JI\Phi_J(s)}\,,\quad\forall\, J \preccurlyeq I\,.
\]
Thus, the set
\begin{equation}\label{heavy algebraic h delta}
\{\Psi_{I_s}\nu\left(\Phi_I(s)\right)\,:\,I\in  \mathcal I_{\rm ess}(A),\;\nu\in\Omega^I\}
\end{equation}
is dense in $H_s^\Delta$. \emph{For notational simplicity}, we write \eqref{heavy algebraic h delta} as
\begin{equation}\label{algebraic h delta}
\{\nu\circ\Phi_I(s)\,:\,I\in  \mathcal I_{\rm ess}(A),\;\nu\in\Omega^I\}.
\end{equation}
Observe that the inner product in $H_s^\Delta$ of any two such vectors
$\nu_j\circ \Phi_{I_j}(s)$, $j=1,2$, is (well) defined by
\[
\left\langle \nu_1\circ \Phi_{I_1}(s),\,\nu_2\circ\Phi_{I_2}(s)\right\rangle \;=\;
\left\langle \nu_1\circ \Phi_{J}(s),\,\nu_2\circ\Phi_{J}(s)\right\rangle\,,
\]
for any $J\in \mathcal I_{\rm ess}(A)$ with $J \preccurlyeq I_1$ and  $J \preccurlyeq I_2$.

Likewise,
\begin{equation}\label{algebraic omega}
\displaystyle\mbox{\rm alg -}\lim_{\rightarrow}\,\Omega^I\;=\;\{\nu\circ\Phi_I\,:\,I\in  \mathcal I_{\rm ess}(A),\;\nu\in\Omega^I\}
\end{equation}
is an algebraic direct limit of vector spaces. Hence, every $\mu\in \displaystyle\mbox{\rm alg -}\lim_{\rightarrow}\,\Omega^I$ is a vector field $\Delta\rightarrow\bigsqcup_{s\in\Delta}H_s^\Delta$
via
\[
\mu(s)\;=\;\nu\left(\Phi_I(s)\right) \,\in\, H_s^\Delta\,,\;\mbox{for some }I\in \mathcal I_{\rm ess}(A)\mbox{ and } \nu\in\Omega^I\,.
\]

\noindent{\bf Notational Summary.} If $I,J\in \mathcal I_{\rm ess}(A)$  are such that $I\subset J$, and if $\nu\in\Omega^J$,
then
\begin{equation}\label{notation1}
\nu\circ\Phi_J\;=\;\nu'\circ\Phi_I\,,\;\mbox{ where }\;\nu'=\lambda_{JI}\nu=\nu\circ\Phi_{JI}\,.
\end{equation}

Our aim below is to complete $\displaystyle\mbox{\rm alg -}\lim_{\rightarrow}\,\Omega^I$ in a manner that will give it the
structure of a continuous Hilbert bundle over $\Delta$. Not only should this completion be closed under local uniform limits, but it should be
a $C(\Delta)$-module as well.

In what follows, let
\begin{equation}\label{algebraic omega E}
\mathcal E\;=\;\displaystyle\mbox{\rm alg -}\lim_{\rightarrow}\,\Omega^I\;=\;\{\nu\circ\Phi_I\,:\,I\in  \mathcal I_{\rm ess}(A),\;\nu\in\Omega^I\}\,.
\end{equation}

\begin{definition} $\Omega^\Delta$ is the set of all bounded vector fields
$\nu:\Delta\rightarrow\bigsqcup_{s\in\Delta}H^\Delta_s$ with the property that for
each $s_0\in \Delta$ and $\varepsilon >0$ there exist an open set $U\subset\Delta$
containing $s_0$ and $\omega\in\mathcal E$ such that $\|\nu(s)-\omega(s)\|<\varepsilon$
for all $s\in U$.
\end{definition}

\begin{proposition} $(\Delta, \{H_s^\Delta\}_{s\in\Delta}, \Omega^\Delta)$ is a continuous Hilbert bundle.
\end{proposition}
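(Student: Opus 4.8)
The plan is to verify axioms (I)--(IV) in the definition of a continuous Hilbert bundle, modelling the argument on the proof of Proposition~\ref{prop: ext chb}, since $\Omega^\Delta$ is defined exactly as a ``local uniform limit'' completion of the generating family $\mathcal E$. First I would dispose of axioms (III) and (IV). Axiom (IV) is essentially built into the definition: if $\xi$ is a vector field that is locally uniformly approximable by elements of $\Omega^\Delta$, then, chasing $\varepsilon/2$'s, it is also locally uniformly approximable by elements of $\mathcal E$, hence $\xi\in\Omega^\Delta$ (provided one also checks $\xi$ is bounded, which follows since locally it lies within $\varepsilon$ of a bounded field and $\Delta$ is compact). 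For axiom (III), I need $\check\nu\in C(\Delta)$ for every $\nu\in\Omega^\Delta$. It suffices to know this for $\nu\in\mathcal E$ and then pass to local uniform limits (the local uniform limit of continuous real-valued functions is continuous on a compact space). So I must check: if $\mu=\nu\circ\Phi_I$ for some essential ideal $I$ and $\nu\in\Omega^I$, then $s\mapsto\|\mu(s)\|$ is continuous on $\Delta$. But $\|\mu(s)\|=\|\Psi_{I_s}\nu(\Phi_I(s))\|=\|\nu(\Phi_I(s))\|=\check\nu(\Phi_I(s))$, and $\check\nu\in C(\beta X^I)$ by Proposition~\ref{prop: ext chb} while $\Phi_I:\Delta\to\beta X^I$ is continuous; so $\check\mu=\check\nu\circ\Phi_I$ is continuous. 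Here the key point, to be stated carefully, is that $\Psi_{I_s}$ is an isometry so that the norm of a field in $\mathcal E$ at $s$ depends only on the ``downstairs'' value $\check\nu(\Phi_I(s))$, independently of which representative $(I,\nu)$ is chosen.

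Next I would prove axiom (I), that $\Omega^\Delta$ is a $C(\Delta)$-module under the pointwise action $(f\cdot\nu)(s)=f(s)\nu(s)$. The argument is the same $\varepsilon$-pushing estimate as in Proposition~\ref{prop: ext chb}: given $f\in C(\Delta)$, $\nu\in\Omega^\Delta$, $s_0\in\Delta$ and $\varepsilon>0$, use continuity of $f$ at $s_0$ and the defining property of $\Omega^\Delta$ to find a common open neighbourhood $U$ of $s_0$ and an $\omega\in\mathcal E$ with $|f(s)-f(s_0)|<\varepsilon$ and $\|\nu(s)-\omega(s)\|<\varepsilon$ on $U$; then $\|f\cdot\nu(s)-f(s_0)\omega(s)\|<\varepsilon(\|f\|+\|\nu\|)$ on $U$. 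This shows $f\cdot\nu$ is locally uniformly approximable by scalar multiples $f(s_0)\omega$ of elements of $\mathcal E$. To conclude I need $f(s_0)\omega\in\mathcal E$ — but $\mathcal E=\mbox{\rm alg -}\lim_\rightarrow\Omega^I$ is a vector space and each $\Omega^I$ is a $C(\beta X^I)$-module, so a constant multiple of $\nu\circ\Phi_I$ is $(c\nu)\circ\Phi_I\in\mathcal E$. (Alternatively one notes $f(s_0)$ is the image of a constant in the dense subalgebra $\mbox{\rm alg -}\lim_\rightarrow C(\beta X^I)$, but the vector-space structure of $\mathcal E$ alone suffices.) Hence $f\cdot\nu\in\Omega^\Delta$.

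Finally, axiom (II): for each $s\in\Delta$, $\{\nu(s):\nu\in\Omega^\Delta\}=H^\Delta_s$. As noted in the proof of Proposition~\ref{prop: ext chb}, in the presence of (I), (III), (IV) it is enough to show this set is \emph{dense} in $H^\Delta_s$. But $\Omega^\Delta\supset\mathcal E$, and by construction \eqref{heavy algebraic h delta}--\eqref{algebraic h delta} the set $\{\mu(s):\mu\in\mathcal E\}=\{\Psi_{I_s}\nu(\Phi_I(s)):I\in\mathcal I_{\rm ess}(A),\ \nu\in\Omega^I\}$ is dense in $H^\Delta_s$, since $H^\Delta_s$ is the Hilbert-space direct limit of the $H^I_{\Phi_I(s)}$ via the unitaries $\Psi_{I_s}$. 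This completes the verification. The only genuinely delicate point — and the one I would write out most carefully — is the well-definedness underpinning axiom (III): namely that for $\mu\in\mathcal E$ the quantity $\|\mu(s)\|$ is unambiguous and equals $\check\nu(\Phi_I(s))$ for any representative, which is exactly where the isometry property of the connecting unitaries $\Psi_{JIt}$ from \eqref{e:hs iso} (and hence of $\Psi_{I_s}$) is used. Everything else is a routine transcription of the $\beta X^I$-level arguments to the inverse-limit level, using that $\Delta$ is compact Hausdorff and that the $\Phi_I$ are continuous surjections.
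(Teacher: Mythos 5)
Your proposal is correct and follows essentially the same route as the paper: the paper verifies only axiom (I), with exactly your $\varepsilon$-pushing estimate producing the approximant $f(s_0)\,\nu\circ\Phi_I\in\mathcal E$, and declares (II)--(IV) immediate (with (II) reduced to density as in Proposition \ref{prop: ext chb}). Your additional details for (II), (III), (IV) — the isometry of the connecting maps giving $\check\mu=\check\nu\circ\Phi_I$, the $\varepsilon/2$ chase for closure under local uniform limits, and the density of $\{\mu(s):\mu\in\mathcal E\}$ in $H_s^\Delta$ — are precisely what the paper leaves implicit.
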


\begin{proof} Of the axioms to be satisfied, the only one that is not immediate
is axiom (I): that $\Omega^\Delta$ is a $C(\Delta)$-module. To prove this, let $\xi\in\Omega^\Delta$
and $f\in C(\Delta)$. Choose $s_0\in \Delta$ and $\varepsilon>0$. By the continuity of $f$, there is
an open neighbourhood $U_1\subset\Delta$ of $s_0$ such that $|f(s)-f(s_0)|<\frac{\varepsilon}{2\|\xi\|}$,
for all $s\in U_1$. By definition of $\Omega^\Delta$, there exist an open neighbourhood $U_2\subset \Delta$ of $s_0$,
an $I\in \mathcal I_{\rm ess}(A)$, and a $\nu\in\Omega^I$
such that $\|\xi(s)-\nu\circ\Phi_I(s)\|<\frac{\varepsilon}{2 \|f\| }$, for all $s\in U_2$. Let $U=U_1\cap U_2$ to obtain
\[
\|f\cdot\xi(s)-f(s_0)\nu\circ\Phi_I(s)\|\,<\,\varepsilon\,,\;\mbox{ for all }s\in U\,.
\]
Now as $f(s_0)\nu\circ\Phi_I\in\Omega^I$, the inequality above implies that $f\cdot \xi $ is a local uniform limit of
elements of $\displaystyle\mbox{\rm alg -}\lim_{\rightarrow}\,\Omega^I$. Hence, $f\cdot\xi\in\Omega^\Delta$.
\end{proof}

We call $(\Delta, \{H_s\}_{s\in\Delta}, \Omega^\Delta)$ the
\emph{direct limit continuous Hilbert bundle} of the system described in item (ii) of Proposition \ref{systems1}.

The next result shows that elements  of $\Omega^\Delta$ are not just local uniform limits of elements of
$\displaystyle\mbox{\rm alg -}\lim_{\rightarrow}\,\Omega^I$, but rather
each $\xi\in\Omega^\Delta$ is a {global} uniform limit of elements of
$\displaystyle\mbox{\rm alg -}\lim_{\rightarrow}\,\Omega^I$.

\begin{theorem}\label{omega limit}  $\Omega^\Delta\;=\;\displaystyle\lim_{\rightarrow}\,\Omega^I$
as a Banach space.
\end{theorem}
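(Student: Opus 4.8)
The plan is to show that the normed space $\mathcal E = \mbox{\rm alg-}\lim_{\rightarrow}\Omega^I$, equipped with the supremum norm $\|\mu\| = \sup_{s\in\Delta}\|\mu(s)\|$, has $\Omega^\Delta$ as its completion; since $\lim_{\rightarrow}\Omega^I$ (the Banach-space direct limit of the system in Proposition \ref{systems1}(ii)) is by definition the completion of $\mathcal E$, this gives the identification. Two things must be checked: first, that $\Omega^\Delta$ is complete in the supremum norm and contains $\mathcal E$ as a dense subspace; second, that $\Omega^\Delta$ is contained in the abstract completion, i.e.\ every $\xi\in\Omega^\Delta$ is a \emph{global} uniform limit of elements of $\mathcal E$, not merely a local one. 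The density of $\mathcal E$ in $\Omega^\Delta$ and completeness of $\Omega^\Delta$ are relatively routine, so the crux is the upgrade from local to global approximation.

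For the completeness of $\Omega^\Delta$: if $\xi_n\in\Omega^\Delta$ is Cauchy in supremum norm with limit $\xi$ (a bounded vector field, by uniform convergence), then given $s_0$ and $\varepsilon>0$, pick $n$ with $\|\xi_n-\xi\|<\varepsilon/2$ and then an open $U\ni s_0$ and $\omega\in\mathcal E$ with $\|\xi_n(s)-\omega(s)\|<\varepsilon/2$ on $U$; combining gives $\|\xi(s)-\omega(s)\|<\varepsilon$ on $U$, so $\xi\in\Omega^\Delta$. That $\mathcal E\subset\Omega^\Delta$ is immediate from the definition (take $\omega=\mu$, $U=\Delta$).

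For the main point --- global approximation --- I would exploit the compactness of $\Delta$ together with the directedness of $\mathcal I_{\rm ess}(A)$. Fix $\xi\in\Omega^\Delta$ and $\varepsilon>0$. For each $s\in\Delta$ choose, by the definition of $\Omega^\Delta$, an open $U_s\ni s$, an index $I_s\in\mathcal I_{\rm ess}(A)$, and a $\nu_s\in\Omega^{I_s}$ with $\|\xi(t)-\nu_s\circ\Phi_{I_s}(t)\|<\varepsilon$ for all $t\in U_s$. By compactness extract a finite subcover $U_{s_1},\dots,U_{s_m}$, and using that $\mathcal I_{\rm ess}(A)$ is directed (the intersection of finitely many essential ideals is essential), choose $I\in\mathcal I_{\rm ess}(A)$ with $I\subset\bigcap_j I_{s_j}$, i.e.\ $I_{s_j}\preccurlyeq I$ for each $j$. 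Put $\nu_j' = \lambda_{I_{s_j}I}\nu_{s_j}\in\Omega^I$, so that $\nu_{s_j}\circ\Phi_{I_{s_j}} = \nu_j'\circ\Phi_I$ by \eqref{notation1}; thus on $\Phi_I^{-1}(\Phi_I(U_{s_j}))\supset U_{s_j}$ we have $\|\xi - \nu_j'\circ\Phi_I\|<\varepsilon$. Now I would construct a single $\nu\in\Omega^I$ that, pushed forward by $\Phi_I$, approximates $\xi$ globally within $\varepsilon$ by patching the $\nu_j'$ together --- using a partition of unity in $C(\beta X^I)$ subordinate to a cover of $\beta X^I$ derived from the $\Phi_I(U_{s_j})$, and the fact that $\Omega^I$, being a continuous Hilbert bundle, is a $C(\beta X^I)$-module closed under such patching (this is the standard Dixmier--Douady gluing argument: a locally finite convex combination of local sections that each approximate $\xi$ to within $\varepsilon$ still approximates $\xi$ to within $\varepsilon$). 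Then $\|\xi - \nu\circ\Phi_I\| \le \varepsilon$ on all of $\Delta$, and $\nu\circ\Phi_I\in\mathcal E$.

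The delicate point is that $\Phi_I$ need not be injective or open in a way that lets one directly transport the open cover $\{U_{s_j}\}$ of $\Delta$ to an open cover of $\beta X^I$; the sets $\Phi_I(U_{s_j})$ need not be open. I would handle this by working instead with a cover of $\Delta$ by sets of the form $\Phi_I^{-1}(V)$ for $V$ open in $\beta X^I$ --- shrinking the $U_{s_j}$ to basic such sets is possible because $\{\Phi_I^{-1}(V): V\subset\beta X^I\text{ open}, I\in\mathcal I_{\rm ess}(A)\}$ is a base for the topology of the inverse limit $\Delta$. Refining the original finite subcover through this base (passing to a possibly larger $I$, again using directedness) lets the partition of unity live downstairs on $\beta X^I$ and pull back cleanly, so the gluing takes place in $\Omega^I$ as required. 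This refinement step, reconciling the inverse-limit topology on $\Delta$ with a single finite stage $\beta X^I$, is where the real work lies; everything after it is the routine Dixmier--Douady patching.
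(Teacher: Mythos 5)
Your argument is correct and follows the same overall skeleton as the paper---reduce to basic saturated open sets $\Phi_I^{-1}(V)$, use compactness of $\Delta$ and directedness of $\mathcal I_{\rm ess}(A)$ to land in a single finite stage, then patch with a partition of unity---but the patching step is genuinely different. The paper's Lemma \ref{name} carries out your ``delicate point'' up front: each local approximant is arranged to work on a set of the form $U=\Phi_I^{-1}(V)$ with $V\subset\beta X^I$ open, by exactly the base-of-the-inverse-limit-topology plus directedness argument you sketch. However, the paper then takes the partition of unity $\{\varphi_i\}$ on $\Delta$ itself and approximates each $\varphi_i$ to within $\varepsilon/n$ by a pullback $\psi_i\circ\Phi_{J_i}$, invoking $C(\Delta)=\lim_{\rightarrow}C(\beta X^I)$ from \eqref{semadeni}; this introduces an extra error term and yields the bound $\varepsilon+\varepsilon(\|\xi\|+\varepsilon)$. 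Your variant---push the finite cover down to $\beta X^I$ (surjectivity of $\Phi_I$ guarantees that the sets $V_i$ cover $\beta X^I$ once the $\Phi_I^{-1}(V_i)$ cover $\Delta$), take the partition of unity downstairs, and pull back---avoids that approximation entirely and gives the clean estimate $\|\xi-\nu\circ\Phi_I\|\le\varepsilon$, since at each $s$ only indices with $s\in\Phi_I^{-1}(V_i)$ contribute. One intermediate sentence of yours is wrong as written: from $\nu_{s_j}\circ\Phi_{I_{s_j}}=\nu_j'\circ\Phi_I$ (which is \eqref{notation1}) you get the estimate only on $U_{s_j}$, not on the possibly larger set $\Phi_I^{-1}\bigl(\Phi_I(U_{s_j})\bigr)$, where $\xi$ need not be close to $\nu_j'\circ\Phi_I$; this is harmless in the end because your final argument uses the estimate only on saturated basic sets, where the issue disappears---but it is precisely why the reduction to such sets cannot be skipped. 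Your explicit verification that $\Omega^\Delta$ is complete and contains $\mathcal E$ fills in a point the paper leaves implicit.
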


\begin{lemma}\label{name} Assume $\xi\in\Omega^\Delta$.
For every $s_0\in\Delta$ and $\varepsilon>0$ there exist $I\in \mathcal I_{\rm ess}(A)$,
$\nu\in\Omega^I$, and an open set $V\subset\beta X^I$ such that the open set $U=\Phi_I^{-1}(V)\subset\Delta$
contains $s_0$ and $\|\xi(s)-\nu\circ\Phi_I(s)\|<\varepsilon$ for all $s\in U$.
\end{lemma}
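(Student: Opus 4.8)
The plan is to unwind the definition of $\Omega^\Delta$ at the point $s_0$ and then \emph{promote} the purely local datum produced there (an open set $U\subset\Delta$) to an open set of the special form $\Phi_I^{-1}(V)$, possibly at the cost of enlarging the essential ideal $I$. First I would apply the definition of $\Omega^\Delta$ directly: given $\xi\in\Omega^\Delta$, $s_0\in\Delta$, and $\varepsilon>0$, there exist an open set $W\subset\Delta$ containing $s_0$, an essential ideal $J\in\mathcal I_{\rm ess}(A)$, and a $\nu_0\in\Omega^J$ such that $\|\xi(s)-\nu_0\circ\Phi_J(s)\|<\varepsilon$ for all $s\in W$. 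The issue is that $W$ need not be of the form $\Phi_J^{-1}(V)$ for any open $V\subset\beta X^J$; the basic open sets of the inverse-limit topology on $\Delta$ have this form only for \emph{individual} coordinates, but a general open neighbourhood of $s_0$ is only a union of finite intersections of such cylinders.

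The key step is therefore to shrink $W$ to a basic open neighbourhood of $s_0$. Since the topology on $\Delta=\lim_\leftarrow\beta X^I$ is the subspace topology from the product $\prod_I\beta X^I$, there is a basic open set of the form $\bigcap_{k=1}^{n}\Phi_{I_k}^{-1}(V_k)$ with $s_0\in\bigcap_k\Phi_{I_k}^{-1}(V_k)\subset W$, where $V_k\subset\beta X^{I_k}$ is open. Because $\mathcal I_{\rm ess}(A)$ is a directed set under $\preccurlyeq$, choose $I\in\mathcal I_{\rm ess}(A)$ with $I\preccurlyeq I_k$ for $k=1,\dots,n$ and also $I\preccurlyeq J$; that is, $I\subset I_k$ for all $k$ and $I\subset J$. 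Using the compatibility $\Phi_{I_k}=\Phi_{I_k I}\circ\Phi_I$, set $V=\Phi_I\bigl(\bigcap_k\Phi_{I_k}^{-1}(V_k)\bigr)$. Since $\Phi_I:\Delta\to\beta X^I$ is a continuous surjection between compact Hausdorff spaces it need not be open, so a little care is needed here; instead I would take $V:=\bigcap_k\Phi_{I_k I}^{-1}(V_k)\subset\beta X^I$, which is open in $\beta X^I$, and observe that $\Phi_I^{-1}(V)=\bigcap_k\Phi_I^{-1}(\Phi_{I_k I}^{-1}(V_k))=\bigcap_k\Phi_{I_k}^{-1}(V_k)$. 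This is exactly the basic open set we started with, so it contains $s_0$ and is contained in $W$. Finally, transport $\nu_0$ along the connecting map: put $\nu=\lambda_{JI}\nu_0=\nu_0\circ\Phi_{JI}\in\Omega^I$, so that by \eqref{notation1} we have $\nu\circ\Phi_I=\nu_0\circ\Phi_J$ as vector fields on $\Delta$. Then for all $s\in U:=\Phi_I^{-1}(V)$ we have $s\in W$, hence $\|\xi(s)-\nu\circ\Phi_I(s)\|=\|\xi(s)-\nu_0\circ\Phi_J(s)\|<\varepsilon$, as required.

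The main obstacle is the bookkeeping around the inverse-limit topology: one must be careful that ``open neighbourhood of $s_0$ in $\Delta$'' is weakened correctly to a \emph{basic} open neighbourhood coming from finitely many coordinates, and that these finitely many coordinates are then amalgamated into a single index $I$ using directedness of $\mathcal I_{\rm ess}(A)$ \emph{together with} the index $J$ already in play. The identity $\Phi_I^{-1}\bigl(\bigcap_k\Phi_{I_k I}^{-1}(V_k)\bigr)=\bigcap_k\Phi_{I_k}^{-1}(V_k)$ is the crux and follows purely formally from $\Phi_{I_k}=\Phi_{I_k I}\circ\Phi_I$; no surjectivity or openness of $\Phi_I$ is needed, which sidesteps the fact that the inverse-limit projections are not open maps in general. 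Everything else — the directedness argument and the invocation of \eqref{notation1} to identify $\nu\circ\Phi_I$ with $\nu_0\circ\Phi_J$ — is routine.
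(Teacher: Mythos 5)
Your argument is correct and takes essentially the same route as the paper: unwind the definition of $\Omega^\Delta$ at $s_0$, shrink the resulting neighbourhood to one of the form $\Phi_I^{-1}(V)$ by amalgamating finitely many coordinates with the index $J$ via directedness of $\mathcal I_{\rm ess}(A)$, and transport $\nu_0$ along $\lambda_{JI}$ using $\nu_0\circ\Phi_J=(\lambda_{JI}\nu_0)\circ\Phi_I$ --- the only difference being that the paper simply cites Dugundji for the fact that single-coordinate cylinders $\Phi_K^{-1}(W)$ form a basis of $\Delta$ (and then takes $I=J\cap K$), whereas you rederive that fact from the product-topology description. One notational caveat: with the paper's convention $J\preccurlyeq I\Leftrightarrow I\subset J$, your conditions should be written $I_k\preccurlyeq I$ and $J\preccurlyeq I$, but since you immediately restate them as the correct inclusions $I\subset I_k$ and $I\subset J$, the mathematics is unaffected.
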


\begin{proof}
Let $s_0\in\Delta$. By definition, there are
$J\in \mathcal I_{\rm ess}(A)$, $\nu'\in\Omega^J$, and $U_1\subset\Delta$ such that $U_1$ is an open neighbourhood of $s_0$ and
$\|\xi(s)-\nu'\circ\Phi_J(s)\|<\varepsilon$ for all $s\in U_1$. Inside $U_1$ there is an open set $U$ containing $s_0$ such that $U$ has the
form $U=\Phi_{K}^{-1}(W)$, for some $K\in \mathcal I_{\rm ess}(A)$ and open set $W\subset \beta X^K$
\cite[Proposition 2.3 in Appendix Two]{Dugundji-book}. Consider the essential ideal $I=J\cap K$; thus,
$J\preccurlyeq I$ and $K\preccurlyeq I$, and so we consider the continuous functions
$\Phi_{JI}:\beta X^I\rightarrow\beta X^J$ and $\Phi_{KI}:\beta X^I\rightarrow\beta X^K$. Let $V=\Phi_{KI}^{-1}(W)\subset\beta X^I$
and $U=\Phi_I^{-1}(V)\subset\Delta$. The relation $\Phi_K=\Phi_{KI}\circ\Phi_I$ implies that
$\Phi_{K}^{-1}(W)=\Phi_I^{-1}\left( \Phi_{KI}^{-1}(W)\right)$. Thus, $s_0\in U=\Phi_K^{-1}(W)\subset U_1$. Now let $\nu\in\Omega^I$
be given by $\nu=\nu'\circ\Phi_{JI}$. Thus, for any $s\in U$, we have that
\[
\|\xi(s)-\nu\circ\Phi_I(s)\|\;=\;\|\xi(s)-\nu'\left(\Phi_{JI}\circ\Phi_I(s)\right)\|\;=\;\|\xi(s)-\nu'\circ\Phi_{J}(s)\|\;<\;\varepsilon\,,
\]
which completes the proof.
\end{proof}

We now prove Theorem \ref{omega limit}.

\begin{proof}
What we aim to prove is:
for each $\xi\in\Omega^\Delta$ and $\varepsilon>0$ there exist
$I\in \mathcal I_{\rm ess}(A)$ and $\nu\in\Omega^I$ such that $\|\xi(s)-\nu\circ\Phi_I(s)\|<\varepsilon$
for every $s\in \Delta$.

Fix $\varepsilon>0$. Lemma \ref{name} provides us with an open cover of $\Delta$ of a specific type.
Let $U_1,\dots, U_n$ be a finite subcover. Thus, for
each $1\leq i\leq n$ there are $I_i\in  \mathcal I_{\rm ess}(A)$,
$\nu_i\in\Omega^{I_i}$, and open sets $V_i\subset\beta X^{I_i}$ such that $U_i=\Phi_{I_i}^{-1}(V_i)\subset\Delta$
and $\|\xi(s)-\nu_i\circ\Phi_{I_i}(s)\|<\varepsilon$ for all $s\in U_i$.
Suppose that $\{\varphi_i\}_{i=1}^n$ is a partition of unity subordinate to the open cover
$\{U_i\}_{i=1}^n$.
By properties of the inverse limit \cite{semadeni1968}, for each $i$ there exists $J_i\in\mathcal I_{\rm ess}(A)$ and
$\psi_i\in C(\beta X^{J_i})$ such that $\|\varphi_i-\psi_i\circ\Phi_{J_i}\|<\frac{\varepsilon}{n}$.
Let $I=\bigcap_{i=1}^n  (I_i\cap J_i)\in\mathcal I_{\rm ess}(A)$ and let $X^I\subset T$ be the open
set corresponding to  $I$. Let $\psi_i'\in C(\beta X^{I})$ denote
$\psi_i'=\psi_i\circ\Phi_{J_iI}$; hence,
\[
\|\varphi_i-\psi_i'\circ\Phi_I\|\;=\;\|\varphi_i-\psi_i\circ\Phi_{J_iI}\circ\Phi_I\|\;=\;
\|\varphi_i-\psi_i\circ\Phi_{J_i}\|\;<\;\frac{\varepsilon}{n}\,.
\]
Consider now the following element $\nu\in \Omega^I$:
\[
\nu\;=\;\sum_{i=1}^n ( \psi_i'\circ\Phi_{I})\cdot \lambda_{I_iI}\nu_i\,.
\]
Then, for every $s\in\Delta$,
\[
\begin{array}{rcl}
\|\xi(s)-\nu\circ\Phi_I(s)\| &=& \left\|\displaystyle\sum_{i=1}^n\varphi_i(s)\xi(s)\,
-\,\displaystyle\sum_{i=1}^n\psi_i'(\Phi_{I}(s))\,\left(\nu_i\circ\Phi_{I_i}(s)\right)
\right\| \\ && \\
&\leq&
\displaystyle\sum_{i=1}^n\, \varphi_i(s)\,\left\| \xi(s)-\nu_i\circ\Phi_{I_i}(s)\right\|  \\ && \,+\,
\displaystyle\sum_{i=1}^n  \,|\varphi_i(s)-\psi_i'\circ\Phi_{I}(s)|\, \left\| \nu_i\circ\Phi_{I_i}(s) \right\|  \\ && \\
  & < & \varepsilon + \varepsilon ( \| \xi(s)  \|  + \varepsilon )  \,.
\end{array}
\]
Hence, $\|\xi-\nu\circ\Phi_I\|< \varepsilon + \varepsilon ( \| \xi  \|  + \varepsilon )$.
\end{proof}

\section{A Direct Limit C$^*$-Algebra}\label{S:limit algs}

In this section we keep the notation from the previous sections.
In particular, we use the maps $\delta_I$ from Proposition \ref{essen emb}.

\begin{proposition}\label{systems2}
There exists a direct system $( \mathcal I_{\rm ess}(A), \{A^I\}_{I}, \{\pi_{JI}\}_{J \preccurlyeq I})$
of C$^*$-algebras and monomorphisms
such that, for all $J \preccurlyeq I$,
$\delta_I=\pi_{JI}\circ\delta_J{}_{\vert I}$.
\end{proposition}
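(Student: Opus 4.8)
The plan is to build each monomorphism $\pi_{JI}:A^J\to A^I$ (for $J\preccurlyeq I$) by the same device already used to build $\lambda_{JI}$ on vector fields, namely by transporting operator fields along $\Phi_{JI}$ and conjugating fibrewise by the unitaries $\Psi_{JIt}$. Concretely, for $y\in A^J$ I would define $\pi_{JI}(y)$ to be the operator field on $\beta X^I$ given by $\pi_{JI}(y)(t)=\Psi_{JIt}\circ y(\Phi_{JI}(t))\circ \Psi_{JIt}^{*}\in B(H_t^I)$, which in the abbreviated notation of \eqref{light lambda def} reads $\pi_{JI}(y)=y\circ\Phi_{JI}$. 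The first task is to check that this operator field actually lies in $A^I$: it is clearly a bounded, fibrewise-compact operator field, its norm function is $\check y\circ\Phi_{JI}$, which lies in $C(\beta X^I)$ since $\check y\in C(\beta X^J)$ and $\Phi_{JI}$ is continuous; weak continuity is inherited because for $\omega,\eta\in\Omega_b$ one has $\langle \pi_{JI}(y)(t)\,\overline\omega^I(t),\overline\eta^I(t)\rangle = \langle y(\Phi_{JI}(t))\,\overline\omega^J(\Phi_{JI}(t)),\overline\eta^J(\Phi_{JI}(t))\rangle$ by the defining property $\Psi_{JIt}\overline\omega^J(\Phi_{JI}(t))=\overline\omega^I(t)$, and this is a continuous function of $\Phi_{JI}(t)$ composed with $\Phi_{JI}$; approximate finite-dimensionality transfers because a local almost-finite approximation of $y$ near $\Phi_{JI}(t_0)$, subordinate to fields $\overline\omega_1^J,\dots,\overline\omega_n^J$ whose fibre vectors are independent, pulls back under $\Phi_{JI}$ to an approximation of $\pi_{JI}(y)$ near $t_0$ subordinate to $\overline\omega_1^I,\dots,\overline\omega_n^I$, whose fibre vectors are independent precisely because $\Psi_{JIt}$ is unitary (hence injective on spans) — this is the step requiring a little care, so I would single it out as the main obstacle.

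Once $\pi_{JI}$ is shown to map $A^J$ into $A^I$, that it is a $*$-homomorphism is immediate from the fibrewise formula (conjugation by a fixed unitary $\Psi_{JIt}$ is a $*$-isomorphism $B(H^J_{\Phi_{JI}(t)})\to B(H^I_t)$, and composition with the surjection $\Phi_{JI}$ preserves pointwise algebraic operations). Injectivity follows from surjectivity of $\Phi_{JI}$: $\|\pi_{JI}(y)\|=\max_{t\in\beta X^I}\|y(\Phi_{JI}(t))\|=\max_{r\in\beta X^J}\|y(r)\|=\|y\|$, exactly as in the norm computation for $\lambda_{JI}$ in the proof of Proposition \ref{systems1}. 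The system axioms $\pi_{II}={\rm id}_{A^I}$ and $\pi_{KI}=\pi_{KJ}\circ\pi_{JI}$ for $K\preccurlyeq J\preccurlyeq I$ then drop out of the corresponding identities $\Phi_{II}={\rm id}$, $\Phi_{KI}=\Phi_{KJ}\circ\Phi_{JI}$ together with the cocycle relation $\Psi_{KIt}=\Psi_{JIt}\circ\Psi_{KJ\Phi_{JI}(t)}$ from \eqref{e:hs iso}.

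It remains to verify the compatibility with the embeddings $\delta_I$, i.e.\ $\delta_I=\pi_{JI}\circ\delta_J{}_{\vert I}$ for $J\preccurlyeq I$. Since $I\subset J$ as ideals of $A$, an element $a\in I$ also lies in $J$, and by Proposition \ref{essen emb} the field $\delta_J(a)$ agrees with $a$ on $X^J$ and vanishes on $\beta X^J\setminus X^J$, while $\delta_I(a)$ agrees with $a$ on $X^I$ and vanishes off $X^I$. Now $\Phi_{JI}$ restricts (via \eqref{used a lot}) to the inclusion $X^I\hookrightarrow X^J$, and on $X^I$ all the unitaries $\Psi_{JIt}$ are the identity under the identifications $H_t^I=H_t=H_t^J$; hence $\pi_{JI}(\delta_J(a))$ agrees with $a$ on $X^I$. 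Off $X^I$ we must show $\pi_{JI}(\delta_J(a))$ vanishes: for $t\in\beta X^I\setminus X^I$ the point $\Phi_{JI}(t)$ lies in $\beta X^J\setminus X^I$; if it further lies outside $X^J$ then $\delta_J(a)(\Phi_{JI}(t))=0$ directly, and the remaining possibility $\Phi_{JI}(t)\in X^J\setminus X^I$ is handled by observing that $\check{\delta_I(a)}\in C_0(X^I)\subset C(\beta X^I)$ and $\check{\pi_{JI}(\delta_J(a))}=\check{\delta_J(a)}\circ\Phi_{JI}$ are two continuous functions on $\beta X^I$ agreeing on the dense set $X^I$, hence equal, forcing the norm of $\pi_{JI}(\delta_J(a))(t)$ to be $0$ there; combining, $\pi_{JI}(\delta_J(a))$ and $\delta_I(a)$ are continuous operator fields on $\beta X^I$ that agree on the dense open set $X^I$, so by the weak-continuity characterisation they coincide on all of $\beta X^I$. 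This gives $\delta_I=\pi_{JI}\circ\delta_J{}_{\vert I}$ and completes the proof.
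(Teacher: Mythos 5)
Your proposal follows essentially the same route as the paper: define $\pi_{JI}(y)(t)=\Psi_{JIt}\,y\!\left(\Phi_{JI}(t)\right)\Psi_{JIt}^{-1}$, verify membership in $A^I$ by checking the norm function, weak continuity against $\mathcal E^I$, and the transfer of almost finite-dimensionality through the unitaries $\Psi_{JIt}$, obtain injectivity from the surjectivity of $\Phi_{JI}$, and get the system identities from those of $\Phi_{JI}$ and $\Psi_{JIt}$. The only (minor, equally valid) divergence is in proving $\delta_I=\pi_{JI}\circ\delta_J{}_{\vert I}$: the paper invokes the fact that $\Phi_{JI}$ maps $\beta X^I\setminus X^I$ into $\beta X^J\setminus X^J$, while you deduce the vanishing of $\pi_{JI}(\delta_J(a))$ off $X^I$ from the agreement of the continuous norm functions on the dense set $X^I$.
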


\begin{proof}
Assume that $J\preccurlyeq I$. For each $a\in A^J$, consider the bounded cross section $\tilde a$ of the fibred space
$(\beta X^I, \{B(H^I_t)\}_{t\in\beta X^I})$ that is defined by
\begin{equation}\label{heavy pi def}
\tilde a(t)\;=\;[\Psi_{JIt}]\,[a\left(\Phi_{JI}\,(t)\right)] \, [\Psi_{JIt}]^{-1} \,,\quad t\in\beta X^I\,.
\end{equation}
As before, we simplify the notation so that
\begin{equation}\label{light pi def}
\tilde a\;=\; a \circ \Phi_{JI}
\end{equation}
is now a shorthand expression of \eqref{heavy pi def}.

Let us now show that $\tilde a\in A^I$.  Continuity of $\check{\tilde a}$ follows from $\check{\tilde a}=\check{a}\circ\Phi_{JI}$. To show that $\tilde a$ is weakly continuous,
it is sufficient to use vector fields from $\mathcal E^I$. To this end, let $\omega_1,\omega_2\in\Omega_b$. Then
\[
\langle \tilde a(t)\,\overline\omega_1^I(t), \,\overline\omega_2^I(t)\rangle \;=\;
\left\langle   a(\Phi_{JI}(t))\,\overline\omega_1^J(\Phi_{JI}(t)), \,\overline\omega_2^J(\Phi_{JI}(t))\right\rangle\,,
\]
which is continuous as a function of $t\in\beta X^I$.

To show that $\tilde a$ is approximately finite-dimensional, select $t_0\in\beta X^I$ and $\varepsilon>0$. Consider $r_0=\Phi_{JI}(t_0)\in\beta X^J$.
Because $a\in A^J$, there is an open set $V\subset\beta X^J$ and $\nu_1,\dots,\nu_n\in\Omega^J$ such that, for every $r\in U$,
$\mbox{Span}\,\{\nu_1(r),\dots,\nu_n(r)\}$ is $n$-dimensional and $\|a(r)-p_ra(r)p_r\|<\varepsilon$, where $p_r\in B(H_r^J)$ is the projection
with range $\mbox{Span}\,\{\nu_1(r),\dots,\nu_n(r)\}$. Let $U=\Phi_{JI}^{-1}(V)$, an open neighbourhood of $t_0$. Consider the
vector fields $\lambda_{JI}\nu_\ell\in\Omega^I$, $1\leq\ell\leq n$. Because $\lambda_{JI}\nu_\ell(t)=\Psi_{JIt}\left(\nu(\Phi_{JI}(t))\right)$,
the subspace $\mbox{Span}\,\{\lambda_{JI}\nu_1(t),\dots,\lambda_{JI}\nu_n(t)\}$ is $n$-dimensional for all $t\in U$.
Let $q_t\in B(H_t^I)$ denote the projection onto this subspace, for each $t\in U$.
Then  $q_t=\Psi_{JIt}p_{\Phi_{JI}(t)}\Psi_{JIt}^{-1}$, which yields
$\|\tilde a(t)-q_t\tilde a(t) q_t\|<\varepsilon$ for all $t\in U$.

Define $\pi_{JI}:A^J\rightarrow A^I$ by $\pi_{JI}(a)=a\circ\Phi_{JI}$. It is now straightforward to verify that $\pi_{JI}$ is a homomorphism,
that $\pi_{JI}$ is isometric (since $\Phi_{JI}$ is surjective),
and that  $( \mathcal I_{\rm ess}(A), \{A^I\}_{I}, \{\pi_{JI}\}_{J \preccurlyeq I})$ is a direct system
of C$^*$-algebras and monomorphisms.

To prove that $\delta_I=\pi_{JI}\circ\delta_J{}_{\vert I}$, assume that $a\in I$. Thus, $\delta_I(a)$ is an operator field on $\beta X^I$
that vanishes on $\beta X^I\setminus X^I$ and agrees with $a$ on $X^I$. Thinking now of $I$ sitting inside $J$,
$\delta_J(a)$ is an operator field on $\beta X^J$ that vanishes on $\beta X^J\setminus X^J$.
Therefore the operator field $\pi_{JI}\delta_J(a)$ on $\beta X^I$ vanishes on $\beta X^I\setminus X^I$
because
$\Phi_{JI}$ maps $\beta X^I\setminus X^I$ into $\beta X^J\setminus X^J$ \cite[Theorem 6.12]{Gillman-Jerison-book}.
Hence, $\pi_{JI}\circ\delta_J(a)\in\delta_I(I)$. It is now straightforward to verify that  $\delta_I=\pi_{JI}\circ\delta_J{}_{\vert I}$.
\end{proof}

\noindent{\bf Notational Summary.} If $I,J\in \mathcal I_{\rm ess}(A)$  are such that  $J\preccurlyeq I$ and if $a\in A^J$,
then
\begin{equation}\label{notation2}
a\circ\Phi_J\;=\;a'\circ\Phi_I\,,\;\mbox{ where }\;a'=\pi_{JI}(a)=a\circ\Phi_{JI}\,.
\end{equation}

 Therefore, if $a\in A^I$ we have $a\circ \Phi_I:\Delta\rightarrow \bigsqcup_{s\in\Delta}B(H_s^\Delta)$, which induces a C$^*$-embedding of $A^I$ into  $\bigsqcup_{s\in\Delta}B(H_s^\Delta)$. Moreover, these embeddings are compatible with the direct system structure of $(\mathcal I_{\rm ess}(A), \{A^I\}_{I}, \{\pi_{JI}\}_{J \preccurlyeq I})$ of Proposition \ref{systems2}. Therefore
 if $A_\Delta$ denotes the norm-closure  $$A_\Delta:=\left(\displaystyle\bigcup_{I\in \mathcal I_{\rm ess}(A)}\{a\circ\Phi_I\,:\,a\in A^I\}\right)^{-\ \|\cdot\|}$$ then $A_\Delta=\lim_{\rightarrow}\,A^I$; that is, $A_\Delta$ is a concrete realisation of a C$^*$-limit of the directed system $\{A^I\}_{I\in \mathcal I_{\rm ess}(A)}$.

Proposition \ref{they are equal} below identifies the algebras $A^\Delta$ and $K(\Omega^\Delta)$, which were studied as separate
entities in the prequel \cite{argerami--farenick--massey2010}.
 
\begin{proposition}\label{they are equal}  Let $A^\Delta=A\left(\Delta, \{H_s\}_{s\in\Delta}, \Omega^\Delta\right)$ be
the continuous trace C$^*$-algebra associated with the continuous Hilbert bundle
$(\Delta, \{H_s\}_{s\in\Delta}, \Omega^\Delta)$. Then
\[
K(\Omega^\Delta)\,=\,A_\Delta \,= \,A^\Delta\,.
\]
\end{proposition}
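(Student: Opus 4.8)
The strategy is to prove the two equalities $A^\Delta = A_\Delta$ and $A_\Delta = K(\Omega^\Delta)$ separately, drawing on the limiting structures built in the previous sections. For the first equality, recall that by \cite[Theorem 4.4]{fell1961} the fibre algebra of $A^\Delta$ at each $s\in\Delta$ is $K(H_s^\Delta)$, and by \cite[Lemma 1.8]{fell1961} membership in $A^\Delta$ is a pointwise condition. The inclusion $A_\Delta\subset A^\Delta$ amounts to verifying that each generator $a\circ\Phi_I$ (with $a\in A^I$) is weakly continuous, almost finite-dimensional with respect to $\Omega^\Delta$, and vanishes at infinity on $\Delta$ — but $\Delta$ is compact so the last condition is automatic, and the first two are pulled back from the corresponding properties of $a\in A^I$ along the surjection $\Phi_I:\Delta\to\beta X^I$ exactly as in the proof of Proposition \ref{systems2} (since $\Omega^\Delta$ contains $\mathcal E = \mbox{\rm alg-}\lim_\rightarrow\Omega^I$, which suffices for checking weak continuity and near-finite-dimensionality). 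Since $A_\Delta$ is by construction norm-closed and $A^\Delta$ is complete, $A_\Delta\subset A^\Delta$.

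For the reverse inclusion $A^\Delta\subset A_\Delta$, I would take $x\in A^\Delta$ and $\varepsilon>0$, and use almost-finite-dimensionality together with compactness of $\Delta$ to produce a finite open cover $U_1,\dots,U_n$ on each of which $x$ is within $\varepsilon$ of a finite-rank operator field built from vector fields in $\Omega^\Delta$; then Theorem \ref{omega limit} lets us assume those vector fields lie in a single $\mbox{\rm alg-}\lim_\rightarrow\Omega^I$ (replacing them by uniformly close elements $\nu_j\circ\Phi_{I_j}$), and a partition of unity argument — mirroring the proof of Theorem \ref{omega limit} almost verbatim, including the approximation of the partition functions by elements $\psi_i'\circ\Phi_I\in C(\Delta)$ coming from a single $\beta X^I$ via \cite{semadeni1968} — glues these into a global element of the form $y\circ\Phi_I$ with $y\in A^I$ and $\|x - y\circ\Phi_I\|$ small. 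Letting $\varepsilon\to 0$ gives $x\in A_\Delta$.

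The equality $A_\Delta = K(\Omega^\Delta)$ is the part most directly inherited from the prequel: for any continuous Hilbert bundle the associated Fell algebra $A(\Delta,\{H_s\},\Omega^\Delta)$ coincides with the compact operators $K(\Omega^\Delta_0) = K(\Omega^\Delta)$ on the Hilbert $C(\Delta)$-module of its vector fields (here $\Omega^\Delta = \Omega^\Delta_b = \Omega^\Delta_0$ because $\Delta$ is compact), as established in \cite{argerami--farenick--massey2010}; combined with $A_\Delta = A^\Delta$ just proved, this gives the chain $K(\Omega^\Delta) = A^\Delta = A_\Delta$. One should simply cite the relevant identification from the prequel rather than reprove it.

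\textbf{Main obstacle.} The delicate step is the reverse inclusion $A^\Delta\subset A_\Delta$: one must check that the partition-of-unity gluing, which in Theorem \ref{omega limit} was performed on the level of \emph{vector fields}, goes through at the level of \emph{operator fields} — in particular that the resulting $y = \sum_i (\psi_i'\circ\Phi_I)\cdot(\text{finite-rank piece over }\beta X^I)$ genuinely lies in $A^I$ (weak continuity and almost-finite-dimensionality of this sum over $\beta X^I$) and that $y\circ\Phi_I$ approximates $x$ uniformly on all of $\Delta$, with the error controlled by the same $\varepsilon + \varepsilon(\|x\|+\varepsilon)$-type estimate. This requires knowing that $\mathcal F(\Omega^I)$ is a $C(\beta X^I)$-module (noted in the Preliminaries) and that local near-finite-dimensionality is stable under such finite convex combinations — routine but requiring care to assemble cleanly.
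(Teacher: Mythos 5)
Your first two steps match the paper's circle of ideas closely: the inclusion $A_\Delta\subset A^\\Delta$ is proved in the paper exactly as you describe (weak continuity checked against the uniformly dense family $\mathcal E$ and almost finite-dimensionality pulled back along the surjection $\Phi_I$, as in Proposition \ref{systems2}), and your direct argument for $A^\Delta\subset A_\Delta$ — local finite-rank approximants $p_s a(s)p_s\in\mathcal F(\Omega^\Delta)$ as in Lemma \ref{C=A}, compactness, a partition of unity, and Theorem \ref{omega limit} together with the density \eqref{algebraic semadeni} to push everything into a single index $I$ — is a sound, slightly fused version of what the paper does in two stages ($A^\Delta\subset K(\Omega^\Delta)$ by partition of unity, then $K(\Omega^\Delta)\subset A_\Delta$ via Theorem \ref{omega limit}).

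The genuine weak point is your third step. You propose to ``simply cite'' the identification $A^\Delta=K(\Omega^\Delta)$ from \cite{argerami--farenick--massey2010}, but the paper states explicitly that $A^\Delta$ and $K(\Omega^\Delta)$ ``were studied as separate entities in the prequel''; this identification is part of what Proposition \ref{they are equal} itself establishes, so it is not available as a black box, and Lemma \ref{C=A} alone does not give it either, since that lemma only yields \emph{local} uniform approximation by $\mathcal F$-fields, whereas membership in $K(\Omega^\Delta)$ requires \emph{global} approximation. The paper closes this gap with two short arguments: $A^\Delta\subset K(\Omega^\Delta)$ by gluing the local approximants $h_j\in\mathcal F(\Omega^\Delta)$ with a partition of unity (using that $\mathcal F(\Omega^\Delta)$ is a $C(\Delta)$-module), and $K(\Omega^\Delta)\subset A_\Delta$ by combining Theorem \ref{omega limit} with the fact, quoted from the prequel, that finite sums of the operators $\Theta_{\xi,\xi}$ are dense in the positive cone of $K(\Omega^\Delta)$. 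Note that your own machinery already fills the hole: gluing your local finite-rank approximants with the genuine partition of unity $\{\varphi_j\}\subset C(\Delta)$ (before approximating the $\varphi_j$ and the vector fields from a single $\beta X^I$) shows $A^\Delta\subset K(\Omega^\Delta)$, while $\mathcal F(\Omega^\Delta)\subset A^\Delta$ together with norm-closedness of $A^\Delta$ gives the reverse inclusion; so the defect is the unjustified citation rather than a wrong idea, but as written the chain of equalities is not complete.
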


\begin{proof} We first show that $\{a\circ\Phi_I\,:\,a\in A^I\}\subset A^\Delta$, for every $I\in \mathcal I_{\rm ess}(A)$. Suppose that $a\circ\Phi_I$ and that $\omega_1,\omega_2\in\Omega^\Delta$ are of the form $\omega_i=\omega_i'\circ\Phi_{J_i}$
for some $J_1,J_2\in \mathcal I_{\rm ess}(A)$ and $\omega_i'\in\Omega^{J_i}$. Let $K=I\cap J_1\cap J_2$, an essential ideal of $A$ such that $I\preccurlyeq K$ and $J_i\preccurlyeq K$.
Because $a\circ\Phi_I=(a\circ\Phi_{IK})\circ\Phi_K$ and $\omega_i'\circ\Phi_{J_i}=(\omega_i'\circ\Phi_{J_iK})\circ\Phi_K$, the continuity of the map
$s\mapsto\langle a\left(\Phi_I(s)\right)\omega_1(s),\omega_2(s)\rangle$ is immediate. As vector fields of the form $\omega=\omega'\circ\Phi_{J}$ are
uniformly dense in $\Omega^\Delta$, the operator field $a\circ\Phi_I$ is weakly continuous.

To show that $a\circ\Phi_I$ is almost finite-dimensional, assume $s_0\in \Delta$ and $\varepsilon>0$.
Let $t_0=\Phi_I(s_0)$. As $a$ is almost finite-dimensional, there
are an open set $V\in\beta X^I$ containing $t_0$ and $\omega_j\in\Omega^I$, $1\leq j\leq n$, such that, for all $t\in V$,
$\{\omega_j(t)\}_{j=1}^n$
is a set of linearly independent vectors and $\|a'(t)-p_ta'(t)p_t\|<\varepsilon$, where $p_t$ is the projection onto the span of
$\{\omega_j(t)\}_{j=1}^n$. Pull back to $\Delta$ using the open neighbourhood $U=\Phi_I^{-1}(V)$ of $s_0$
and rank-$n$ projections $q_s=p_{\Phi_I(s)}$ onto the span $\{\omega_j(\Phi_I(s))\}_{j=1}^n$ to obtain $\|a(s)-q_sa(s)q_s\|<\varepsilon$
for all $s\in U$. This completes the proof that $\{a\circ\Phi_I\,:\,a\in A^I\}\subset A^\Delta$.

In order to prove the equalities above, note that by the first part of the proof
we obtain that $A_\Delta\subset A^\Delta$.

By Theorem \ref{omega limit} any $\xi\in \Omega^\Delta$ is uniformly approximated to within $\varepsilon$
on $\Delta$ by some $\omega=\nu\circ\Phi_J$ of norm within $\varepsilon$ of $\|\xi\|$; therefore, we can conclude that
\[
\|\Theta_{\xi,\xi}\,-\,\Theta_{\omega,\omega}\|\;\leq\;   \|\check{\xi}-\check{\omega}\|(\|\check{\xi}\|+\|\check{\omega}\|)\;\leq\; C\varepsilon\,,
\]
where $C$ is a constant depending on $\|\xi\|$. As the set of all finite sums of the form $\Theta_{\xi,\xi}$ is dense in the positive cone of $K(\Omega^\Delta)$
\cite[Lemma 4.2]{argerami--farenick--massey2010}, we deduce that
$K(\Omega^\Delta)\subset A_\Delta$.

To conclude, we now prove that $A^\Delta\subset K(\Omega^\Delta)$.
Select $a\in A^\Delta$ and $\varepsilon>0$. For every $s_0\in\Delta$ there are an open set $U_{s_0}\subset\Delta$
containing $s_0$ and vector fields $\omega_1,\ldots,\omega_n\in \Omega^\Delta$ such that $\|a(s)-p_s\,a(s)\,p_s\|< \varepsilon$ $s\in U_{s_0}$,
where $p_s$ is the orthogonal projection onto Span$\{\omega_j(s):\ 1\leq j\leq n\}$. It turns out that $h_{s_0}:=p_s\,a(s)\,p_s\in \mathcal F(\Omega^\Delta)$
(see the proof of Lemma \ref{C=A}).
Therefore, $\{U_{s_0}\}_{s_0\in\Delta}$ is an open cover of $\Delta$ from which a finite subcovering $U_1,\dots, U_n$
exists; let $h_j\in  \mathcal F(\Omega^\Delta)$ denote the local approximant of $a$ on $U_j$, for each
$1\leq j\leq n$, and let $\{\varphi_j\}_{1\leq j\leq n}\subset C(\Delta)$ be a partition of unity subordinate to $\{U_j\}_{1\leq j\leq n}$.
Because $\cF(\Omega^\Delta)$ is a $C(\Delta)$-module
we have $h=\sum_{j=1}^n\varphi_i\cdot h_j\in
\mathcal F(\Omega^\Delta)$. Therefore, for every $s\in\Delta$,
\[
\|a(s)-h(s)\|\;\leq\;\sum_{j=1}^n\varphi_i(s)\,\|a(s)-h_j(s)\|\;<\;\varepsilon\,.
\]
Hence, $\|a-h\|<\varepsilon$ and so $a\in K(\Omega^\Delta)$.
\end{proof}

\section{A chain of inclusions of C$^*$-algebras}\label{sect:chain of inclusions}

An alternate description of $A$ is useful; we adopt terminology used in
\cite{akemann--pedersen--tomiyama1973}.
Consider $\Omega_0$ as a Hilbert C$^*$-module over $C_0(T)$.

Every $\kappa\in\mathcal F(\Omega_0)$ is a cross section of the fibred space $(T,\{ K(H_t)\}_{t\in T})$, and the set
$\mathcal F(\Omega_0)$ has the following properties: (i)
$\mathcal F(\Omega_0)$ is a $*$-algebra with respect to pointwise operations; (ii) $\{\kappa(t)\,:\,\kappa\in \mathcal F(\Omega_0)\}$
is dense in $K(H_t)$ for all $t\in T$; and (iii) $\check{k}\in C_0(T)$, for each $\kappa\in \mathcal F(\Omega_0)$.

A cross section $a$ of the fibred space $(T,\{ K(H_t)\}_{t\in T})$ is said to be \emph{continuous with respect to $\cF(\Omega_0)$} if
for each $t_0\in T$ and $\varepsilon>0$ there exist $\kappa\in \cF(\Omega_0)$ and an open set $U\subset T$ containing $t_0$
such that $\|a(t)-\kappa(t)\|< \varepsilon$ for every $t\in U$ .
(The terms ``continuous with respect to'' and ``local uniform limit of'' have the same meaning; however, as
the former terminology is used in the paper \cite{akemann--pedersen--tomiyama1973}, we adopt this
phrase here.)

Let $C=C_0(T,\{K(H_t)\}_{t\in T},\cF(\Omega_0))$ be the set of all cross sections $a$
of the fibred space $(T,\{ K(H_t)\}_{t\in T})$ that are continuous with respect to $\cF(\Omega_0)$ and satisfy $\check{a}\in C_0(T)$.
With respect to pointwise operations and the supremum norm, $C$ is a C$^*$-algebra.

\begin{lemma}\label{C=A}
$A(T,\{H_t\}_{t\in T},\Omega)\,=\,C_0(T,\{K(H_t)\}_{t\in T},\cF(\Omega_0))$.
\end{lemma}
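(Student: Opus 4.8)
The plan is to show both algebras are realised as cross sections of the same fibred space $(T,\{K(H_t)\}_{t\in T})$ and that the two notions of admissible cross section coincide. The key point is that the bundle data for both algebras is essentially the same: by \cite[Theorem 4.4]{fell1961} the fibre $A_t$ of $A$ at $t$ equals $K(H_t)$, and the span of the vectors $\omega(t)$ over $\omega\in\Omega$ (equivalently $\omega\in\Omega_0$, by local uniform approximation and a cutoff argument) is dense in $H_t$. Thus an element of either algebra is a cross section $a$ with $a(t)\in K(H_t)$ for all $t$ and $\check a\in C_0(T)$; the only thing to verify is that ``weakly continuous and almost finite-dimensional with respect to $\Omega$'' is equivalent to ``continuous with respect to $\cF(\Omega_0)$.''

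First I would prove the inclusion $C\subset A$. Given $a\in C$ and $t_0\in T$, $\varepsilon>0$, pick $\kappa\in\cF(\Omega_0)$ and an open $U\ni t_0$ with $\|a(t)-\kappa(t)\|<\varepsilon$ on $U$. Write $\kappa=\sum_j\Theta_{\omega_j,\nu_j}$ with $\omega_j,\nu_j\in\Omega_0$; shrinking $U$, we may assume the finitely many vectors $\{\omega_j(t),\nu_j(t)\}$ span a subspace of fixed dimension on $U$ and choose $\omega_1',\dots,\omega_m'\in\Omega$ that are linearly independent on $U$ with span containing all $\omega_j(t),\nu_j(t)$. Let $p_t$ be the projection onto $\mathrm{Span}\{\omega_i'(t)\}$; then $p_t\kappa(t)p_t=\kappa(t)$, so $\|p_ta(t)p_t-a(t)\|<3\varepsilon$ on $U$ (using $\|p_t\|\le1$), giving the almost-finite-dimensionality condition (b) with condition (a) built in. Weak continuity of $a$ against a pair $\omega_1,\omega_2\in\Omega$ is local and follows because $t\mapsto\langle\kappa(t)\omega_1(t),\omega_2(t)\rangle$ is continuous for $\kappa\in\cF(\Omega_0)$ (it is a finite sum of products of functions of the form $\check{}$-type inner products, each continuous by axiom (III) and the polarisation identity), and the uniform estimate transfers continuity to $a$.

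For the reverse inclusion $A\subset C$, take $a\in A$, $t_0\in T$, $\varepsilon>0$, and use almost finite-dimensionality to get an open $U\ni t_0$ and $\omega_1,\dots,\omega_n\in\Omega$, linearly independent on $U$, with $\|p_ta(t)p_t-a(t)\|<\varepsilon$ on $U$, where $p_t$ is the projection onto their span. Shrinking $U$ and applying Gram--Schmidt to $\omega_1,\dots,\omega_n$ produces, after multiplying by a cutoff function from $C_0(T)$ supported near $t_0$, an orthonormal frame $e_1,\dots,e_n\in\Omega_0$ on a smaller neighbourhood. Then $p_t=\sum_i\Theta_{e_i,e_i}(t)$ and $p_ta(t)p_t=\sum_{i,k}\langle a(t)e_k(t),e_i(t)\rangle\,\Theta_{e_i,e_k}(t)$. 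The scalar coefficients $f_{ik}(t)=\langle a(t)e_k(t),e_i(t)\rangle$ are continuous on this neighbourhood by weak continuity of $a$, and after a further cutoff they lie in $C_0(T)$; since $\cF(\Omega_0)$ is a $C_0(T)$-module, $\kappa=\sum_{i,k}f_{ik}\cdot\Theta_{e_i,e_k}\in\cF(\Omega_0)$, and $\|a(t)-\kappa(t)\|<\varepsilon$ on the neighbourhood. Hence $a$ is continuous with respect to $\cF(\Omega_0)$, and $\check a\in C_0(T)$ by hypothesis, so $a\in C$.

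The main obstacle is the bookkeeping around passing from the given vector fields in $\Omega$ to an \emph{orthonormal} local frame in $\Omega_0$: one must verify that Gram--Schmidt can be carried out continuously and locally (the inner products $\langle\omega_i,\omega_j\rangle$ are continuous by axiom (III), so the Gram matrix is continuous and invertible near $t_0$, hence its inverse square root is continuous there), and that the resulting frame vectors, after multiplication by a suitable bump function, genuinely belong to $\Omega_0$ rather than merely to $\Omega$ — this uses that $\Omega$ is a $C(T)$-module (axiom (I)) closed under local uniform limits (axiom (IV)), so $\Omega_0=\{\omega\in\Omega:\check\omega\in C_0(T)\}$ contains all such compactly-cutoff combinations. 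Everything else is a routine $\varepsilon/3$ argument.
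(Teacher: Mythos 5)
Your second inclusion ($A\subset C$) is essentially the paper's own argument (almost finite-dimensionality, Gram--Schmidt plus a cutoff to obtain an orthonormal frame in $\Omega_0$, weak continuity to make the coefficients $f_{ik}$ continuous, and the module structure of $\cF(\Omega_0)$), and it is fine. The genuine gap is in your proof of $C\subset A$: you claim that, after shrinking $U$, the finitely many vectors $\omega_j(t),\nu_j(t)$ span a subspace of \emph{fixed} dimension on $U$, and that one can choose $\omega_1',\dots,\omega_m'\in\Omega$ linearly independent on $U$ whose span contains all of them, so that $p_t\kappa(t)p_t=\kappa(t)$ \emph{exactly}. This fails in general. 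The dimension of ${\rm Span}\,\{\omega_j(t),\nu_j(t)\}$ is only lower semicontinuous, and at a point where a field degenerates no such exact frame need exist: take $T=[0,1]$, $H_t=\ell^2$ for all $t$, $\Omega$ the set of all norm-continuous fields, and $\omega\in\Omega_0$ with $\omega(0)=0$, $\|\omega(t)\|\le t$, and $\omega(1/n)$ proportional to $e_n$ (the direction wanders through an orthonormal sequence as $t\to 0$). If $\omega_1',\dots,\omega_m'$ were linearly independent near $0$ with $\omega(t)\in{\rm Span}\,\{\omega_i'(t)\}$ for all small $t$, then writing $e_n=\sum_i c_i^{(n)}\omega_i'(1/n)$ the coefficients are bounded (the Gram matrices are uniformly invertible near $0$), so along a subsequence the $e_n$ would converge in norm, a contradiction. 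Hence for $\kappa=\Theta_{\omega,\nu}$ your exact compression cannot be arranged at $t_0=0$, and the preliminary reduction to constant rank is already impossible there.

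The repair is to drop the insistence on exact compression. Near a degeneracy the offending rank-one pieces have small norm (above, $\|\Theta_{\omega(t),\nu(t)}\|=\|\omega(t)\|\,\|\nu(t)\|\to 0$), so an approximate compression suffices; and once one knows $\cF(\Omega_0)\subset A$, the whole direction is a triangle inequality: if $\|a(t)-\kappa(t)\|<\varepsilon$ on $U$ and, by the almost finite-dimensionality of $\kappa$ itself, $\|q_t\kappa(t)q_t-\kappa(t)\|<\varepsilon$ on a smaller neighbourhood, then $\|q_t a(t)q_t-a(t)\|<3\varepsilon$ there, while weak continuity clearly passes to local uniform limits. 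This is precisely how the paper handles this inclusion in one line ($\cF(\Omega_0)\subset A$ and $A$ is closed under local uniform approximation); apart from this step your argument tracks the paper's proof.
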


\begin{proof} By construction,
$\cF(\Omega_0)\subset A$. Therefore, since $A$ is closed under local uniform approximation,  $C\subset A$.
Conversely, assume $a\in A$. Let $t\in T$ and $\varepsilon>0$. Thus, there exist an open set $V\subset T$ containing $t$
and $\omega_i\in \Omega$, $1\leq i\leq n$, such that, for every $s\in V$, the set of vectors
 $\{\omega_i(s)\}_{i=1}^n$ is a linearly independent set
and $\|a(s)-p_s \, a(s)\, p_s\|<\varepsilon$, where $p_s\in B(H_s)$ denotes the
projection onto $\mbox{Span}\,\{\omega_i(s):\, 1\leq i\leq n\}$.
Via the Gram--Schmidt process \cite[Lemma 4.2]{fell1961}, we may assume that the vectors  $\omega_i(s)$, $1\leq i\leq n$,
are pairwise orthogonal for every $s$ in some open set $U\subset V$ containing $t$. By Urysohn's Lemma,
we can also assume that each $\omega_i\in \Omega_0$.

For each $1\leq i,j\leq n$, let $f_{ij}(t)=\langle a(t)\omega_i(t),\omega_j(t)\rangle$, $t\in T$.
Thus,  $f_{ij}\in C_0(T)$ and so $f_{ij}\cdot\omega_i\in \Omega_0$ for all $i,j$. Now note that
\[
p_s \,a(s)\, p_s\;=\;\sum_{i,\,j=1}^n \langle a(s)\,\omega_j(s),\omega_i(s)\rangle\ \Theta_{\omega_i,\,\omega_j}(s)
\;\in\, \mathcal F(\Omega_0)\,,\quad\forall\,s\in U\,.
\]
Hence, $a$ is continuous with respect to $\mathcal F(\Omega_0)$, which proves that $A\subset C$.
\end{proof}

The previous result implies the following convenient description of the multiplier algebras of essential ideals.
If $I$ is an ideal of $A$, then  by Lemma \ref{C=A}, the spatial continuous trace C$^*$-algebra $A^I$ is given
by $A^I=C_0(\beta X^I,\{K(H)_t^I)\}_{t\in \beta X^I},\cF(\Omega^I))$. In viewing $A^I$ in this way,
the ideal $\delta_I(I)$ is given by
\[
I\;\cong\; \delta_I(I)\;=\;C_0 \left(X^I,\{K(H^I_t)\}_{t\in X^I},\cF((\Omega^I{}_{\vert X^I})_0) \right)\,.
\]
In this framework,
$x\in M(I)$ if and only if $x$ is a bounded cross
section of the fibred space $(X^I,\{B(H^I_t)\}_{t\in X^I})$ for which $x$
is \emph{strictly continuous} with respect to $\cF((\Omega^I{}_{\vert X^I})_0)$
\cite[Theorem 3.3]{akemann--pedersen--tomiyama1973}.
That is, for each $t_0\in X^I$, $a\in \cF((\Omega^I{}_{\vert X^I})_0)$, and $\varepsilon>0$ there are an open set
$U\subset X^I$ containing $t_0$ and $b\in \cF((\Omega^I{}_{\vert X^I})_0)$ such that
\[
\|\left( x(t)-b(t)\right)a(t)\|\,+\,\|a(t)\left( x(t)-b(t)\right)\|\;<\;\varepsilon\,,\;\mbox{ for all }\;t\in U\,.
\]
We summarise this fact in the next proposition.

\begin{proposition}\label{mult alg} If $I$ is an essential ideal of $A$, then
$x\in M(I)$ if and only if
$x$ is a bounded cross
section of the fibred space $(X^I,\{B(H^I_t)\}_{t\in X^I})$ such that $x$
is strictly continuous with respect to $\cF((\Omega^I{}_{\vert X^I})_0)$.
\end{proposition}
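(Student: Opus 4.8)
The plan is to obtain Proposition \ref{mult alg} essentially as a direct transcription of the Akemann--Pedersen--Tomiyama characterisation of the multiplier algebra of a C$^*$-algebra of ``continuous'' cross sections of a field of elementary C$^*$-algebras, once the ideal $\delta_I(I)$ has been written in the form $C_0\left(X^I,\{K(H^I_t)\}_{t\in X^I},\cF((\Omega^I{}_{\vert X^I})_0)\right)$. So the first step is to recall, via Proposition \ref{essen emb}, that $\delta_I$ identifies $I$ with the essential ideal $\mathfrak I$ of $A^I$ consisting of those operator fields vanishing on $\beta X^I\setminus X^I$; and to recall, via Lemma \ref{C=A} applied to the bundle $(\beta X^I,\{H^I_t\}_{t\in\beta X^I},\Omega^I)$, that $A^I=C_0(\beta X^I,\{K(H^I_t)\}_{t\in\beta X^I},\cF(\Omega^I))$. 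Restricting attention to $X^I$ as an open dense subset of $\beta X^I$, one checks that $\mathcal F(\Omega^I)$ restricted to $X^I$ is exactly $\cF((\Omega^I{}_{\vert X^I})_0)$ and that an operator field on $X^I$ which is a local uniform limit of elements of $\cF((\Omega^I{}_{\vert X^I})_0)$ and vanishes at infinity extends (uniquely) to an element of $A^I$ vanishing off $X^I$; this gives the displayed description $\delta_I(I)=C_0(X^I,\{K(H^I_t)\}_{t\in X^I},\cF((\Omega^I{}_{\vert X^I})_0))$.

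Next I would invoke \cite[Theorem 3.3]{akemann--pedersen--tomiyama1973}: for an algebra of the form $C_0(Y,\{K(H_y)\}_y,\cF_0)$ — a C$^*$-algebra of continuous, vanishing-at-infinity cross sections of a field of compact-operator algebras — the multiplier algebra consists precisely of the bounded cross sections of $(Y,\{B(H_y)\}_y)$ that are \emph{strictly} continuous with respect to the generating $*$-subalgebra $\cF_0$, where strict continuity means the local two-sided approximation condition
\[
\|\left(x(t)-b(t)\right)a(t)\|+\|a(t)\left(x(t)-b(t)\right)\|<\varepsilon
\]
displayed just before the proposition. To apply this I must check that the hypotheses of that theorem are met by $C_0(X^I,\{K(H^I_t)\}_{t\in X^I},\cF((\Omega^I{}_{\vert X^I})_0))$: that $\cF((\Omega^I{}_{\vert X^I})_0)$ is a $*$-subalgebra acting fibrewise with dense range in each $K(H^I_t)$, and that $\check{\kappa}\in C_0(X^I)$ for every such $\kappa$. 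These are exactly properties (i)–(iii) recorded above for $\cF(\Omega_0)$, here applied to the bundle $(X^I,\{H^I_t\}_{t\in X^I},\Omega^I{}_{\vert X^I})$; density of the fibres follows from axiom (II) of the continuous Hilbert bundle $(\beta X^I,\{H^I_t\},\Omega^I)$ restricted to $X^I$. Then the proposition follows by transporting the characterisation along the isomorphism $\delta_I:I\xrightarrow{\ \cong\ }\delta_I(I)$ and using that multiplier algebras are functorial with respect to isomorphisms of essential ideals.

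The one genuinely delicate point — and the step I expect to be the main obstacle — is verifying that the algebra $\delta_I(I)$ really does satisfy the precise standing hypotheses of \cite[Theorem 3.3]{akemann--pedersen--tomiyama1973}, in particular that the field of elementary C$^*$-algebras $\{K(H^I_t)\}_{t\in X^I}$ together with $\cF((\Omega^I{}_{\vert X^I})_0)$ forms a ``continuous field'' in the sense required there (so that the cited theorem's identification of $M(\cdot)$ with the strictly continuous bounded cross sections applies verbatim), and that restricting the bundle $\Omega^I$ from $\beta X^I$ to the open dense set $X^I$ does not disturb properties (II)–(IV). Here I would lean on Lemma \ref{C=A} (which already establishes that the Fell-type algebra and the ``continuous-cross-section'' algebra coincide for an arbitrary continuous Hilbert bundle, hence in particular for $(X^I,\{H^I_t\}_{t\in X^I},\Omega^I{}_{\vert X^I})$) and on Proposition \ref{prop: ext chb} together with Proposition \ref{essen emb}(2)–(3). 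Once these compatibilities are in hand the conclusion is immediate, so the proof is short: \textbf{set up} the identification of $I$ with $C_0(X^I,\{K(H^I_t)\}_{t\in X^I},\cF((\Omega^I{}_{\vert X^I})_0))$, \textbf{cite} the Akemann--Pedersen--Tomiyama description of its multiplier algebra, and \textbf{conclude}.
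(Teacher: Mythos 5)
Your proposal matches the paper's own treatment: the paper likewise uses Proposition \ref{essen emb} and Lemma \ref{C=A} to identify $I\cong\delta_I(I)=C_0\left(X^I,\{K(H^I_t)\}_{t\in X^I},\cF((\Omega^I{}_{\vert X^I})_0)\right)$ and then quotes \cite[Theorem 3.3]{akemann--pedersen--tomiyama1973} to describe $M(I)$ as the bounded, strictly continuous cross sections of $(X^I,\{B(H^I_t)\}_{t\in X^I})$. Your extra verification of the Akemann--Pedersen--Tomiyama hypotheses is a sound elaboration of what the paper leaves implicit, so the argument is correct and essentially the same.
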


\begin{theorem}\label{k-omega} There exists a monomorphism $\gamma: K(\Omega^\Delta)\rightarrow \mloc(A)$
\end{theorem}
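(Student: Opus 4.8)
The plan is to produce the monomorphism $\gamma\colon K(\Omega^\Delta)\to\mloc(A)$ by assembling it from the pieces already constructed in the earlier sections. By Proposition~\ref{they are equal} we have $K(\Omega^\Delta)=A_\Delta=\lim_\rightarrow A^I$, so it suffices to build compatible monomorphisms $\gamma_I\colon A^I\to\mloc(A)$ that respect the connecting maps $\pi_{JI}$ from Proposition~\ref{systems2}; the universal property of the direct limit then yields $\gamma$. The natural candidate for $\gamma_I$ is $\gamma_I=\varrho_{\text{something}}\circ\delta_I^{-1}$ composed with a multiplier extension: more precisely, for each essential ideal $I$ of $A$, Proposition~\ref{essen emb} identifies $I$ with the essential ideal $\delta_I(I)=\mathfrak I$ of $A^I$. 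Since $A^I$ acts as multipliers of $\mathfrak I$, the inclusion $A^I\hookrightarrow M(\mathfrak I)\cong M(I)$ gives a monomorphism $A^I\to M(I)$, and then composing with the canonical map $M(I)\to\mloc(A)=\lim_\rightarrow M(J)$ produces $\gamma_I\colon A^I\to\mloc(A)$.

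The first step I would carry out is to make the identification $A^I\hookrightarrow M(\delta_I(I))$ precise. One shows that $\delta_I(I)=\mathfrak I$ is an essential ideal of $A^I$ (this is exactly part (1) of Proposition~\ref{essen emb}), and that every element of $A^I$ multiplies $\mathfrak I$ into itself; since $\mathfrak I$ is essential in $A^I$ and $A^I$ is a C$^*$-algebra containing $\mathfrak I$ as an essential ideal, the canonical map $A^I\to M(\mathfrak I)$ is an injection. Using the identification $M(\mathfrak I)\cong M(\delta_I(I))\cong M(I)$ coming from the isomorphism $\delta_I\colon I\to\delta_I(I)$, we obtain the desired monomorphism $\tau_I\colon A^I\to M(I)$, and one checks $\tau_I$ restricted to $\delta_I(I)$ is just $\delta_I^{-1}$ followed by $\iota_I$.

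The second step is the compatibility check: for $J\preccurlyeq I$ (i.e.\ $I\subset J$) I must verify that the diagram formed by $\pi_{JI}\colon A^J\to A^I$, $\tau_J\colon A^J\to M(J)$, $\tau_I\colon A^I\to M(I)$, and $\varrho_{JI}\colon M(J)\to M(I)$ from \eqref{ma-incl} commutes, i.e.\ $\tau_I\circ\pi_{JI}=\varrho_{JI}\circ\tau_J$. This reduces, because all maps are $*$-homomorphisms and both sides agree on the essential ideal $\delta_J(J)$ after unravelling definitions, to checking agreement on a dense ideal; the key input is the relation $\delta_I=\pi_{JI}\circ\delta_J|_I$ established in Proposition~\ref{systems2} together with the defining property \eqref{ma-incl} of $\varrho_{JI}$. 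Once this is verified, the composites $M(I)\to\mloc(A)$ (the canonical maps into the direct limit, which are compatible with the $\varrho_{JI}$ by construction of $\mloc(A)$) glue the $\tau_I$ into a single compatible family $\gamma_I\colon A^I\to\mloc(A)$, and the direct limit universal property gives $\gamma\colon\lim_\rightarrow A^I=K(\Omega^\Delta)\to\mloc(A)$.

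The main obstacle I expect is injectivity of the limiting map $\gamma$: a direct limit of monomorphisms is automatically a monomorphism only because the bonding maps are isometric, and one must ensure that the norms behave correctly, i.e.\ that $\gamma$ is isometric on the dense subalgebra $\bigcup_I\gamma_I(A^I)$. Since each $\gamma_I$ is an isometric $*$-homomorphism (being the composite of the isometric inclusion $A^I\hookrightarrow M(I)$, which is isometric because $\delta_I(I)$ is essential in $A^I$, with the isometric canonical map into the C$^*$-direct limit $\mloc(A)$), and since the family is compatible, the induced map on the algebraic direct limit is isometric and hence extends to an isometric---in particular injective---$*$-homomorphism on the completion $K(\Omega^\Delta)$. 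The only genuinely delicate point is confirming that $A^I\to M(\delta_I(I))$ really is isometric, which follows from the general fact that a C$^*$-algebra embeds isometrically into the multiplier algebra of any of its essential ideals; this is where Proposition~\ref{essen emb}(1) does the essential work.
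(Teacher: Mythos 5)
Your proposal is correct in substance and shares the paper's skeleton---embed each $A^I$ into $M(I)$ using that $\delta_I(I)$ is an essential ideal of $A^I$ (Proposition~\ref{essen emb}), check compatibility with the connecting maps via $\delta_I=\pi_{JI}\circ\delta_J{}_{\vert I}$ (Proposition~\ref{systems2}), and pass to the direct limit $K(\Omega^\Delta)=A_\Delta=\lim_\rightarrow A^I$---but you implement the crucial middle step differently. The paper does not work with the abstract connecting maps $\varrho_{JI}$ of \eqref{ma-incl} directly: it uses the concrete description of $M(I)$ as bounded strictly continuous operator fields (Proposition~\ref{mult alg}, via Lemma~\ref{C=A}), constructs explicit maps $\tilde\pi_{JI}(x)=x\circ\Phi_{JI}{}_{\vert X^I}$, verifies the square \eqref{compat1}, forms the direct limit $N$ of the system $\{M(I)\}$ along the $\tilde\pi_{JI}$, and only then identifies $\tilde\pi_{JI}$ with $\varrho_{JI}$ (hence $N$ with $\mloc(A)$) by the universal property of multiplier algebras. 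Your route is more abstract and shorter for this theorem, since it never needs the field-theoretic picture of multipliers; what the paper's concrete construction buys is the explicit maps $\tilde\pi_{JI}$ themselves, which are reused later (notably in the proof of Theorem~\ref{mloc to ie1}).

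One step in your compatibility check needs tightening: two $*$-homomorphisms out of $A^J$ that agree on an essential ideal need not coincide in general (e.g.\ evaluation at $0$ and the zero functional on $C[0,1]$ agree on the essential ideal of functions vanishing at $0$), so ``agreement on a dense ideal'' is not by itself enough. The correct argument is: both $\tau_I\circ\pi_{JI}$ and $\varrho_{JI}\circ\tau_J$ send $\delta_J(a)$ to $\iota_I(a)$ for $a\in I$ (this is exactly $\delta_I=\pi_{JI}\circ\delta_J{}_{\vert I}$ combined with \eqref{ma-incl}); then for $b\in A^J$ the difference of the two images of $b$ annihilates $\iota_I(I)$, and since $\iota_I(I)$ is an \emph{essential} ideal of $M(I)$ this forces the difference to be zero. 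With that nondegeneracy point supplied, your construction and the isometry/injectivity argument go through.
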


\begin{proof} We shall exploit the fact that $A_\Delta=K(\Omega^\Delta)$ (Proposition \ref{they are equal}).
Fix
$J\in \mathcal I_{\rm ess}(A)$ and let
$\gamma_J:A^J\rightarrow M(J)$ be the canonical embedding of $A^J$ into $M(J)$,
using the fact that $J\cong\delta_K(J)$
is an essential ideal of $A^J$. Recall that, by Lemma \ref{C=A}, that
\[
J\;\cong\; \delta_J(J)\;=\;C_0 \left(X^J,\{K(H^J_t)\}_{t\in X^J},\cF((\Omega^J{}_{\vert X^J})_0) \right)
\]
and, by Proposition \ref{mult alg},
$x\in M(J)$ if and only if $x$ is a bounded cross
section of the fibred space $(X^J,\{B(H^J_t)\}_{t\in X^J})$ which
is strictly continuous  with respect to $\cF((\Omega^J{}_{\vert X^J})_0)$.

Suppose now that $J \preccurlyeq I$ and let $x\in M(J)$. Because $\Phi_{JI}\circ \iota_I=\iota_J{}_{|X^I}$, $x\circ\Phi_{JI}{}_{\vert X^I}$
is a well defined bounded section, which we denote by $\tilde x$, of the fibred space $(X^I,\{B(H^I_t)\}_{t\in X^I})$.
Select $t_0\in X^I$ and $\varepsilon>0$.
Let $s_0\in\Phi_{JI}\in X^J$. Since $x\in M(J)$,
there are an open set $V\subset X^J$ containing $s_0$ and $a,b\in \cF((\Omega^J{}_{\vert X^J})_0) $ such that
\[
\|\left( x(s)-b(s)\right)a(s)\|\,+\,\|a(s)\left( x(s)-b(s)\right)\|\;<\;\varepsilon\,,\;\mbox{ for all }\;s\in V\,.
\]
Let $U=\Phi_{JI}^{-1}(V)$ and observe that $\pi_{JI}(a),\pi_{JI}(b)\in \cF((\Omega^IJ{}_{\vert X^I})_0)$. Hence, the
pull back to $X^I$ of the inequality above holds for $\tilde x$ in $U$ and, thus, $\tilde x\in M(I)$.

Define $\tilde\pi_{JI}:M(J)\rightarrow M(I)$ by $\pi_{JI}(x)=x\circ\Phi_{JI}{}_{\vert X^I}$. Thus $\tilde\pi_{JI}$ is a homomorphism
and satisfies the commutative diagram
 \begin{equation}\label{compat1}
\begin{CD}
A^J  @>{\pi_{JI}}>> A^I  \\
@V{\gamma_J}VV           @VV{\gamma_I}V  \\
M(J)  @>>{\tilde\pi_{JI}}> M(I)
\end{CD}\,.
\end{equation}
If $\tilde\pi_{JI}(x)=0$, then $x(s)=0$ for all $s\in \Phi_{JI}(X^I)=X^J$, whence $x=0$. Therefore, $\tilde\pi_{JI}$ is a monomorphism. Hence,
$( \mathcal I_{\rm ess}(A), \{A^I\}_{I}, \{\pi_{JI}\}_{J \preccurlyeq I})$ is a subsystem of  $( \mathcal I_{\rm ess}(A), \{M(I)\}_{I}, \{\tilde\pi_{JI}\}_{J \preccurlyeq I})$. Let
$N$ denote the direct limit of the
direct system $( \mathcal I_{\rm ess}(A), \{M(I)\}_{I}, \{\tilde\pi_{JI}\}_{J \preccurlyeq I})$.
The previous facts imply that there is a
monomorphism $\gamma:A_\Delta\rightarrow N$ such that
\[
\begin{CD}
A^I  @>{\pi_{I}}>> A_\Delta \\
@V{\gamma_I}VV           @VV{\gamma}V  \\
M(I)  @>>{\tilde\pi_{I}}> N
\end{CD}
\]
is a commutative diagram for all $I\in \mathcal I_{\rm ess}(A)$, where $\pi_I$ and $\tilde\pi_I$ are the embeddings of $A^I$
and $M(I)$ into their respective direct limits which satisfy $\pi_J=\pi_I\circ\pi_{JI}$ and $\tilde\pi_J=\tilde\pi_I\circ\tilde\pi_{JI}$ for all $J \preccurlyeq I$.

On the other hand, since $\delta_I=\pi_{JI}\circ\delta_J{}_{\vert I}$ (Proposition \ref{systems2}),
the commutativity of the previous diagram implies that $\tilde \pi_{JI}$ is the unique monomorphism induced
by the inclusion of essential ideals $\delta_J(I)\subset \delta_J(J)$, by the Universal Property of Multiplier Algebras. Therefore, $N$
and $\mloc(A)$ are canonically isomorphic and, thus, we may identify them.
\end{proof}

\begin{theorem}\label{a-komega-mloc inclusion}
There exists a monomorphism $\beta:A\rightarrow K(\Omega^\Delta)$ such that
\[
\begin{CD}
A @>\beta >> K(\Omega^\Delta)
@>\gamma>> \mloc(A)
\end{CD}
\]
is the canonical embedding of $A$ into its local multiplier algebra.
\end{theorem}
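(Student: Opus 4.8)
The plan is to produce the embedding $\beta$ concretely using the smallest essential ideal available, namely $A$ itself, and then to verify that $\gamma\circ\beta$ coincides with the canonical embedding $A\hookrightarrow\mloc(A)$. The natural candidate for $\beta$ is the map $a\mapsto a\circ\Phi_A$ arising from the least element $A\in\mathcal I_{\rm ess}(A)$ of the directed set: since $A$ is an essential ideal of itself, $X^A=T$, and by Proposition \ref{essen emb} (applied with $I=A$) we have $\delta_A=\mathrm{id}$, so $A=A^A=A(\beta T,\{H^A_t\}_{t\in\beta T},\Omega^A)$ in the sense that $A$ embeds as the essential ideal $\mathfrak I=\{y\in A^A: \check y\in C_0(T)\}$ of $A^A$. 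First I would set $\beta(a)=a\circ\Phi_A$, where $\Phi_A:\Delta\to\beta T$ is the canonical surjection; by the discussion preceding Proposition \ref{they are equal} this is a $*$-monomorphism of $A^A$ into $\bigsqcup_{s\in\Delta}B(H^\Delta_s)$, and by Proposition \ref{they are equal} its image lies in $K(\Omega^\Delta)=A_\Delta=A^\Delta$. Restricting to $A\cong\mathfrak I\subset A^A$ gives the desired monomorphism $\beta:A\to K(\Omega^\Delta)$.

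Next I would trace the composite $\gamma\circ\beta$ through the construction of $\gamma$ in Theorem \ref{k-omega}. Recall that $\gamma:A_\Delta\to N\cong\mloc(A)$ is the direct-limit map induced by the family of canonical embeddings $\gamma_I:A^I\to M(I)$, compatible with the direct systems $\{\pi_{JI}\}$ and $\{\tilde\pi_{JI}\}$; in particular $\gamma\circ\pi_I=\tilde\pi_I\circ\gamma_I$ for every $I$. Taking $I=A$, we get $\gamma\circ\beta=\gamma\circ\pi_A|_{\mathfrak I}=\tilde\pi_A\circ\gamma_A|_{\mathfrak I}$. Now $\gamma_A:A^A\to M(A)$ is the canonical embedding of $A^A$ into the multiplier algebra of its essential ideal $\mathfrak I\cong A$, so $\gamma_A|_{\mathfrak I}=\iota_A:A\to M(A)$ is the canonical embedding of $A$ into $M(A)$, and $\tilde\pi_A:M(A)\to N\cong\mloc(A)$ is by construction the canonical embedding of $M(A)$ as the $A$-th stage of the directed system defining $\mloc(A)$. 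Composing, $\gamma\circ\beta=\tilde\pi_A\circ\iota_A$ is exactly the canonical map $A\to M(A)\to\mloc(A)$, which is the embedding $\alpha$ of $A$ into its local multiplier algebra.

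The main obstacle I anticipate is the identification $\delta_A=\mathrm{id}$ together with the precise description of how $A$ sits inside $A^A$: one must check that the bundle $(\beta T,\{H^A_t\},\Omega^A)$ built in Section \ref{S:ideal} with $I=A$ genuinely recovers $A$ as the ideal $\mathfrak I$ corresponding to the dense open set $X^A=T\subset\beta T$, and that under this identification the restriction of the canonical embedding $A^A\hookrightarrow M(A)$ to $\mathfrak I$ is literally $\iota_A$. This is largely bookkeeping but needs care because $H^A_t\ne H_t$ in general for $t\in\beta T\setminus T$; however, by the identification $H^A_t=H_t$ for $t\in X^A=T$ established in Section \ref{S:ideal}, and the fact that all the relevant operator fields in $\mathfrak I$ vanish off $T$, the map $a\mapsto\beta(a)$ restricted to $T$ is just $a\mapsto a$, so no genuine analytic difficulty arises. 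The remaining verifications — that $\beta$ is isometric (immediate, since $\|\beta(a)\|=\max_{s\in\Delta}\|a(\Phi_A(s))\|=\max_{t\in\beta T}\|a(t)\|=\|a\|$ because $\Phi_A$ is surjective and $\check a$ extends continuously to $\beta T$ vanishing off $T$) and that $\beta$ is a $*$-homomorphism — are routine from the pointwise definition of operations. I would close by remarking that the uniqueness clause is automatic: the canonical embedding of $A$ into $\mloc(A)$ is unique, so once $\gamma\circ\beta$ is shown to equal it, the statement is proved.
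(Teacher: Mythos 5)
Your proposal is correct and follows essentially the same route as the paper: you define $\beta=\pi_A\circ\delta_A$ (your $a\mapsto a\circ\Phi_A$ after identifying $A$ with the ideal $\mathfrak I\subset A^A$) and verify $\gamma\circ\beta$ is canonical via the compatibility $\gamma\circ\pi_A=\tilde\pi_A\circ\gamma_A$ from Theorem \ref{k-omega} together with the Universal Property of Multiplier Algebras, which gives $\gamma_A\circ\delta_A=\iota_A$. The only quibble is the phrase ``$\delta_A=\mathrm{id}$'': since $X^A=T$ need not be compact, $\delta_A$ is the extension-by-zero embedding of $A$ as the essential ideal $\mathfrak I$ of $A^A$ rather than the identity, but you acknowledge and use exactly this identification, so the argument stands.
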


\begin{proof} Let $j:A\rightarrow M(A)$ denote the canonical embedding of $A$ into $M(A)$. Because
$\delta_A$ embeds $A$ as an essential ideal of $A^A$, the Universal Property of Multiplier Algebras
tells us that the homomorphism $\gamma_A:A^A\rightarrow M(A)$
in Theorem \ref{k-omega} is the unique embedding for which
\[
\begin{CD}
A   @>{\delta_A}>> A^A \\
@V{j}VV           @VV{\gamma_A}V  \\
M(A)  @ >>{\rm id}> M(A)
\end{CD}
\]
is a commutative diagram. Therefore,
by Proposition \ref{they are equal} and Theorem \ref{k-omega}, the diagram
\[
\begin{CD}
A @>\delta_A >> A^A @>\pi_A >> A_\Delta=K(\Omega^\Delta) \\
@V{j}VV           @V{\gamma_A}VV @VV{\gamma}V \\
M(A)  @ >>{\rm id}>M(A)@>>{\tilde\pi_A}> \mloc(A)\,.
\end{CD}
\]
is commutative. Thus,
\[
\begin{CD}
A @>\delta_A >> A^A
@>\gamma_A >> M(A)
@>\tilde\pi_A>> \mloc(A)
\end{CD}
\]
is a canonical embedding of $A$ into $\mloc(A)$. Therefore, if $\beta=\pi_A\circ\delta_A$, then
\[
\begin{CD}
A @>\beta >> K(\Omega^\Delta)
@>\gamma>> \mloc(A)
\end{CD}
\]
is also a canonical embedding of $A$ into $\mloc(A)$.
\end{proof}
\section{Main Results}\label{S:main results}

\subsection{Determination of the injective envelope}

To this point our analysis has made extensive use of continuous Hilbert bundles
for the study of $A$ and its essential ideals,
but for the determination of the injective envelope and local multiplier algebras
of $A$, a larger class of vector fields is required. We shall now draw upon our
work in the prequel \cite{argerami--farenick--massey2010}  to the present paper.

\begin{definition}\label{definition:weakly continuous}  
A vector field $\mu:\Delta\rightarrow\bigsqcup_{s\in\Delta},H_s^\Delta$ is said to be \emph{weakly continuous} with
respect to the continuous Hilbert bundle $(\Delta,\{H_s^\Delta\}_{s\in\Delta},\Omega^\Delta)$ if the function
\[
s\mapsto \langle \mu(s),\xi(s)\rangle
\]
is continuous for all $\xi\in\Omega^\Delta$.

If $\Omega_{\rm wk}^\Delta$
is the vector space of all weakly continuous vector fields with respect
to the bundle $(\Delta,\{H_s^\Delta\}_{s\in\Delta},\Omega^\Delta)$, then the quadruple
$(\Delta,\{H_s^\Delta\}_{s\in\Delta},\Omega^\Delta, \Omega_{\rm wk}^\Delta)$ is
called a \emph{weakly continuous Hilbert bundle}.
\end{definition}

\begin{definition} {\rm (\cite{kaplansky1953})}
A Hilbert C$^*$-module $E$ over an abelian AW$^*$-algebra $Z$ is called
a {\em Kaplansky--Hilbert module} if the following three
properties hold:
\begin{enumerate}
  \item\label{definition:Kaplansky-Hilbert module:1} if $c_i\cdot \nu=0$ for some family $\{c_i\}_i\subset Z$ of pairwise-orthogonal
                projections and $\nu\in E$, then also $c\cdot\nu=0$,
               where $c=\sup_i\,c_i$;
  \item\label{definition:Kaplansky-Hilbert module:2} if
$\{c_i\}_i\subset Z$ is a family of pairwise-orthogonal
projections such that $1=\sup_i\,c_i$, and if $\{\nu_i\}_i\subset
E$ is a bounded family, then there is a $\nu\in E$
such that $c_i\cdot\nu=c_i\cdot\nu_i$ for all $i$;
  \item if $\nu\in E$, then $g\cdot\nu=0$ for all $g\in Z$ only if $\nu=0$.
\end{enumerate}
\end{definition}

The element $\nu\in E$
described in (ii) will be denoted by
\begin{equation}\label{aw sum}
\nu\;=\;\sum_ic_i\cdot\nu_i\,.
\end{equation}

\begin{theorem}\label{wchb} {\rm (\cite{argerami--farenick--massey2010})} The vector space
$\Omega_{\rm wk}^\Delta$ is a Kaplansky--Hilbert module over the abelian AW$^*$-algebra $C(\Delta)$, where the
C$(\Delta)$-valued inner product $\langle\cdot,\cdot\rangle$ on  $\Omega_{\rm wk}^\Delta$ has the property that for every pair $\xi,\eta\in\Omega_{\rm wk}^\Delta$
there is a meagre subset $M_{\xi,\eta}\subset\Delta$ such that
\[
\langle\xi,\eta\rangle\,(s)\;=\;\langle\xi(s),\eta(s)\rangle\,,\quad\mbox{for all }s\in\Delta\setminus M_{\xi,\eta}\,.
\]
\end{theorem}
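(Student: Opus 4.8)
The plan is to build the $C(\Delta)$-valued inner product on $\Omega_{\rm wk}^\Delta$ fibrewise wherever that makes sense, and then use the AW$^*$-structure of $C(\Delta)$ and the Baire category theorem on the Stonean space $\Delta$ to patch the pointwise definition into a genuine element of $C(\Delta)$. More precisely, for $\xi,\eta\in\Omega_{\rm wk}^\Delta$ consider the bounded scalar function $s\mapsto\langle\xi(s),\eta(s)\rangle$ on $\Delta$. The first and main step is to show this function agrees off a meagre set with a (necessarily unique) continuous function on $\Delta$; this continuous function is then declared to be $\langle\xi,\eta\rangle$. To produce the continuous representative, I would first treat the case where one argument lies in $\Omega^\Delta$: if $\omega\in\Omega^\Delta$, then by Definition \ref{definition:weakly continuous} the function $s\mapsto\langle\xi(s),\omega(s)\rangle$ is already continuous, so there is nothing to do. For general $\eta\in\Omega_{\rm wk}^\Delta$ one approximates: using the Kaplansky--Hilbert gluing property (ii), $\eta$ can be written as an AW$^*$-sum $\sum_i c_i\cdot\omega_i$ with $\omega_i\in\Omega^\Delta$ over a partition of unity $\{c_i\}$ of $C(\Delta)$ by clopen projections, up to a controlled error; on the clopen set supporting $c_i$ the inner product $\langle\xi,\cdot\rangle$ is continuous, and the countably many exceptional meagre sets (one from each approximation step, plus the boundaries of the clopen pieces, which are nowhere dense in a Stonean space) union to a meagre set $M_{\xi,\eta}$. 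Off $M_{\xi,\eta}$ the pointwise formula defines a continuous function, and since $\Delta$ is Baire, a continuous function on a dense $G_\delta$ extends uniquely to a continuous function on all of $\Delta$: this is the desired $\langle\xi,\eta\rangle$.

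The second step is to verify that this $\langle\cdot,\cdot\rangle$ is a genuine $C(\Delta)$-valued pre-inner product: $C(\Delta)$-sesquilinearity, conjugate symmetry, and positivity all follow from the corresponding pointwise identities, which hold off a meagre set, hence everywhere by density and continuity; and $\langle\xi,\xi\rangle=0$ forces $\xi(s)=0$ off a meagre set, which one upgrades to $\xi=0$ by showing that a weakly continuous vector field vanishing off a meagre set is the zero vector field (pair against elements of $\Omega^\Delta$ and use density of $\{\omega(s)\}$ in $H_s^\Delta$). Completeness of $\Omega_{\rm wk}^\Delta$ in the induced norm is then checked directly: a Cauchy sequence converges pointwise off a meagre set to a bounded vector field, and weak continuity is preserved in the limit because each pairing $s\mapsto\langle\xi_n(s),\omega(s)\rangle$ converges uniformly to $s\mapsto\langle\xi(s),\omega(s)\rangle$.

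The third step is to check the three Kaplansky--Hilbert axioms. Axiom (iii) is exactly the faithfulness of the inner product just established. Axioms (i) and (ii) are the genuinely module-theoretic content. For (ii), given a partition of unity $\{c_i\}$ by clopen projections in $C(\Delta)$ and a bounded family $\{\nu_i\}\subset\Omega_{\rm wk}^\Delta$, one defines $\nu$ piecewise: $\nu(s)=\nu_i(s)$ when $s$ lies in the clopen set $E_i$ supporting $c_i$, and $\nu(s)=0$ on the nowhere-dense complement of $\bigcup_i E_i$ (this complement is nowhere dense because $\sup_i c_i=1$ in the AW$^*$-algebra $C(\Delta)$, which translates to $\overline{\bigcup E_i}=\Delta$). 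Boundedness of $\nu$ is immediate; weak continuity is checked by pairing against $\omega\in\Omega^\Delta$ and observing that $s\mapsto\langle\nu(s),\omega(s)\rangle$ is continuous on each clopen $E_i$ and, being locally constant-in-index on a dense open set, extends continuously. Axiom (i) is proved by a similar reduction, using that if $c_i\cdot\nu=0$ then $\nu$ vanishes off $E_i^\perp$, so $\nu$ vanishes off $\overline{\bigcup_i E_i^\perp}$, which is the support clopen set of $c=\sup_i c_i$; pairing against $\Omega^\Delta$ gives $c\cdot\nu=0$.

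The hard part will be the first step: establishing that the meagre exceptional set can always be chosen small enough, in particular handling the AW$^*$-sum $\eta=\sum_i c_i\cdot\omega_i$ over a possibly uncountable index set while keeping the exceptional set meagre. The resolution is that although the partition of unity may be uncountable, one only needs countably many pieces to approximate $\eta$ to within $1/n$ for each $n$ (by the defining property of the AW$^*$-sum together with boundedness), so only countably many clopen boundary sets and countably many approximation-error meagre sets enter, and a countable union of meagre sets in the Baire space $\Delta$ is meagre. This is precisely the point at which the Stonean (in particular, extremally disconnected and hence Baire) nature of $\Delta$, established earlier via $C(\Delta)=\mloc(C_0(T))$, is indispensable. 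I would cite \cite{argerami--farenick--massey2010} for the detailed category-theoretic bookkeeping, since the present statement is attributed there, and keep the exposition here at the level of indicating which structural facts ($\Delta$ Stonean, $\Omega^\Delta$ dense-in-the-appropriate-sense, AW$^*$-gluing) drive each axiom.
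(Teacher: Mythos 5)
First, note that the paper you are working from does not actually prove this statement: Theorem \ref{wchb} is imported verbatim from the prequel \cite{argerami--farenick--massey2010}, so there is no in-paper argument to match; your sketch has to stand on its own, and as it stands it has genuine gaps.

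The central one is circular. To produce the continuous representative of $s\mapsto\langle\xi(s),\eta(s)\rangle$ for a general $\eta\in\Omega_{\rm wk}^\Delta$ you invoke ``the Kaplansky--Hilbert gluing property (ii)'' to write $\eta$, up to a controlled error, as an AW$^*$-sum $\sum_i c_i\cdot\omega_i$ with $\omega_i\in\Omega^\Delta$. Axiom (ii) is one of the properties being established, and even granting it, it only says that a bounded family can be glued \emph{into} an element of the module; it does not say that an arbitrary weakly continuous field can be decomposed or approximated by AW$^*$-sums of \emph{continuous} fields. That approximation is precisely the hard technical content here (it is Proposition 4.4 of \cite{argerami--farenick--massey2010}, quoted later in this paper in the proof of Theorem \ref{mloc-2}), and it is normally proved \emph{after} the $C(\Delta)$-valued inner product exists, since already the meaning of ``controlled error'' requires either the module norm or a pointwise-off-meagre estimate that must itself be proved. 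So the reduction of the general case to the case $\eta\in\Omega^\Delta$ is unsupported; a correct construction works directly with $\langle\xi(s),\eta(s)\rangle$ (e.g.\ via polarization and upper envelopes of continuous functions, using Dixmier's lemma that on a Stonean space the lattice supremum in $C(\Delta)$ agrees with the pointwise supremum off a meagre set).

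Two further steps are wrong as stated. (a) ``A continuous function on a dense $G_\delta$ extends uniquely to a continuous function on all of $\Delta$'' because $\Delta$ is Baire is false in general (already on $[0,1]$); what is true and what these arguments actually use is extremal disconnectedness -- bounded continuous functions on dense \emph{open} subsets of a Stonean space extend continuously, equivalently $C(\Delta)$ is monotone complete -- so the Baire property alone cannot deliver your continuous representative. (b) In verifying axiom (ii) you define the glued field to be $0$ on the nowhere dense complement of $\bigcup_i E_i$; such a field is in general \emph{not} weakly continuous, because the continuous extension to $\Delta$ of $s\mapsto\langle\nu(s),\omega(s)\rangle$ from the dense open set $\bigcup_i E_i$ need not vanish on the residual set. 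One must instead define $\nu(s_0)$ at residual points $s_0$ as the vector representing the functional $\omega(s_0)\mapsto g_\omega(s_0)$, where $g_\omega\in C(\Delta)$ is that continuous extension (checking well-definedness and boundedness); the same issue appears, less visibly, in your completeness argument, where the limit field must be chosen on the exceptional meagre set so that all pairings remain continuous. The overall architecture (pointwise definition off meagre sets, Stonean patching, then the three axioms) is the right shape, but these three points are exactly where the real work lies, and they are the reason the result is a theorem of the prequel rather than a routine verification.
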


Kaplansky \cite{kaplansky1953} proved that the C$^*$-algebra of bounded adjointable endomorphisms
of a Kaplansky--Hilbert module  is an
AW$^*$-algebra of type I and Hamana \cite{hamana1981} proved that every type I AW$^*$-algebra is injective. Thus:

\begin{corollary}
The C$^*$-algebra of $B(\Omega_{\rm wk}^\Delta)$ of all bounded adjointable endomorphisms of $\Omega_{\rm wk}^\Delta$ is an injective
AW$^*$-algebra of type I.
\end{corollary}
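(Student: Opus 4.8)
The plan is to deduce the corollary directly from the two cited structural facts, so the proof is essentially a two-line citation wrapped around Theorem~\ref{wchb}. First I would invoke Theorem~\ref{wchb}: it tells us that $\Omega_{\rm wk}^\Delta$ is a Kaplansky--Hilbert module over the abelian AW$^*$-algebra $C(\Delta)$. Next I would apply Kaplansky's theorem from \cite{kaplansky1953}, which asserts that for any Kaplansky--Hilbert module $E$ over an abelian AW$^*$-algebra $Z$, the C$^*$-algebra $B(E)$ of bounded adjointable endomorphisms is an AW$^*$-algebra of type~I. Applying this with $E=\Omega_{\rm wk}^\Delta$ and $Z=C(\Delta)$ gives that $B(\Omega_{\rm wk}^\Delta)$ is a type~I AW$^*$-algebra. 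Finally I would cite Hamana's result \cite{hamana1981} that every type~I AW$^*$-algebra is injective, which yields the injectivity of $B(\Omega_{\rm wk}^\Delta)$ and completes the proof.

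There is genuinely no obstacle here: the statement is a formal corollary, and the only ``work'' is checking that the hypotheses of the two external theorems are exactly met. The single point one should be slightly careful about is that $C(\Delta)$ is indeed an abelian AW$^*$-algebra — but this was already recorded in the excerpt, since $C(\Delta)=\mloc(C_0(T))$ is the local multiplier algebra of an abelian C$^*$-algebra and hence an abelian AW$^*$-algebra by \cite[Proposition 3.4.5]{Ara--Mathieu-book}, and $\Delta$ being Stonean is the topological reflection of this fact. So all hypotheses are in place and the corollary follows immediately. I would therefore write the proof as a short paragraph that simply strings together Theorem~\ref{wchb}, \cite{kaplansky1953}, and \cite{hamana1981}, with a parenthetical reminder that $C(\Delta)$ is an abelian AW$^*$-algebra.

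In LaTeX the proof would read roughly:

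\begin{proof}
By Theorem~\ref{wchb}, $\Omega_{\rm wk}^\Delta$ is a Kaplansky--Hilbert module over the abelian AW$^*$-algebra $C(\Delta)$. By \cite{kaplansky1953}, the C$^*$-algebra $B(E)$ of bounded adjointable endomorphisms of any Kaplansky--Hilbert module $E$ over an abelian AW$^*$-algebra is an AW$^*$-algebra of type~I; hence $B(\Omega_{\rm wk}^\Delta)$ is a type~I AW$^*$-algebra. Finally, by \cite{hamana1981}, every type~I AW$^*$-algebra is injective, and therefore $B(\Omega_{\rm wk}^\Delta)$ is an injective AW$^*$-algebra of type~I.
\end{proof}

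Nothing beyond this is needed; the result is a direct application of the structure theory already assembled, and the ``hard part'' — establishing that $\Omega_{\rm wk}^\Delta$ is a Kaplansky--Hilbert module — was done in the prequel and is simply quoted as Theorem~\ref{wchb}.
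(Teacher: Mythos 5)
Your proposal is correct and follows exactly the paper's own argument: the paper deduces the corollary by combining Theorem~\ref{wchb} (that $\Omega_{\rm wk}^\Delta$ is a Kaplansky--Hilbert module over the abelian AW$^*$-algebra $C(\Delta)$) with Kaplansky's theorem that $B(E)$ is a type~I AW$^*$-algebra for such a module and Hamana's theorem that type~I AW$^*$-algebras are injective. Nothing further is needed.
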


To determine the injective envelope of $A$ we use the following criterion.
Recall that an embedding or inclusion of a C$^*$-algebra $B$ into an injective C$^*$-algebra $C$ is said be \emph{rigid} if the only
unital completely positive linear map $\varphi:C\rightarrow C$ that is the identity on $B$ is the map $\varphi={\rm id}_C$. In \cite{hamana1979b} Hamana shows that
a necessary and sufficient condition for an injective C$^*$-algebra $C$ to be an
injective envelope of one its C$^*$-subalgebras $B$ is that the inclusion $B\subset C$ be rigid.

\begin{theorem}\label{ie stonean} {\rm (\cite{argerami--farenick--massey2010})}
There exists a monomorphism $\alpha:A^\Delta\rightarrow B(\Omega_{\rm wk}^\Delta)$
such that:
\begin{enumerate}
\item $\alpha(a)\nu\,(s)=a(s)\nu(s)$, for every $a\in A^\Delta$, $\nu\in\Omega_{\rm wk}^\Delta$, $s\in\Delta$; and
\item $\alpha(A^\Delta)$ is a rigid C$^*$-subalgebra of $B(\Omega_{\rm wk}^\Delta)$.
\end{enumerate}
That is, $( B(\Omega_{\rm wk}^\Delta),\alpha)$ is an injective envelope of $A^\Delta$.
\end{theorem}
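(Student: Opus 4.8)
\medskip
\noindent\textbf{Proof proposal.}
The plan is to realise $A^\Delta=A(\Delta,\{H_s\}_{s\in\Delta},\Omega^\Delta)$ concretely as an algebra of operators on the Kaplansky--Hilbert module $\Omega_{\rm wk}^\Delta$ by pointwise multiplication of operator fields on vector fields, to check that the resulting representation $\alpha$ is a monomorphism, and then to establish rigidity of $\alpha(A^\Delta)$ inside $B(\Omega_{\rm wk}^\Delta)$. Since $B(\Omega_{\rm wk}^\Delta)$ is injective --- it is a type~I AW$^*$-algebra by Kaplansky \cite{kaplansky1953} and Hamana \cite{hamana1981}, as recorded after Theorem~\ref{wchb} --- Hamana's criterion \cite{hamana1979b} then identifies $(B(\Omega_{\rm wk}^\Delta),\alpha)$ as the injective envelope of $A^\Delta$. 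The key tool throughout is Theorem~\ref{wchb}: for $\xi,\eta\in\Omega_{\rm wk}^\Delta$ the $C(\Delta)$-valued inner product $\langle\xi,\eta\rangle$ agrees with the pointwise inner product $s\mapsto\langle\xi(s),\eta(s)\rangle$ off a meagre subset of $\Delta$; since $\Delta$ is compact Hausdorff, hence a Baire space, and since both of these functions are continuous when $\eta\in\Omega^\Delta$ (the first by Theorem~\ref{wchb}, the second by weak continuity of $\xi$), they in fact agree \emph{everywhere}, and more generally any continuous function on $\Delta$ that vanishes off a meagre set vanishes identically. This lets each pointwise identity between vector fields be promoted to an identity of $C(\Delta)$-valued inner products, and conversely.

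For $a\in A^\Delta$ define $\alpha(a)$ by $(\alpha(a)\nu)(s)=a(s)\nu(s)$, $\nu\in\Omega_{\rm wk}^\Delta$. First I would check $\alpha(a)\nu\in\Omega_{\rm wk}^\Delta$: because $a$ is weakly continuous and almost finite-dimensional, near any $s_0$ one has, after orthonormalising via Gram--Schmidt, $p_s\,a(s)\,p_s=\sum_{i,j}c_{ij}(s)\,\Theta_{\omega_i,\omega_j}(s)$ with $\omega_i\in\Omega^\Delta$, $c_{ij}(s)=\langle a(s)\omega_j(s),\omega_i(s)\rangle$ continuous, and $\|a(s)-p_s a(s)p_s\|<\varepsilon$ on a neighbourhood; hence $\langle a(s)\nu(s),\xi(s)\rangle$ differs by at most $\varepsilon\|\nu\|\,\|\xi\|$ from $\sum_{i,j}c_{ij}(s)\langle\nu(s),\omega_j(s)\rangle\langle\omega_i(s),\xi(s)\rangle$, which is continuous for each $\xi\in\Omega^\Delta$, so $s\mapsto\langle\alpha(a)\nu(s),\xi(s)\rangle$ is a local uniform limit of continuous functions, and $\alpha(a)\nu$ is bounded since $\|a(s)\nu(s)\|\le\|a\|\,\|\nu(s)\|$. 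Next, $\langle\alpha(a)\nu,\xi\rangle$ and $\langle\nu,\alpha(a^*)\xi\rangle$ agree off meagre sets with the continuous functions $s\mapsto\langle a(s)\nu(s),\xi(s)\rangle$ and $s\mapsto\langle\nu(s),a(s)^*\xi(s)\rangle$, which coincide pointwise; by the Baire argument the two $C(\Delta)$-valued inner products are equal, so $\alpha(a)$ is adjointable with $\alpha(a)^*=\alpha(a^*)$. Linearity, multiplicativity and the $*$-property are read off pointwise, so $\alpha:A^\Delta\to B(\Omega_{\rm wk}^\Delta)$ is a homomorphism satisfying (1). It is a monomorphism, since $\alpha(a)=0$ forces $a(s)\nu(s)=0$ for all $s$ and all $\nu\in\Omega^\Delta$, and $\{\nu(s):\nu\in\Omega^\Delta\}$ is dense in $H_s^\Delta$, whence $a(s)=0$ for every $s$ and $a=0$ because $\|a\|=\sup_s\|a(s)\|$.

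The substantial step is (2). Let $\varphi:B(\Omega_{\rm wk}^\Delta)\to B(\Omega_{\rm wk}^\Delta)$ be unital completely positive with $\varphi=\mathrm{id}$ on $\alpha(A^\Delta)$. Then $\alpha(A^\Delta)$ lies in the multiplicative domain of $\varphi$, so $\varphi$ is an $\alpha(A^\Delta)$-bimodule map. By Proposition~\ref{they are equal}, $A^\Delta=K(\Omega^\Delta)$, so for $\xi_i,\eta_i\in\Omega^\Delta$ the operators $\Theta_{\xi_i,\eta_i}$ lie in $A^\Delta$, and a direct computation on $\Omega_{\rm wk}^\Delta$ gives, for any $y\in B(\Omega_{\rm wk}^\Delta)$,
\[
\alpha(\Theta_{\xi_1,\eta_1})\,y\,\alpha(\Theta_{\xi_2,\eta_2})\;=\;\alpha\bigl(\Theta_{\langle y\xi_2,\eta_1\rangle\cdot\xi_1,\,\eta_2}\bigr),
\]
where $\langle y\xi_2,\eta_1\rangle\in C(\Delta)$ and $\langle y\xi_2,\eta_1\rangle\cdot\xi_1\in\Omega^\Delta$; in particular this sandwich lies in $\alpha(A^\Delta)$ and is fixed by $\varphi$. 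Combining this with the bimodule identity $\varphi(\alpha(\Theta_{\xi_1,\eta_1})y\alpha(\Theta_{\xi_2,\eta_2}))=\alpha(\Theta_{\xi_1,\eta_1})\varphi(y)\alpha(\Theta_{\xi_2,\eta_2})$ yields, with $z=\varphi(y)-y$,
\[
\alpha(\Theta_{\xi_1,\eta_1})\,z\,\alpha(\Theta_{\xi_2,\eta_2})\;=\;0\qquad\text{for all }\xi_i,\eta_i\in\Omega^\Delta.
\]
Applying the left side to $\zeta\in\Omega^\Delta$ gives $\langle\zeta,\eta_2\rangle\langle z\xi_2,\eta_1\rangle\cdot\xi_1=0$ in $\Omega_{\rm wk}^\Delta$, and pairing with itself gives $|\langle\zeta,\eta_2\rangle|^2\,|\langle z\xi_2,\eta_1\rangle|^2\,\langle\xi_1,\xi_1\rangle=0$ in $C(\Delta)$. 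Evaluating at any $s_0$ with $H_{s_0}^\Delta\neq\{0\}$ and using density of the evaluations to pick $\xi_1,\eta_2,\zeta\in\Omega^\Delta$ with $\xi_1(s_0)\neq0$ and $\langle\zeta(s_0),\eta_2(s_0)\rangle\neq0$ forces $\langle z\xi_2,\eta_1\rangle(s_0)=0$; when $H_{s_0}^\Delta=\{0\}$ this is automatic because $z\xi_2(s_0)=0$. Hence $\langle z\xi_2,\eta_1\rangle=0$ for all $\xi_2,\eta_1\in\Omega^\Delta$; since $z\xi_2$ is weakly continuous this gives $\langle z\xi_2(s),\eta_1(s)\rangle=0$ for every $s$, and density of the evaluations yields $z\xi_2=0$ for all $\xi_2\in\Omega^\Delta$. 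Running the same argument with $y^*$ in place of $y$ (using $\varphi(y^*)=\varphi(y)^*$) gives $z^*\xi_2=0$ for all $\xi_2\in\Omega^\Delta$, so $\langle z\zeta,\eta\rangle=\langle\zeta,z^*\eta\rangle=0$ for every $\zeta\in\Omega_{\rm wk}^\Delta$ and $\eta\in\Omega^\Delta$, whence $z\zeta=0$ and $\varphi=\mathrm{id}$. This is the rigidity of $\alpha(A^\Delta)$, and injectivity of $B(\Omega_{\rm wk}^\Delta)$ then gives the final assertion via Hamana's criterion.

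I expect the main difficulty to be the disciplined handling of the meagre exceptional sets throughout the rigidity argument: each time a pointwise identity about vector fields is converted into a statement about $C(\Delta)$-valued inner products, or vice versa, one invokes Theorem~\ref{wchb} together with the Baire property of $\Delta$, and one must ensure that the exceptional meagre sets never prevent evaluating at a prescribed point --- which is precisely why the conclusions are drawn at the level of continuous $C(\Delta)$-valued functions rather than pointwise. A secondary technical point is the verification that $\alpha(a)\nu$ is weakly continuous for every $a\in A^\Delta$, which is where the almost finite-dimensionality of the operator fields in $A^\Delta$ enters.
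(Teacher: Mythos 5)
The paper does not prove this statement at all: Theorem \ref{ie stonean} is imported verbatim from the prequel \cite{argerami--farenick--massey2010}, so there is no internal argument to compare against. What you have written is a self-contained reconstruction, and it is essentially sound and follows the route one would expect (and that the prequel takes in spirit): realise $A^\Delta=K(\Omega^\Delta)$ by pointwise multiplication on the Kaplansky--Hilbert module $\Omega_{\rm wk}^\Delta$, use weak continuity plus almost finite-dimensionality (after a local Gram--Schmidt) to see $\alpha(a)\nu\in\Omega_{\rm wk}^\Delta$, reconcile pointwise and $C(\Delta)$-valued inner products via Theorem \ref{wchb} together with the Baire property of $\Delta$, and then prove rigidity by the classical multiplicative-domain argument with rank-one ``compacts'' $\Theta_{\xi,\eta}$, $\xi,\eta\in\Omega^\Delta$ --- the same mechanism by which one shows $B(H)=I(K(H))$ --- before invoking injectivity of $B(\Omega_{\rm wk}^\Delta)$ (Kaplansky \cite{kaplansky1953}, Hamana \cite{hamana1981}) and Hamana's rigidity criterion \cite{hamana1979b}. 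The $z^*$-symmetrisation at the end, which upgrades $z\xi=0$ for $\xi\in\Omega^\Delta$ to $z\zeta=0$ for all $\zeta\in\Omega_{\rm wk}^\Delta$, is exactly the right way to bridge the gap between $\Omega^\Delta$ and its weak completion, since $\Omega^\Delta$ is not norm-dense in $\Omega_{\rm wk}^\Delta$.

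One step is stated too loosely, though it is easily repaired. When you deduce $\langle z\xi_2,\eta_1\rangle=0$ in $C(\Delta)$, the case $H_{s_0}^\Delta=\{0\}$ is not ``automatic because $z\xi_2(s_0)=0$'': the quantity to be killed at $s_0$ is the value of the \emph{module} inner product, which agrees with the pointwise one only off a meagre set, so a pointwise evaluation at a prescribed $s_0$ proves nothing there. The clean fix is to avoid isolating such points: from $|\langle\zeta,\eta_2\rangle|^2\,|\langle z\xi_2,\eta_1\rangle|^2\,\langle\xi_1,\xi_1\rangle=0$ take $\zeta=\eta_2=\xi_1=\eta_1$, pass (off a meagre set, then by continuity everywhere) to the continuous pointwise function $s\mapsto\|\eta_1(s)\|^6\,|\langle z\xi_2(s),\eta_1(s)\rangle|^2$, which therefore vanishes identically; at each $s$ either $\eta_1(s)=0$ or $\langle z\xi_2(s),\eta_1(s)\rangle=0$, and in both cases the pointwise pairing vanishes, so fibrewise density of $\{\eta_1(s):\eta_1\in\Omega^\Delta\}$ gives $z\xi_2=0$ directly. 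With that adjustment your argument is complete and consistent with the result quoted from \cite{argerami--farenick--massey2010}.
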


We now arrive at the first main result of the present paper. Recall, from
Theorem \ref{a-komega-mloc inclusion}, that there is a monomorphism
$\beta:A\rightarrow K(\Omega^\Delta)=A^\Delta$.

\begin{theorem}\label{inj env} $\left(B(\Omega_{\rm wk}^\Delta), \alpha\circ\beta \right)$ is an injective envelope for $A$.
\end{theorem}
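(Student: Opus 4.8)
The plan is to leverage the two results that are essentially handed to us: Theorem~\ref{ie stonean}, which says $(B(\Omega_{\rm wk}^\Delta),\alpha)$ is an injective envelope of $A^\Delta=K(\Omega^\Delta)$, and Theorem~\ref{a-komega-mloc inclusion}, which gives a monomorphism $\beta:A\rightarrow K(\Omega^\Delta)$ whose composition with the canonical maps realises the embedding of $A$ into $\mloc(A)$. Since $B(\Omega_{\rm wk}^\Delta)$ is already known to be an injective C$^*$-algebra, and $\alpha\circ\beta:A\rightarrow B(\Omega_{\rm wk}^\Delta)$ is a monomorphism (composition of two monomorphisms), the only thing that remains is to verify the \emph{minimality/rigidity} part of the definition of injective envelope: that no proper injective C$^*$-subalgebra of $B(\Omega_{\rm wk}^\Delta)$ contains $(\alpha\circ\beta)(A)$, or equivalently, by Hamana's criterion recalled just before Theorem~\ref{ie stonean}, that the inclusion $(\alpha\circ\beta)(A)\subset B(\Omega_{\rm wk}^\Delta)$ is rigid.

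The key step is therefore to upgrade the rigidity of $\alpha(A^\Delta)\subset B(\Omega_{\rm wk}^\Delta)$ (item (2) of Theorem~\ref{ie stonean}) to rigidity of the smaller subalgebra $(\alpha\circ\beta)(A)$. The mechanism for this is the general principle that rigidity passes down to \emph{essential} subalgebras: if $B_0\subset B_1\subset C$ with $C$ injective, $B_1\subset C$ rigid, and $B_0$ essential in $B_1$ (meaning any nonzero hereditary subalgebra, or any nonzero ideal in the relevant sense, meets $B_0$ — more precisely, the inclusion $B_0\subset B_1$ is such that every ucp self-map of $C$ fixing $B_0$ already fixes $B_1$), then $B_0\subset C$ is rigid as well. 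So the real content I would need is: $\beta(A)$ is an essential C$^*$-subalgebra of $K(\Omega^\Delta)=A^\Delta$ in a sense strong enough to force any ucp map $\varphi:B(\Omega_{\rm wk}^\Delta)\rightarrow B(\Omega_{\rm wk}^\Delta)$ fixing $\beta(A)$ to fix all of $\alpha(A^\Delta)$. Concretely, I would argue that $\beta(A)$ is an essential ideal of $K(\Omega^\Delta)$ — this should follow from the construction of $\beta=\pi_A\circ\delta_A$ in Theorem~\ref{a-komega-mloc inclusion} together with Proposition~\ref{essen emb}(1), since $\delta_A(A)$ is an essential ideal of $A^A$ and $\pi_A$ is the embedding into the direct limit; one checks that the image stays essential in $A_\Delta=K(\Omega^\Delta)$. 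Then a ucp map fixing an essential ideal of $K(\Omega^\Delta)$ fixes $K(\Omega^\Delta)$ (a Schwarz-inequality/multiplicative-domain argument: the fixed-point set is an operator system closed under the relevant products, and an element of an essential ideal that is fixed forces its ``multipliers'' to be fixed), hence fixes $\alpha(A^\Delta)$, and rigidity of the latter in $B(\Omega_{\rm wk}^\Delta)$ finishes the argument.

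An alternative, and perhaps cleaner, route avoids re-proving a rigidity-descent lemma by hand and instead invokes uniqueness of injective envelopes directly. The injective envelope of $A$, of $K(\Omega^\Delta)$, and of $\mloc(A)$ all coincide, because $A\subset K(\Omega^\Delta)\subset\mloc(A)$ are inclusions as (two-sided, essential) ideals up to the first one, and it is a standard fact \cite{hamana1979a,frank--paulsen2003} that a C$^*$-algebra and any of its essential ideals — and $\mloc(A)$, which sits between $A$ and $I(A)$ by \eqref{fp inclusion} — share the same injective envelope. Since Theorem~\ref{ie stonean} identifies $I(K(\Omega^\Delta))$ with $B(\Omega_{\rm wk}^\Delta)$ via $\alpha$, and $\beta:A\rightarrow K(\Omega^\Delta)$ is the canonical essential-ideal embedding (Theorem~\ref{a-komega-mloc inclusion}), functoriality of the injective envelope gives that $\alpha\circ\beta:A\rightarrow B(\Omega_{\rm wk}^\Delta)$ exhibits $B(\Omega_{\rm wk}^\Delta)$ as $I(A)$.

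I expect the main obstacle to be the bookkeeping in establishing that $\beta(A)$ is genuinely an essential ideal — or at least an essential subalgebra in Hamana's sense — of $A^\Delta=K(\Omega^\Delta)$, rather than merely of $A^A$. The subtlety is that $A_\Delta$ is a direct limit over \emph{all} essential ideals of $A$, so one must check that the image of $\delta_A(A)$, pushed forward by $\pi_A$, still has trivial annihilator in the limit; this is where Proposition~\ref{they are equal} (identifying $A_\Delta$ with $K(\Omega^\Delta)$) and the description of $\mloc(A)$ as built from idealizers of essential ideals are needed. Once essentiality is in hand, the rigidity-descent step and the appeal to uniqueness of the injective envelope are routine.
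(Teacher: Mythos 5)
There is a genuine gap, and it sits exactly where you predicted the ``main obstacle'' would be: the claim that $\beta(A)$ is an essential ideal of $K(\Omega^\Delta)=A^\Delta$ is false in general, and both of your routes lean on it. Already in the simplest instance of the setting, a bundle with one-dimensional fibres over $T=[0,1]$, one has $A=C([0,1])$ and $K(\Omega^\Delta)\cong C(\Delta)=\mloc\bigl(C([0,1])\bigr)$; here $\beta(A)$ is a \emph{unital} subalgebra containing the unit of $C(\Delta)$, so if it were an ideal it would be all of $C(\Delta)$, which fails (the class of $\sin(1/t)\in C_b((0,1))$ lies in $C(\Delta)$ but not in $C([0,1])$). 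The point is that $\delta_A(A)$ is an essential ideal of $A^A$ (Proposition \ref{essen emb}), but pushing forward by $\pi_A$ into the direct limit $A_\Delta$ destroys the ideal property: products with elements coming from $A^I$ for smaller essential ideals $I$ need not land back in $\beta(A)$. Consequently the ``rigidity-descent via multiplicative domains'' step of your first route has nothing to act on, and the ``a C$^*$-algebra and its essential ideals share an injective envelope'' justification in your second route does not apply to the inclusions $A\subset K(\Omega^\Delta)\subset\mloc(A)$, none of which are ideal inclusions.

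The descent direction is also the wrong one: what is actually available is trivial rigidity \emph{ascent}. The paper's argument is: by Theorem \ref{a-komega-mloc inclusion}, $\gamma\circ\beta$ is the canonical embedding of $A$ into $\mloc(A)$; by Frank's theorem, $I(\mloc(A))$ together with $\iota_{\rm mloc}\circ\gamma\circ\beta$ is an injective envelope of $A$, so one may write $I(A)=I(\mloc(A))$ and obtain the chain $A\subset K(\Omega^\Delta)\subset\mloc(A)\subset I(A)$. Since $A\subset I(A)$ is rigid and $K(\Omega^\Delta)\supset A$, any ucp self-map of $I(A)$ fixing $K(\Omega^\Delta)$ fixes $A$ and hence is the identity; thus $I(A)$ is an injective envelope of $K(\Omega^\Delta)$, with no essentiality-of-an-ideal claim needed anywhere. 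Uniqueness of injective envelopes then gives an isomorphism $\varphi:I(A)\rightarrow B(\Omega_{\rm wk}^\Delta)$ intertwining the two embeddings of $K(\Omega^\Delta)$ (Theorem \ref{ie stonean}), and composing with $\beta$ yields that $\bigl(B(\Omega_{\rm wk}^\Delta),\alpha\circ\beta\bigr)$ is an injective envelope of $A$. So the ingredients you cite are the right ones, but the bridge between them must be Frank's theorem plus rigidity ascent along the chain of Theorem \ref{a-komega-mloc inclusion}, not essentiality of $\beta(A)$ as an ideal of $K(\Omega^\Delta)$.
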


\begin{proof}
Theorem \ref{a-komega-mloc inclusion} asserts that
\[
\begin{CD}
A @>\beta >> K(\Omega^\Delta)
@>\gamma>> \mloc(A)
\end{CD}
\]
is a canonical embedding of $A$ into its local multiplier algebra.
Let $\iota_{\rm mloc}:\mloc(A)\rightarrow I\left(\mloc(A)\right)$
denote the canonical embedding of $\mloc(A)$ into its injective envelope.
By \cite[Theorem 5]{frank2002}, $\left( I(\mloc(A)), \iota_{\rm mloc}\circ\gamma\circ\beta\right)$ is an
injective envelope of $A$. Hence, by writing $I(A)=I\left(\mloc(A)\right)$, there exist embeddings
\begin{equation}\label{incl-1}
A\,\subset\,K(\Omega^\Delta)\, \subset \, \mloc (A)\, \subset \,I(A)\,,
\end{equation}
where the inclusions of $A$ into $\mloc(A)$ and $I(A)$ are the canonical inclusions.
Moreover, the inclusion of $K(\Omega^\Delta)$ into $I(A)$ is rigid because $K(\Omega^\Delta)$
contains $A$. Hence,
$\left(I(A), \kappa\right)$ is an injective envelope of $K(\Omega^\Delta)$, where $\kappa=\iota_{\rm mloc}\circ\gamma$.

If, for a given C$^*$-algebra $B$, $(C,\kappa)$ and $(\tilde C,\tilde\kappa)$ are two injective envelopes of $B$,
then there is an isomorphism $\varphi:C\rightarrow\tilde C$ such that $\varphi\circ\kappa=\tilde\kappa$ \cite[Theorem 4.1]{hamana1979a}.
Theorem \ref{ie stonean} asserts that $\left(B(\Omega_{\rm wk}^\Delta),\alpha\right)$ is an injective envelope of $K(\Omega^\Delta)$.
Hence,
\[
\begin{CD}
A @>\beta >> K(\Omega^\Delta) @>\iota_{\rm mloc}\circ\gamma >> I(A) \\
@.  @| @VV{\varphi}V \\
@. K(\Omega^\Delta)@>>{\alpha}> B(\Omega_{\rm wk}^\Delta)
\end{CD}
\]
for some isomorphism $\varphi$, which proves that
$\left(B(\Omega_{\rm wk}^\Delta), \alpha\circ\beta \right)$ is an injective envelope for $A$.
\end{proof}

\subsection{The second order local multiplier algebra}

\begin{theorem}\label{mloc-2} $\mloctwo(A)=\mloctwoplusk(A)=I(A)$ for all $k\in\mathbb N$.
\end{theorem}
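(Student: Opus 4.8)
The plan is to combine the identification $I(A)=I(\mloc(A))$ with the earlier structural results to show that $\mloc(A)$ already has an injective second-order local multiplier algebra, and in fact that $\mloctwo(A)$ coincides with $I(A)$. First I would recall from the proof of Theorem \ref{inj env} the chain of embeddings $A\subset K(\Omega^\Delta)=A^\Delta\subset\mloc(A)\subset I(A)$, together with the fact that $\left(B(\Omega_{\rm wk}^\Delta),\alpha\right)$ is an injective envelope of $A^\Delta=K(\Omega^\Delta)$ and that $I(A)\cong B(\Omega_{\rm wk}^\Delta)$ under an isomorphism respecting these inclusions. The point is that $B(\Omega_{\rm wk}^\Delta)$ is an AW$^*$-algebra (indeed a type~I AW$^*$-algebra, hence injective, by the Corollary following Theorem \ref{wchb}).

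\smallskip

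Next I would invoke the cited criterion from \cite[Theorem 2.3.8]{Ara--Mathieu-book}: if $\mlock{}^{[k_0]}(B)$ is an AW$^*$-algebra for some $k_0$, then the sequence $\{\mlock{}^{[k]}(B)\}_k$ is eventually constant and equal to $\mlock{}^{[k_0]}(B)$ for all $k\ge k_0$. To apply this with $B=A$ and $k_0=1$, it suffices to show that $\mloc(A)$ is itself an AW$^*$-algebra --- but this is \emph{not} expected to be true in general (that would collapse the whole phenomenon the paper is about, namely that $\mloc(A)\ne\mloctwo(A)$). So instead the right move is to show that $\mloctwo(A)$ is an AW$^*$-algebra and that it equals $I(A)$, from which $\mlock{}^{[2+k]}(A)=\mloctwo(A)$ follows for all $k$ by the same criterion applied with $k_0=2$. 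Concretely: since $\mloc(A)$ sits between $K(\Omega^\Delta)$ and $I(A)=B(\Omega_{\rm wk}^\Delta)$, and since $K(\Omega^\Delta)$ is an essential ideal of $B(\Omega_{\rm wk}^\Delta)$ (it is the algebra of compact adjointable operators on the Kaplansky--Hilbert module, whose multiplier algebra is exactly $B(\Omega_{\rm wk}^\Delta)$), the idealizer of $K(\Omega^\Delta)$ in $I(A)$ is all of $B(\Omega_{\rm wk}^\Delta)=I(A)$. Because $K(\Omega^\Delta)$ is an essential ideal of $\mloc(A)$ as well (it contains $A$, which is essential in everything in the chain), $K(\Omega^\Delta)$ is an essential ideal of $\mloc(A)$, so $M(K(\Omega^\Delta))=B(\Omega_{\rm wk}^\Delta)$ is one of the multiplier algebras appearing in the directed system defining $\mloctwo(A)=\mloc(\mloc(A))$, hence $B(\Omega_{\rm wk}^\Delta)\subset\mloctwo(A)$. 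On the other hand $\mloctwo(A)\subset I(A)=B(\Omega_{\rm wk}^\Delta)$ by \eqref{fp inclusion}. Therefore $\mloctwo(A)=B(\Omega_{\rm wk}^\Delta)=I(A)$, which is an AW$^*$-algebra; applying \cite[Theorem 2.3.8]{Ara--Mathieu-book} with $k_0=2$ gives $\mloctwoplusk(A)=\mloctwo(A)$ for all $k\in\NN$.

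\smallskip

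The one delicate point --- the main obstacle --- is verifying carefully that $K(\Omega^\Delta)$ is an \emph{essential} ideal of $\mloc(A)$ and that its multiplier algebra, computed \emph{inside} $\mloc(A)$'s local multiplier system, really is (a copy of) $B(\Omega_{\rm wk}^\Delta)$ rather than some smaller algebra. For this I would argue as follows: $K(\Omega^\Delta)$ is an essential ideal of $A^\Delta=K(\Omega^\Delta)$ trivially, and more to the point $K(E)$ is an essential ideal of $B(E)$ with $M(K(E))=B(E)$ for any Hilbert module $E$ (stated in the Preliminaries for $C^*$-modules; the Kaplansky--Hilbert case is in \cite{kaplansky1953, argerami--farenick--massey2010}). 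Since $A\subset K(\Omega^\Delta)$ and $A$ is essential in $\mloc(A)$ (it is an essential ideal of $M(A)$ and of every $M(I)$), any nonzero ideal of $\mloc(A)$ meets $A$, hence meets $K(\Omega^\Delta)$; so $K(\Omega^\Delta)$ is essential in $\mloc(A)$. Consequently $K(\Omega^\Delta)\in\mathcal I_{\rm ess}(\mloc(A))$ and the canonical map $M(K(\Omega^\Delta))\to\mloc(\mloc(A))=\mloctwo(A)$ is an embedding; its image, being $M(K(\Omega^\Delta))=B(\Omega_{\rm wk}^\Delta)=I(A)$, is forced to be all of $\mloctwo(A)$ by the sandwich $B(\Omega_{\rm wk}^\Delta)\subset\mloctwo(A)\subset I(A)=B(\Omega_{\rm wk}^\Delta)$. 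Once this is in place the rest is bookkeeping with the inclusion \eqref{fp inclusion} and the AW$^*$-stabilization theorem.
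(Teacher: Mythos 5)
There is a genuine gap, and it sits exactly at the point you flagged as ``the one delicate point.'' Your argument conflates the Hilbert C$^*$-module $\Omega^\Delta$ (continuous vector fields) with the Kaplansky--Hilbert module $\Omega_{\rm wk}^\Delta$ (weakly continuous vector fields). The algebra of compact adjointable operators on the Kaplansky--Hilbert module is $K(\Omega_{\rm wk}^\Delta)$, not $K(\Omega^\Delta)=A^\Delta$, and the multiplier algebra of $K(\Omega^\Delta)$ is $B(\Omega^\Delta)$, not $B(\Omega_{\rm wk}^\Delta)$; in general $B(\Omega^\Delta)\subsetneq B(\Omega_{\rm wk}^\Delta)$. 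If one had $M(K(\Omega^\Delta))=I(A)$, then $\mloc(A^\Delta)$ would already be injective and the phenomenon $\mloc(A)\neq\mloctwo(A)$ established in \cite{argerami--farenick--massey2009} (for precisely these algebras) would disappear---the same kind of collapse you correctly warned yourself against in the first half of the proposal. Moreover, $K(\Omega^\Delta)$ is not an ideal of $I(A)=B(\Omega_{\rm wk}^\Delta)$ (for $x\in B(\Omega_{\rm wk}^\Delta)$ and $\omega\in\Omega^\Delta$ one has $x\Theta_{\omega,\omega}=\Theta_{x\omega,\omega}$ with $x\omega$ only weakly continuous), nor of $\mloc(A)$; your essentiality argument (``any nonzero ideal of $\mloc(A)$ meets $A$, hence meets $K(\Omega^\Delta)$'') presupposes that $K(\Omega^\Delta)$ is an ideal of $\mloc(A)$, which is never established and is false in general---it is merely an intermediate C$^*$-subalgebra. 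Consequently the sandwich $B(\Omega_{\rm wk}^\Delta)\subset\mloctwo(A)\subset I(A)$ is not obtained: at best you would get $M(K(\Omega^\Delta))=B(\Omega^\Delta)$ inside $\mloctwo(A)$, which does not close the gap.

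The missing idea is the one the paper's proof supplies: one must work with $K(\Omega_{\rm wk}^\Delta)$, which genuinely is an essential ideal of $I(A)$ with $M(K(\Omega_{\rm wk}^\Delta))=B(\Omega_{\rm wk}^\Delta)$, and the substantive step is to prove that $K(\Omega_{\rm wk}^\Delta)\subset\mloc(A)$. Since this ideal is generated by the abelian projections $e=\Theta_{\nu,\nu}$ with $\nu\in\Omega_{\rm wk}^\Delta$ and $\langle\nu,\nu\rangle$ a projection, the paper approximates each such $e$ in norm by elements of $\mloc(A)$: it writes $\nu\approx\xi=\sum_i c_i\cdot\omega_i$ with $\{c_i\}$ orthogonal central projections and $\omega_i\in\Omega^\Delta$ (\cite[Proposition 4.4]{argerami--farenick--massey2010}), and then uses the decomposition $\mloc(A)=\prod_i c_i\,\mloc(A)$ to assemble $(c_i\cdot\Theta_{\omega_i,\omega_i})_i$ into an element of $\mloc(A)$ within $\varepsilon(1+\varepsilon)$ of $e$. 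Once $K(\Omega_{\rm wk}^\Delta)\subset\mloc(A)$ is known, it is an essential ideal of $\mloc(A)$ (being an essential ideal of $I(A)$ contained in $\mloc(A)$), so $\mloctwo(A)=\mloc(K(\Omega_{\rm wk}^\Delta))\supseteq M(K(\Omega_{\rm wk}^\Delta))=B(\Omega_{\rm wk}^\Delta)=I(A)\supseteq\mloctwo(A)$; your concluding bookkeeping with \eqref{fp inclusion} and the AW$^*$-stabilisation criterion for the higher orders $\mloctwoplusk(A)$ is fine, but it rests on this approximation step, which your proposal omits entirely.
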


\begin{proof} The injective algebra $I(A)=B(\Omega_{\rm wk}^\Delta)$ is a type I AW$^*$-algebra
and the ideal generated by the abelian projections of $I(A)$ is $K(\Omega_{\rm wk}^\Delta)$ \cite[Proposition 3.8]{argerami--farenick--massey2010}.
We will prove below that $e\in\mloc(A)$, for every abelian projection $e\in I(A)$. Assuming this statement holds, we therefore conclude that
$K(\Omega_{\rm wk}^\Delta)\subset\mloc(A)$. But $K(\Omega_{\rm wk}^\Delta)$ is an essential ideal of $I(A)$, and hence it  is also an essential ideal
of $\mloc (A)$. Therefore, $K(\Omega_{\rm wk}^\Delta)$ and $\mloc(A)$ have the same local multiplier algebras, which yields
$\mloctwo(A) =I(A)$ because of
\[
B(\Omega_{\rm wk}^\Delta)\;\supseteq\;\mloctwo(A)\;=\;\mloc(K(\Omega_{\rm wk}^\Delta))\;\supseteq\; M(K(\Omega_{\rm wk}^\Delta))
\;=\;B(\Omega_{\rm wk}^\Delta)\,.
\]
Hence, $\mloctwo(A)=\mloctwoplusk(A)=I(A)$, for all $k\in\mathbb N$.

Therefore, to complete the proof assume that $e\in I(A)$ and $\varepsilon>0$. Recall
that $e=\Theta_{\nu,\nu}$ for some $\nu\in \Omega_{\rm wk}^\Delta$ for which $\langle\nu,\nu\rangle$ is a projection
in $C(\Delta)$ \cite[Lemma 13]{kaplansky1953}. Because $\nu\in \Omega_{\rm wk}^\Delta$, there are a family $\{c_i\}_i$
of pairwise orthogonal projections in $C(\Delta)$ with supremum $1\in C(\Delta)$ and a bounded family $\{\omega_i\}_i\subset \Omega^\Delta$
such that $\|\nu-\xi\|<\varepsilon$ \cite[Proposition 4.4]{argerami--farenick--massey2010}, where
$\xi=\sum_ic_i\cdot\omega_i$ is in the sense of \eqref{aw sum} and $\|\xi\|<1+\varepsilon$.

By \eqref{incl-1}, $K(\Omega^\Delta)\subset\mloc(A)\subset B(\Omega_{\rm wk}^\Delta)$. Therefore, the centre of
$B(\Omega_{\rm wk}^\Delta)$, namely $\{f\cdot 1\,:\,f\in C(\Delta)\}$,  is contained in the centre of $\mloc(A)$ and
$\Theta_{\omega_i,\omega_i}\in\mloc(A)$ for all $i$. Thus, by
\cite[Lemma 3.3.6]{Ara--Mathieu-book} (see also \cite[Lemma 2.3 ]{somerset1996}),
\[
\mloc(A)\;=\;\displaystyle\prod_{i} c_i\mloc(A)\,,
\]
and under this isomorphism, $\left(c_i\cdot \Theta_{\omega_i,\omega_i}\right)_i$ determines a hermitian element $x\in\mloc(A)$.
Hence,
\[
\mloc(A) \;=\;\displaystyle\prod_{i} c_i\mloc(A)\;\subset\;\displaystyle\prod_{i} c_i I(A)\;=\; I(A)\,,
\]
where the last equality is a fact about AW$^*$-algebras \cite[Lemma 2.7]{kaplansky1951}. As $e\in I(A)$ is identified with $(c_i\cdot e)_i$
under this isomorphism, we obtain
\[
\|e-x\|\;=\;\sup_i\|c_i\left(\Theta_{\nu,\nu}-\Theta_{\omega_i,\omega_i}\right)\|
\;\leq\;\|\nu-\xi\|\left(\|\nu\|+\|\xi\|\right)\;<\;\varepsilon(1+\varepsilon)\,.
\]
Since $\varepsilon>0$ is arbitrary, $e$ is a limit of elements $x\in\mloc(A)$.
 \end{proof}

 Theorem \ref{mloc-2} demonstrates that the injectivity of $\mloctwo(A)$, which was proved to hold for separable type I C$^*$-algebras
 \cite[Theorem 2.7]{somerset2000} (see \cite[Theorem 3.2]{ara-mathieu2010} also), can hold as well for certain nonseparable type I C$^*$-algebras.
 In particular, the following special case of Theorem \ref{mloc-2} is new at this level of generality.

 \begin{corollary} \label{C_0(T)} For every  locally compact Hausdorff space $T$,  $\mloctwo(C_0(T)\otimes\mathbb K)$ is injective and therefore
 $\mloctwo(C_0(T)\otimes\mathbb K)=\mloctwoplusk(C_0(T)\otimes\mathbb K)$, for all $k\in\mathbb N$.
 \end{corollary}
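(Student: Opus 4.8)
The goal is to derive Corollary \ref{C_0(T)} as a special case of Theorem \ref{mloc-2}, so the plan is essentially to check that the hypotheses of Theorem \ref{mloc-2} are met by $A = C_0(T)\otimes\mathbb K$. The first step is to recognise $C_0(T)\otimes\mathbb K$ as a spatial continuous trace C$^*$-algebra of the type studied in Section \ref{S:prelim}. Indeed, take $H_t = H$ for a fixed separable infinite-dimensional Hilbert space $H$ for every $t\in T$, and let $\Omega$ be the set of all continuous vector fields $\omega\colon T\to H$ (with $H$ carrying its norm topology); then $(T,\{H_t\}_{t\in T},\Omega)$ is a (trivial) continuous Hilbert bundle, and the associated Fell algebra $A(T,\{H_t\}_{t\in T},\Omega)$ is exactly $C_0(T,\mathbb K) \cong C_0(T)\otimes\mathbb K$. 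This identification is standard and follows from Lemma \ref{C=A}, since for the trivial bundle $\cF(\Omega_0)$ is (dense in) $C_0(T)\otimes\mathbb K$ and local uniform limits of such sections with norm vanishing at infinity are precisely the elements of $C_0(T,\mathbb K)$.

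Once $A = C_0(T)\otimes\mathbb K$ is placed in the framework of a spatial continuous trace C$^*$-algebra coming from a continuous Hilbert bundle over the arbitrary locally compact Hausdorff space $T$, Theorem \ref{mloc-2} applies verbatim and yields $\mloctwo(A) = \mloctwoplusk(A) = I(A)$ for all $k\in\mathbb N$. Since $I(A)$ is an injective C$^*$-algebra by definition, this gives that $\mloctwo(C_0(T)\otimes\mathbb K)$ is injective, and the equality $\mloctwo(C_0(T)\otimes\mathbb K) = \mloctwoplusk(C_0(T)\otimes\mathbb K)$ for all $k\in\mathbb N$ is then immediate from the conclusion of Theorem \ref{mloc-2}. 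So the proof is really just a one-line invocation once the model is set up.

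The only point that requires a small amount of care — and hence the main (though modest) obstacle — is verifying that $C_0(T)\otimes\mathbb K$ genuinely arises from a \emph{continuous} Hilbert bundle in the precise sense of axioms (I)--(IV) of Section \ref{S:prelim}, and that the resulting Fell algebra is $C_0(T)\otimes\mathbb K$ on the nose rather than merely Morita equivalent to $C_0(T)$. For the trivial bundle with constant fibre $H$, axioms (I), (III), and (IV) are clear, and axiom (II) holds because constant vector fields already exhaust $H$ fibrewise; the identification of the Fell algebra with $C_0(T,\mathbb K)$ then follows from the description of $A$ given in Lemma \ref{C=A}, noting that $\cF(\Omega_0)$ consists exactly of the finite-rank-valued norm-continuous sections vanishing at infinity, whose closure under local uniform approximation (with the vanishing-at-infinity constraint) is $C_0(T,\mathbb K)$. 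With this identification in hand, everything else is a direct appeal to Theorem \ref{mloc-2}.

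\begin{proof}
Fix a separable infinite-dimensional Hilbert space $H$ and let $H_t = H$ for every $t\in T$. Let $\Omega$ be the collection of all norm-continuous vector fields $\omega\colon T\to H$. Then $(T,\{H_t\}_{t\in T},\Omega)$ is a continuous Hilbert bundle: axioms (I), (III), and (IV) are routine, and axiom (II) holds because constant vector fields already give all of $H$ in each fibre. By Lemma \ref{C=A}, the associated Fell algebra $A = A(T,\{H_t\}_{t\in T},\Omega)$ coincides with $C_0\bigl(T,\{K(H_t)\}_{t\in T},\cF(\Omega_0)\bigr)$; since $\cF(\Omega_0)$ consists of the finite-rank-operator-valued norm-continuous sections that vanish at infinity, its closure under local uniform approximation subject to the vanishing-at-infinity condition is exactly $C_0(T,\mathbb K)$. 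Hence $A \cong C_0(T)\otimes\mathbb K$.

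Now apply Theorem \ref{mloc-2} to this $A$: we obtain $\mloctwo(A) = \mloctwoplusk(A) = I(A)$ for all $k\in\mathbb N$. In particular $\mloctwo\bigl(C_0(T)\otimes\mathbb K\bigr) = I\bigl(C_0(T)\otimes\mathbb K\bigr)$ is injective, and the asserted equality $\mloctwo\bigl(C_0(T)\otimes\mathbb K\bigr) = \mloctwoplusk\bigl(C_0(T)\otimes\mathbb K\bigr)$ for all $k\in\mathbb N$ follows as well.
\end{proof}
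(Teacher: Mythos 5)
Your proof is correct and follows essentially the same route as the paper, which obtains the corollary by viewing $C_0(T)\otimes\mathbb K$ as the Fell algebra of the trivial continuous Hilbert bundle with constant fibre and invoking Theorem \ref{mloc-2}. Your explicit verification of the bundle axioms and of the identification $A\cong C_0(T,\mathbb K)$ via Lemma \ref{C=A} simply spells out what the paper leaves implicit.
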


\subsection{A refinement of the chain of inclusions}
 
\begin{theorem}\label{mloc to ie1}
There exist monomorphisms through which the following inclusions as are as C$^*$-subalgebras:
\[
A\subset K(\Omega^\Delta)\subset \mloc(A)\subset \mloc(K(\Omega^\Delta))\subset \mloctwo(A)=\mloctwo(K(\Omega^\Delta))\,.
\]
\end{theorem}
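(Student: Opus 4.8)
The plan is to assemble the chain from results already in hand, since each individual inclusion is either known or an immediate consequence of the structure theory developed above. First I would establish $A\subset K(\Omega^\Delta)$: this is precisely the monomorphism $\beta$ from Theorem~\ref{a-komega-mloc inclusion}, which realises $A$ as a C$^*$-subalgebra of $K(\Omega^\Delta)=A^\Delta$ (using Proposition~\ref{they are equal}). Next, $K(\Omega^\Delta)\subset\mloc(A)$ is the monomorphism $\gamma$ of Theorem~\ref{k-omega}, and by Theorem~\ref{a-komega-mloc inclusion} the composite $\gamma\circ\beta$ is the canonical embedding of $A$ into $\mloc(A)$, so these two inclusions are compatible. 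Since $A$ is an essential ideal of $\mloc(A)$ (indeed of $I(A)$) and $A\subset K(\Omega^\Delta)\subset\mloc(A)$, the subalgebra $K(\Omega^\Delta)$ contains an essential ideal of $\mloc(A)$ and hence is itself essential in $\mloc(A)$; this is the key observation already used in the proof of Theorem~\ref{inj env}.

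The next step is the inclusion $\mloc(A)\subset\mloc(K(\Omega^\Delta))$. Here I would invoke the general principle that if $B$ is an essential ideal of a C$^*$-algebra $C$, then $\mloc(B)=\mloc(C)$ functorially, so in particular there are canonical compatible embeddings into the common local multiplier algebra; applied with $B=A$ (or $B=K(\Omega^\Delta)$) sitting essentially inside $C=\mloc(A)$, this gives $\mloc(\mloc(A))=\mloc(A)$-module-theoretic identifications and, more to the point, $\mloc(K(\Omega^\Delta))=\mloc(A^\Delta)$. Concretely: $A$ is essential in $K(\Omega^\Delta)$, so $\mloc(A)=\mloc(K(\Omega^\Delta))$ as C$^*$-algebras, with the inclusion $\mloc(A)\hookrightarrow\mloc(K(\Omega^\Delta))$ realised (after this identification) as part of the canonical chain. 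Then $\mloc(K(\Omega^\Delta))\subset\mloc^{[2]}(K(\Omega^\Delta))=\mloc^{[2]}(A)$ is the first-stage inclusion \eqref{fp inclusion} applied to $K(\Omega^\Delta)$, together with the equality $\mloc^{[2]}(K(\Omega^\Delta))=\mloc^{[2]}(A)$ which follows by iterating the essential-ideal invariance of $\mloc$, or more directly from Theorem~\ref{mloc-2}, which already asserts $\mloctwo(A)=I(A)=B(\Omega_{\rm wk}^\Delta)$ and, by the same argument applied to $K(\Omega^\Delta)$ (whose injective envelope is also $B(\Omega_{\rm wk}^\Delta)$ by Theorem~\ref{ie stonean}), $\mloctwo(K(\Omega^\Delta))=B(\Omega_{\rm wk}^\Delta)$ as well.

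Finally I would verify that all these embeddings are mutually compatible, i.e.\ that the diagram of canonical inclusions commutes, so that one genuinely has a single nested chain of C$^*$-subalgebras inside $B(\Omega_{\rm wk}^\Delta)=I(A)$ rather than an unrelated collection of maps. This follows because every inclusion in sight is the \emph{canonical} one determined by an essential-ideal embedding, and by Hamana's uniqueness the composite from $A$ all the way to $I(A)$ agrees with $\alpha\circ\beta$ of Theorem~\ref{inj env}; each intermediate algebra ($K(\Omega^\Delta)$, $\mloc(A)$, $\mloc(K(\Omega^\Delta))$) then sits as an intermediate C$^*$-subalgebra between $A$ and $I(A)$, and the $\mloc$-functor respects these inclusions by the universal property of multiplier algebras (as used repeatedly in Theorems~\ref{k-omega}--\ref{inj env}).

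The main obstacle I anticipate is not any deep new argument but the bookkeeping of compatibility: one must be careful that the embedding $\mloc(A)\subset\mloc(K(\Omega^\Delta))$ coming from ``$A$ essential in $K(\Omega^\Delta)$'' is the \emph{same} map (up to the canonical identifications) as the first-stage inclusion $\mloc(A)\subset\mloc^{[2]}(A)$ restricted appropriately, and that the two descriptions of $\mloctwo$ — as $\mloc(\mloc(A))$ and as $\mloc(K(\Omega^\Delta))$'s local multiplier algebra — coincide with the identification used in Theorem~\ref{mloc-2}. Once the essential-ideal invariance $\mloc(B)=\mloc(C)$ (for $B$ essential in $C$) is invoked in the functorial form — which is standard, e.g.\ \cite[\S2.3]{Ara--Mathieu-book} — all the equalities and inclusions drop out, and the chain is complete.
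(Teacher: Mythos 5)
Your reduction of the whole chain to ``essential-ideal invariance'' of $\mloc$ breaks down at the one step that actually needs proof. The principle you invoke (if $B$ is an essential \emph{ideal} of $C$ then $\mloc(B)=\mloc(C)$) is correct, but its hypothesis fails here: $\beta(A)$ is merely a C$^*$-subalgebra of $K(\Omega^\Delta)=A^\Delta$, not an ideal of it. Already in the simplest abelian example ($T=[0,1]$ with one-dimensional fibres) one has $A=C[0,1]$ sitting inside $K(\Omega^\Delta)=C(\Delta)=\mloc(C_0(T))$ as a unital proper subalgebra, which is not an ideal; for the same reason neither $A$ nor $K(\Omega^\Delta)$ is an ``essential ideal of $\mloc(A)$'' as your argument asserts. (What the paper uses in Theorem \ref{inj env} is rigidity of any intermediate inclusion $A\subset B\subset I(A)$, which is a different and much weaker statement than ideal containment.) Consequently your derivation of $\mloc(A)\subset\mloc(K(\Omega^\Delta))$ --- and a fortiori your stronger claim $\mloc(A)=\mloc(K(\Omega^\Delta))$, which the theorem does not assert and which does not follow from anything you cite --- is unjustified; yet this inclusion is exactly the content of the theorem beyond what Theorems \ref{a-komega-mloc inclusion} and \ref{mloc-2} already give.

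The paper fills this gap by an explicit construction rather than a functorial identity: each essential ideal $I$ of $A$ determines the dense open set $Y^I=\Phi_I^{-1}(X^I)\subset\Delta$, hence an essential ideal $\mathfrak h(I)$ of $A^\Delta$, in an order-preserving way; a multiplier $x\in M(I)$, viewed via Proposition \ref{mult alg} as a bounded strictly continuous operator field on $X^I$, is pulled back to $\tilde x=x\circ\Phi_I{}_{\vert Y^I}$, and the technical heart is the verification that $\tilde x$ is strictly continuous with respect to $\cF((\Omega^\Delta{}_{\vert Y^I})_{0})$ --- this uses Lemma \ref{*-str cont}, a Gram--Schmidt reduction of the approximating finite-rank fields, and the local approximant $c=(px+xp-pxp)\circ\Phi_{IK}$ --- followed by a compatibility check with the connecting maps $\tilde\pi_{JI}$ so that the monomorphisms $\zeta_I:M(I)\rightarrow M(\mathfrak h(I))$ assemble into $\rho:\mloc(A)\rightarrow\mloc(A^\Delta)$. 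Your treatment of the outer parts of the chain (the inclusions $A\subset K(\Omega^\Delta)\subset\mloc(A)$ from Theorems \ref{a-komega-mloc inclusion} and \ref{they are equal}, and $\mloctwo(K(\Omega^\Delta))=\mloctwo(A)=I(A)$ via Theorem \ref{mloc-2} and the Stonean-base results) is consistent with the paper, but without a substitute for the construction of $\rho$ the middle inclusion, and hence the theorem, is not established.
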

\begin{lemma}\label{*-str cont} Let $\nu\in \mathcal E^I$ be such that $\check{\nu}\in C_0(X^I)$. If
$x\in M(I)$ is considered as strictly continuous bounded operator field on $X^I$, then there exists
$\omega\in\Omega^I$ such that $x(t)\nu(t)=\omega(t)$ for every $t\in X^I$.
\end{lemma}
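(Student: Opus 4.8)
The plan is to recover the vector field $t\mapsto x(t)\nu(t)$ in the form $\bigl(x\cdot\Theta_{\nu,\mu}\bigr)(t)\,\mu(t)$ for a suitably chosen auxiliary field $\mu$, exploiting that $x\cdot\Theta_{\nu,\mu}$ lies in the ideal $I$ and hence, through $\delta_I$, in the continuous Hilbert bundle $\Omega^I$. Throughout I use the concrete pictures from Lemma \ref{C=A} and Proposition \ref{mult alg}: $\delta_I(I)=C_0\bigl(X^I,\{K(H^I_t)\}_{t\in X^I},\cF((\Omega^I{}_{\vert X^I})_0)\bigr)$ acts by pointwise operators, and $M(I)$ is realised as the strictly continuous bounded operator fields on $(X^I,\{B(H^I_t)\}_{t\in X^I})$ with multiplication against $\delta_I(I)$ given pointwise. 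Since $\check\nu\in C_0(X^I)$, the restriction $\nu{}_{\vert X^I}$ lies in the Hilbert $C_0(X^I)$-module $(\Omega^I{}_{\vert X^I})_0$, so $\Theta_{\nu,\mu}$ is a genuine rank-one element of $\delta_I(I)$ for every $\mu\in(\Omega^I{}_{\vert X^I})_0$; consequently $x\cdot\Theta_{\nu,\mu}\in\delta_I(I)$, and pointwise $\bigl(x\cdot\Theta_{\nu,\mu}\bigr)(t)=\Theta_{x(t)\nu(t),\,\mu(t)}$ for $t\in X^I$.

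First I would fix $t_0\in X^I$ and build a local ``normalising'' field. Using axiom (II) of the bundle $(\beta X^I,\{H^I_t\}_{t\in\beta X^I},\Omega^I)$ pick $\sigma\in\Omega^I$ with $\sigma(t_0)\neq0$; then, multiplying $\sigma$ by a function of $C(\beta X^I)$ equal to $1/\check\sigma$ near $t_0$ and afterwards by a function of $C_0(X^I)$ equal to $1$ near $t_0$ (Urysohn, using that $X^I$ is open in $\beta X^I$ and that $\Omega^I$ is a $C(\beta X^I)$-module), I obtain $\mu\in\Omega^I$ with $\check\mu\in C_0(X^I)$ and $\|\mu(t)\|=1$ on an open neighbourhood $W\subset X^I$ of $t_0$. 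For such $\mu$, set $y=x\cdot\Theta_{\nu,\mu}\in\delta_I(I)$, extended by $0$ off $X^I$ so that $y\in A^I$. On $W$ one has
\[
y(t)\,\mu(t)\;=\;\Theta_{x(t)\nu(t),\,\mu(t)}\,\mu(t)\;=\;\|\mu(t)\|^{2}\,x(t)\nu(t)\;=\;x(t)\nu(t)\,.
\]

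Next I would show $y\mu\in\Omega^I$. The element $y$ is a local uniform limit of fields of the form $\sum_j\Theta_{\omega_j,\zeta_j}$ with $\omega_j,\zeta_j\in(\Omega^I{}_{\vert X^I})_0$ (each being the restriction to $X^I$ of an element of $\Omega^I$ with $\check{\,\cdot\,}$ vanishing on $\beta X^I\setminus X^I$); and $\bigl(\sum_j\Theta_{\omega_j,\zeta_j}\bigr)\mu=\sum_j\langle\mu,\zeta_j\rangle\cdot\omega_j$ lies in $\Omega^I$, because each $\langle\mu,\zeta_j\rangle$ extends continuously to $\beta X^I$ and $\Omega^I$ is a $C(\beta X^I)$-module. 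From the estimate $\|y\mu-(\sum_j\Theta_{\omega_j,\zeta_j})\mu\|\le\|\mu\|\cdot\|y-\sum_j\Theta_{\omega_j,\zeta_j}\|$ it follows that $y\mu$ is a local uniform limit of elements of $\Omega^I$, hence $y\mu\in\Omega^I$ since $\Omega^I$ is closed under local uniform limits (Proposition \ref{prop: ext chb}). Combining with the displayed identity, the field $t\mapsto x(t)\nu(t)$ agrees with $y\mu\in\Omega^I$ on $W$, so it is a local uniform limit of $\mathcal E^I$ near every point $t_0\in X^I$. To finish, at a point $t_0\in\beta X^I\setminus X^I$ I would use $\|x(t)\nu(t)\|\le\|x\|\,\check\nu(t)$ with $\check\nu\in C_0(X^I)$: the extension $\omega$ of $t\mapsto x(t)\nu(t)$ by $0$ to $\beta X^I$ satisfies $\check\omega\in C(\beta X^I)$ and $\check\omega(t_0)=0$, so near $t_0$ it is uniformly approximated by the zero field of $\mathcal E^I$. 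Hence $\omega\in\Omega^I$ and $\omega(t)=x(t)\nu(t)$ for all $t\in X^I$, as required.

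The step I expect to demand the most care is the gluing: one must arrange the local normalising fields $\mu$ so that $\check\mu\in C_0(X^I)$ (so $\Theta_{\nu,\mu}$ truly lands in $\delta_I(I)$ and not merely in its multiplier algebra), and one must check that the ``extra'' points of $\beta X^I\setminus X^I$ are harmless, which is exactly where the hypothesis $\check\nu\in C_0(X^I)$ is used. The identification of the module product $x\cdot\Theta_{\nu,\mu}$ with the pointwise operator field $t\mapsto x(t)\Theta_{\nu(t),\mu(t)}$ is precisely the content of the Akemann--Pedersen--Tomiyama description recalled in Proposition \ref{mult alg}, and I would invoke it rather than reprove it.
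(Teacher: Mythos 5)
Your proof is correct, and it takes a genuinely different route at the interior points $t_0\in X^I$, while treating the boundary points $t_0\in\beta X^I\setminus X^I$ exactly as the paper does (via $\|x(t)\nu(t)\|\le\|x\|\,\check\nu(t)$ and $\check\nu\in C_0(X^I)$; your intermediate remark that $\check\omega\in C(\beta X^I)$ is not yet justified at that stage, but it is also not needed, since the displayed bound alone gives local approximation by the zero field). The paper handles interior points by a single direct appeal to strict continuity: it fixes $\eta\in\Omega^I$ with $\|\eta(t)\|=1$ near $t_0$, applies the strict-continuity condition of Proposition \ref{mult alg} to $a=\Theta_{\nu,\eta}|_{X^I}$ to obtain $b\in\cF((\Omega^I{}_{\vert X^I})_0)$ with $\|(x(t)-b(t))a(t)\|<\varepsilon$ on a neighbourhood, and then estimates $\|x(t)\nu(t)-(b\nu)(t)\|=\|(x(t)-b(t))a(t)\eta(t)\|<\varepsilon$ with $b\nu\in\Omega^I$, so the candidate field is only \emph{approximated} locally. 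You instead produce an \emph{exact} local representative: $y=x\cdot\Theta_{\nu,\mu}$ is an honest element of $\delta_I(I)\subset A^I$ (your care in arranging $\check\mu\in C_0(X^I)$ is precisely what makes $\Theta_{\nu,\mu}$ land in the ideal rather than merely in its multiplier algebra), and $x\nu=y\mu$ on the set where $\|\mu\|=1$. The price is the auxiliary verification that $A^I\cdot\Omega^I\subset\Omega^I$, which you carry out correctly by approximating $y$ locally by finite-rank fields and using the $C(\beta X^I)$-module structure of $\Omega^I$ (continuity of the inner products $\langle\mu,\zeta_j\rangle$ on $\beta X^I$ follows from polarization and axiom (III)); your argument also leans more heavily on the Akemann--Pedersen--Tomiyama identification of the product $M(I)\times I\rightarrow I$ as pointwise multiplication of operator fields, whereas the paper uses only the strict-continuity characterisation it quotes. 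In short, the paper's argument is shorter, with one $\varepsilon$-estimate; yours trades that estimate for an exact factorisation through the ideal plus the module fact, and both conclude by the closure of $\Omega^I$ under local uniform approximation.
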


\begin{proof} Let $\omega:\beta X^I\rightarrow\sqcup_{t\in\beta X^I}H_t^I$ be defined by $w(t)=0$ for $t\in\beta X^I\setminus X^I$
and $\omega(t)=x(t)\nu(t)$ for $t\in X^I$. We will show that for every $t_0\in\beta X^I$ and $\varepsilon>0$ there are an open
set $U\subset\beta X^I$ containing $t_0$ and a $\mu\in\Omega^I$ such that $\|\omega(t)-\mu(t)\|<\varepsilon$ for all $t\in U$.
Because $\Omega^I$ is closed under local uniform approximation, this will imply that $\omega\in\Omega^I$, thereby completing the
proof.

Assume $t_0\in \beta X^I$ and let $\varepsilon>0$. Notice that $\|x\|=\sup_{t\in X^I}\|x(t)\|<\infty$ and  $\|\omega(t)\|\leq \|x\| \,\check{\nu}(t)$ for $t\in X^I$. Thus, if $t_0\in \beta X^I\setminus X^I$ there exists an open set $t_0\in U\subset \beta X^I$ such that $\|\omega(t)\|\leq \epsilon$ for $t\in U$, since $\check{\nu}\in C_0(X^I)$.

Assume now that $t_0\in X^I$.  Choose a bounded vector field $\eta\in\Omega^I$ such that there exists an open set
$t_0\in W\subset X^I$ such that $\|\eta(t)\|=1$ for all $t\in W$.

Let $a=\Theta_{\nu,\eta}|_{X^I}$, which is
an element of $\mathcal F((\Omega^I_{|X^I})_0)$. Because $x$ is strictly continuous with respect to $\mathcal F((\Omega^I_{|X^I})_0)$,
there are an open set $U\subset W$ containing $t_0$ and $b\in\mathcal F((\Omega^I_{|X^I})_0)$ such that
$\|\left(x(t)-b(t)\right)a(t)\|<\varepsilon$ for all $t\in U$. Note that we have
\[
a(t)\eta(t)\,=\,\langle \eta(t),\eta(t)\rangle\,\nu(t)\,=\,\nu(t)\,,\;\forall\,t\in U\,.
\]
Let $\mu=b\nu\in\Omega^I$.
Hence, for any $t\in U$,
\[
\|\omega(t)-\mu(t)\|\,=\,
\|x(t)\nu(t)-b(t)\nu(t)\| \,=\,
\|\left(x(t)-b(t)\right)a(t)\eta(t)\|\;<\;\varepsilon\,.
\]
\end{proof}
\begin{proof}[Proof of Theorem \ref{mloc to ie1}] By Theorems \ref{a-komega-mloc inclusion} and \ref{mloc-2} we are left to show that there is a
monomorphism $\rho:\mloc(A)\rightarrow \mloc(A^\Delta)$, since $A^\Delta=K(\Omega^\Delta)$ by Proposition \ref{they are equal}.

To that end, let $I\in \mathcal I_{\rm ess}(A)$ and consider
the set $Y^I=\Phi_I^{-1}(X^I)\subset\Delta$ which is open and dense \cite[Lemma 1.1]{argerami--farenick--massey2009}.
Because $\Phi_I:\Delta\rightarrow \beta X^I$ is a (continuous) surjection $\Phi_I$ must map $Y^I$ onto $X^I$.
The  open dense set $Y^I$ determines an essential ideal of $A^\Delta$ that we denote by $\mathfrak h(I)$.
Thus, $\mathfrak h:  \mathcal I_{\rm ess}(A)\rightarrow  \mathcal I_{\rm ess}(A^\Delta)$ is a well defined function.
Note that if $K\in \mathcal I_{\rm ess}(A)$ is such that $K \preccurlyeq I$, then
\eqref{used a lot} states that
$X^I\subset\Phi_{KI}^{-1}(X^K)$
(because $\Phi_{KI}$ maps $\beta X^I\setminus X^I$ into $\beta X^K\setminus X^K$  \cite[Theorem 6.12]{Gillman-Jerison-book}). Thus,
\begin{equation}\label{incl inv-sets}
Y^K\;=\;\Phi_K^{-1}(X^K)\;=\;\Phi_I^{-1}\left( \Phi_{KI}^{-1}(X^K) \right) \;\supseteq\;Y^I\,,
\end{equation}
and so $\mathfrak h$ preserves order; i.e.,
$K \preccurlyeq I \;\Rightarrow\; \mathfrak h(K) \preccurlyeq \mathfrak h(I)\,.$

 Fix $I\in \mathcal I_{\rm ess}(A)$ and let $x\in M(I)$. Thus, by Proposition \ref{mult alg},
$x$ is a bounded cross
section of $(X^I,\{B(H^I_t)\}_{t\in X^I})$ which is strictly continuous with respect to $\cF((\Omega^I{}_{\vert X^I})_0)$.
Consider the bounded section $\tilde x=x\circ\Phi_I{}_{\vert Y^I}$ of the fibred space
$(Y^I,\{B(H^\Delta_s)\}_{s\in Y^I})$. We aim to show that $\tilde x$ is strictly continuous with respect to
$\cF((\Omega^\Delta{}_{\vert Y^I})_{0})$, as this is sufficient (and necessary) for $\tilde x\in M(\mathfrak h(I))$ by Proposition \ref{mult alg}. To this end, let $s_0\in Y^I$, $\varepsilon>0$, and $a\in \cF((\Omega^\Delta{}_{\vert Y^I})_{0})$. Recall that
$\Delta=\displaystyle\lim_{\leftarrow}\beta X^K$ and, by Theorem \ref{omega limit},  $\Omega^\Delta\;=\;\displaystyle\lim_{\rightarrow}\,\Omega^K$. Thus, without loss of generality we can assume that
 there are an essential ideal $K\subset A$ with $K\subset I$, an open set $U\subset \beta X^K$ with $s_0\in\Phi_K^{-1}(U)\subset Y^I$ and $\omega_j,\eta_j\in  \mathcal E^K$ such that $$a(s)=\displaystyle\sum_{j=1}^n\Theta_{\omega_j,\eta_j}\circ\Phi_K(s) \ \text{ for }\ s\in \Phi_K^{-1}(U)\,,$$ since
 $\Omega^K$ consists of all vector fields $\nu:\beta X^K \rightarrow \bigsqcup_{t\in \beta X^K}\,H_t^K$ that are local uniform limits of $\mathcal E^K$. Again, since the strict continuity is a local property, we can further assume that $\check{\omega_j},\,\check{\eta_j}\in C_0(U)$.

Within the open subset $U\subset\beta X^K$, apply the Gram--Schmidt orthogonalisation procedure \cite[Lemma 4.2]{fell1961}
to the vector fields $\omega_j,\eta_j\in \mathcal E^K$ to obtain vector fields $\nu_1,\dots,\nu_N\in\mathcal E^K$
that are pairwise orthogonal in an open set $t_0=\Phi_K(s_0)\in U_0\subset U$ and are such that each $\omega_j(t)$ and $\eta_j(t)$
are in the linear span of $\nu_1(t),\dots,\nu_N(t)$ for every $t\in U_0$, $1\leq j\leq n$. (Notice that $\mathcal E^K$ is a $C_b(T)$-module via the natural monomorphism from $C_b(T)$ into $C(\beta X^K)$; this is all that is needed for the Gram--Schmidt process.)
Relabel so that $U$ now has the property of $U_0$.

Because $I \preccurlyeq K$, the proof of
Proposition \ref{systems1} demonstrates that the map $\omega\mapsto\omega\circ\Phi_{IK}$
is a linear isomorphism  $\mathcal E^K\rightarrow\mathcal E^I$, allowing one to go back and forth between $\mathcal E^K$ and $\mathcal E^I$. Hence, we may further assume that the vector fields
$\omega_j,\eta_j, \nu_\ell\in \mathcal E^K$ are contained in $\mathcal E^I$ and defined on $\beta X^I$ and are such that $\check{\omega_j},\,\check{\eta_j}\in C_0(U)\subset C_0(X^I)$ (since $U\subset X^K\subset X^I$). Now let $p=\sum_{i=1}^N\Theta_{\nu_i,\nu_i}\in A^I$. By Lemma \ref{*-str cont}, each of
\[
px=\sum_{i=1}^N \Theta_{\nu_i,\,x^*\nu_i}\ , \; xp=\sum_{i=1}^N \Theta_{x\nu_i,\, \nu_i}\ , \; \mbox{and } pxp=\sum_{i=1}^N \Theta_{x\nu_i,\,x^*\nu_i}
\]
can naturally be regarded as an element of $A^I$.
Notice that $p(t)$ is the orthogonal projection onto the span of $\{\nu_1(t),\ldots,\nu_N(t)\}$ for every $t\in U$. Let $c=(px+xp-pxp)\circ \Phi_{IK}\in A^K$ and let
$d=\sum_{i=1}^n \Theta_{\omega_i,\eta_i}$ whereby $a=d\circ \Phi_K$.
Hence $d(x-c )(t)=dx(t)-dx(t)=0$ for $t\in U$, since
\[
dc(t)=dpx(t)+dxp(t)-dpxp(t)=dx(t)+dpxp(t)-dpxp(t) \,,
\]
because $d(t)=dp(t)=pd(t)$ for $t\in U$. Similarly $(x-c)d(t)=xd(t)-xd(t)=0$ for $t\in U$.

If we now let $b=c\circ \Phi_K$ then $b\in \cF((\Omega ^\Delta|_{Y^I})_0)$ - since $\check c\in C_0(U)$ - is such that
\[
\|\left( \tilde x(s)-b(s)\right)a(s)\|\,+\,\|a(s)\left( \tilde x(s)-b(s)\right)\|\;<\;\varepsilon\,,\;\mbox{ for all }\;s\in \Phi_K^{-1}(U)\subset Y^I\,.
\]
This proves that $\tilde x\in M(\mathfrak I(I))$.
The map $\zeta_I:M(I)\rightarrow M(\mathfrak h(I))$ given by $\zeta_I(x)=\tilde x$ is evidently a homomorphism.
If $\zeta_I(x)=0$, then $x(t)=0$ for all $t\in \Phi_I(Y^I)=X^I$, and so $x=0$. Therefore, $\zeta_I$ is a monomorphism.
Let $\alpha_{\mathfrak h(I)}:M(\mathfrak h(I))\rightarrow \mloc(A^\Delta)$ be the unique monomorphism that embeds $M(\mathfrak h(I))$
into the local multiplier algebra of $A^\Delta$ and, for $J \preccurlyeq I$, let $\alpha_{\mathfrak h(J)\mathfrak h(I)}:M(\mathfrak h(J))\rightarrow M(\mathfrak h(I))$
be the connecting monomorphisms induced by $\mathfrak h(J) \preccurlyeq \mathfrak h(I)$.
For each  $I\in \mathcal I_{\rm ess}(A)$, let
 $\rho_I:M(I)\rightarrow \mloc(A^\Delta)$ be the monomorphism $\rho_I=\alpha_{\mathfrak h(I)}\circ\zeta_I$.
Because $\tilde\pi_{JI}=\alpha_{\mathfrak h(J)\mathfrak h(I)}\circ\zeta_J$ (where $\tilde\pi_{JI}$ is as in \eqref{compat1}) we conclude that the following diagram
 \[
\begin{CD}
M(J) @>{\tilde\pi_{JI}}>> M(I) \\
@V{\zeta_J}VV           @VV{\zeta_I}V  \\
M(\mathfrak h(J))  @>>{\alpha_{\mathfrak h(J)\mathfrak h(I)}}>  M(\mathfrak h(I)) \\
\end{CD}
\]
is commutative.
Therefore, there exists a monomorphism $\rho:\mloc(A)\rightarrow \mloc(A^\Delta)$ by the universal property of $\mloc(A)$. 
\end{proof}
\section*{Acknowledgement}
This work was undertaken with the support of the
NSERC Discovery Grant Program (Canada) and PIP--CONICET (Argentina). Part of the work of the second
author was undertaken at the Institut de Math\'ematiques de Jussieu, Paris, and he wishes to thank IMJ for its
hospitality and support during his stay.
\bibliographystyle{abbrv}
\bibliography{wchb}

\begin{thebibliography}{10}

\bibitem{akemann--pedersen--tomiyama1973}
C.~A. Akemann, G.~K. Pedersen, and J.~Tomiyama.
\newblock Multipliers of {$C\sp*$}-algebras.
\newblock {\em J. Functional Analysis}, 13:277--301, 1973.

\bibitem{Ara--Mathieu-book}
P.~Ara and M.~Mathieu.
\newblock {\em Local multipliers of {$C\sp *$}-algebras}.
\newblock Springer Monographs in Mathematics. Springer-Verlag London Ltd.,
  London, 2003.

\bibitem{ara-mathieu2006}
P.~Ara and M.~Mathieu.
\newblock A not so simple local multiplier algebra.
\newblock {\em J. Funct. Anal.}, 237(2):721--737, 2006.

\bibitem{ara-mathieu2008}
P.~Ara and M.~Mathieu.
\newblock Maximal {$C^*$}-algebras of quotients and injective envelopes of
  {$C^*$}-algebras.
\newblock {\em Houston J. Math.}, 34(3):827--872, 2008.

\bibitem{ara-mathieu2010}
P.~Ara and M.~Mathieu.
\newblock When is the second local multiplier algebra of a {$C^*$}-algebra
  equal to its first?
\newblock {\em preprint}, (arXiv:1008.4852v1), 28 Aug 2010.

\bibitem{argerami--farenick--massey2009}
M.~Argerami, D.~Farenick, and P.~Massey.
\newblock The gap between local multiplier algebras of {$C^*$}-algebras.
\newblock {\em Q. J. Math.}, 60(3):273--281, 2009.

\bibitem{argerami--farenick--massey2010}
M.~Argerami, D.~Farenick, and P.~Massey.
\newblock Injective envelopes and local multiplier algebras of some spatial
  continuous trace {$C\sp *$}-algebras.
\newblock {\em Q. J. Math.}, to appear.

\bibitem{Blackadar-book}
B.~Blackadar.
\newblock {\em Operator algebras}, volume 122 of {\em Encyclopaedia of
  Mathematical Sciences}.
\newblock Springer-Verlag, Berlin, 2006.
\newblock Theory of $C\sp *$-algebras and von Neumann algebras, Operator
  Algebras and Non-commutative Geometry, III.

\bibitem{blecher2007}
D.~P. Blecher.
\newblock Positivity in operator algebras and operator spaces.
\newblock In {\em Positivity}, Trends Math., pages 27--71. Birkh\"auser, Basel,
  2007.

\bibitem{dixmier--douady1963}
J.~Dixmier and A.~Douady.
\newblock Champs continus d'espaces hilbertiens et de {$C\sp{\ast}
  $}-alg\`ebres.
\newblock {\em Bull. Soc. Math. France}, 91:227--284, 1963.

\bibitem{Dugundji-book}
J.~Dugundji.
\newblock {\em Topology}.
\newblock Allyn and Bacon Inc., Boston, Mass., 1966.

\bibitem{fell1961}
J.~M.~G. Fell.
\newblock The structure of algebras of operator fields.
\newblock {\em Acta Math.}, 106:233--280, 1961.

\bibitem{frank2002}
M.~Frank.
\newblock Injective envelopes and local multiplier algebras of {$C\sp
  *$}-algebras.
\newblock {\em Int. Math. J.}, 1(6):611--620, 2002.

\bibitem{frank--paulsen2003}
M.~Frank and V.~I. Paulsen.
\newblock Injective envelopes of {$C\sp *$}-algebras as operator modules.
\newblock {\em Pacific J. Math.}, 212(1):57--69, 2003.

\bibitem{Gillman-Jerison-book}
L.~Gillman and M.~Jerison.
\newblock {\em Rings of continuous functions}.
\newblock The University Series in Higher Mathematics. D. Van Nostrand Co.,
  Inc., Princeton, N.J.-Toronto-London-New York, 1960.

\bibitem{hamana1979a}
M.~Hamana.
\newblock Injective envelopes of {$C\sp{\ast} $}-algebras.
\newblock {\em J. Math. Soc. Japan}, 31(1):181--197, 1979.

\bibitem{hamana1979b}
M.~Hamana.
\newblock Injective envelopes of operator systems.
\newblock {\em Publ. Res. Inst. Math. Sci.}, 15(3):773--785, 1979.

\bibitem{hamana1981}
M.~Hamana.
\newblock Regular embeddings of {$C\sp{\ast} $}-algebras in monotone complete
  {$C\sp{\ast} $}-algebras.
\newblock {\em J. Math. Soc. Japan}, 33(1):159--183, 1981.

\bibitem{kaplansky1951}
I.~Kaplansky.
\newblock Projections in {B}anach algebras.
\newblock {\em Ann. of Math. (2)}, 53:235--249, 1951.

\bibitem{kaplansky1953}
I.~Kaplansky.
\newblock Modules over operator algebras.
\newblock {\em Amer. J. Math.}, 75:839--858, 1953.

\bibitem{Raeburn--Williams-book}
I.~Raeburn and D.~P. Williams.
\newblock {\em Morita equivalence and continuous-trace {$C\sp *$}-algebras},
  volume~60 of {\em Mathematical Surveys and Monographs}.
\newblock American Mathematical Society, Providence, RI, 1998.

\bibitem{semadeni1968}
Z.~Semadeni.
\newblock Inverse limits of compact spaces and direct limits of spaces of
  continuous functions.
\newblock {\em Studia Math.}, 31:373--382, 1968.

\bibitem{somerset1996}
D.~W.~B. Somerset.
\newblock The local multiplier algebra of a {$C\sp *$}-algebra.
\newblock {\em Quart. J. Math. Oxford Ser. (2)}, 47(185):123--132, 1996.

\bibitem{somerset2000}
D.~W.~B. Somerset.
\newblock The local multiplier algebra of a {$C\sp *$}-algebra. {II}.
\newblock {\em J. Funct. Anal.}, 171(2):308--330, 2000.

\end{thebibliography}
\end{document}